\RequirePackage{hyperref}
\documentclass[11pt, reqno]{amsart}
\usepackage{eqnarray,amsmath,mathtools}
\usepackage{amsthm}

\usepackage{amsfonts, amssymb, stmaryrd}
\usepackage{amsbsy}
\hypersetup{colorlinks,linkcolor={blue!50!black},citecolor={red!50!black},urlcolor={blue!80!black}}
\pdfoutput=1
\usepackage{graphicx} 
\newcommand\rotpi{\rotatebox[origin=c]{180}{$\Pi$}}
\usepackage[all]{xy}

\DeclareMathAlphabet{\mathpzc}{OT1}{pzc}{m}{it}
\usepackage{bbm,dsfont}
\newcommand{\one}{[0]}
\newcommand{\two}{[1]}
\usepackage[paperwidth=8.5in,paperheight=11in,text={6.in,8.5in},centering]{geometry}
\linespread{1.1}


\usepackage{bm}

\usepackage{color} 

\newtheorem{lemma}[equation]{Lemma}
\newtheorem{theorem}[equation]{Theorem}

\newtheorem{proposition}[equation]{Proposition}
\newtheorem{corollary}[equation]{Corollary}
\newtheorem*{corollary*}{Corollary}

\theoremstyle{definition}
\newtheorem{definition}[equation]{Definition}

\newtheorem{remark}[equation]{Remark}
\newtheorem{example}[equation]{Example}
\newtheorem{notation}[equation]{Notation}

\numberwithin{equation}{section}


\newcommand{\fm}{\mathfrak m}

\newcommand{\frs}{\mathfrak s}
\newcommand{\frt}{\mathfrak t}
\newcommand{\fru}{\mathfrak u}

%
%
\newcommand{\cC}{{\mathcal C}}

\newcommand{\cH}{{\mathcal H}}

\newcommand{\cO}{{\mathcal O}}

%
%
\newcommand{\BB}{\mathbb{B}}
\newcommand{\N}{\mathbb{N}}
\newcommand{\R}{\mathbb{R}}

\newcommand{\Z}{\mathbb{Z}}
\newcommand{\D}{\mathbb{D}}
\newcommand{\DD}{\mathbb{D}}

\newcommand{\UU}{\mathbb{U}}

%
%

%
\usepackage{mathrsfs}
\newcommand{\scrP}{\mathscr{P}}
\newcommand{\scrX}{\mathfrak{X}}
\newcommand{\scrI}{\mathscr{I}}
\newcommand{\scrT}{\mathscr{T}}



 \newcommand{\scC}{\mathscr{C}}
 \newcommand{\scD}{\mathscr{D}}

\newcommand{\Crl}{\mathsf{Crl}}
%
%

\newcommand{\Vect}{\mathsf{Vect}}
\newcommand{\RelVect}{\mathsf{RelVect}}
\newcommand{\RelSet}{\mathsf{RelSet}}
\newcommand{\Set} {\mathsf{Set}}

\newcommand{\FinSet}{\mathsf{FinSet}}
\newcommand{\sfC} {\mathsf{C}}
\newcommand{\sfD} {\mathsf{D}}
\newcommand{\Man} {\mathsf{Man}}
\newcommand{\RelMan} {\mathsf{RelMan}}
\newcommand{\HyDS} {\mathsf{HyDS}}

\newcommand{\Cat} {\mathsf{Cat}}
\newcommand{\CAT} {\mathsf{CAT}}

\newcommand{\Graph}{\mathsf{Graph}}
\newcommand{\DS}{\mathsf{DS}}
\newcommand{\OS}{\mathsf{OS}}
\newcommand{\HyOS}{\mathsf{HyOS}}
\newcommand{\SSub}{\mathsf{SSub}}

\newcommand{\HyPh}{\mathsf{HyPh}}
\newcommand{\HySSub}{\mathsf{HySSub}}
\newcommand{\HDS}{\mathsf{HyDS}}
\newcommand{\Free}{\mathsf{Free}}

\newcommand{\on}{\mathrm{on}}
\newcommand{\off}{\mathrm{off}}

\newcommand{\st}{\mathsf{st}}
\newcommand{\tot}{\mathsf{tot}}
\newcommand{\op}{\mathsf{op}}
\newcommand{\inter}{\mathsf{int}}
\newcommand{\pr}{\mathsf{pr}}

\newcommand{\id}{\mathsf{id}}

\newcommand{\sF}[1]{\mathsf{#1}}

\newcommand{\inv}{^{-1}}
\newcommand{\toto}{\rightrightarrows}

\DeclareMathOperator{\colim}{colim}


\usepackage{longtable}


\usepackage{tikz}
\usepackage{tikz-cd}
\usetikzlibrary{arrows,cd,decorations.markings,
  calc,chains,matrix,positioning,scopes,snakes}
\tikzset{
   tick/.style={postaction={
      decorate,
      decoration={markings, mark=at position 0.5 with {\draw[-] (0,.4ex) -- (0,-.4ex);}}}
   }
}

\setcounter{tocdepth}{2}  

\title{Networks   of hybrid open 
  systems}

\author{Eugene Lerman and James Schmidt}
\address{Department of Mathematics, University of Illinois at Urbana-Champaign,
1409 W. Green Street
Urbana, IL 61801, USA}

\begin{document}

\begin{abstract}
We generalize the results of \cite{L2} to the setting of hybrid
systems.  In particular, we introduce the notions of hybrid open
systems, their networks and maps between networks.   
A network of systems is a blueprint for building a larger system out
of smaller subsystems by specifying a pattern of interactions between
subsystems --- an interconnection map.  Maps between networks allow us
to produce maps between complex hybrid dynamical systems by specifying maps
between their subsystems.
\end{abstract}

\maketitle

\tableofcontents

\section{Introduction}
In this paper we generalize the results of \cite{L2} to the setting of hybrid
systems.  In particular, we introduce the notions of hybrid open
systems, their networks and maps between networks. 
A network of systems is a blueprint for building a larger system out
of smaller subsystems by specifying a pattern of interactions between
subsystems --- an interconnection map.  Maps between networks allow us
to produce maps between complex hybrid dynamical systems by specifying maps
between their subsystems.   The framework of \cite{L2} was developed to connect two
rather different views of networks of continuous time systems ---
``networks are morphisms in colored operads'' of Spivak and
collaborators \cite{Spivak, VSL} on one hand and the coupled cell network
formalism of Golubitsky, Stewart and their collaborators (see
\cite{Golubitsky.Stewart.06} and references therein) and its
subsequent generalizations \cite{DL1, DL2}.  By generalizing the
results of \cite{L2} we bring the operadic point of view to hybrid
systems and, at the same time, generalize coupled cell network
formalism to hybrid dynamical systems.

To carry out our program we need to develop an appropriate framework.
The first step is to introduce the notion of a {\em hybrid phase
  space} (Definition~\ref{def:hyph}) and its underlying manifold with
corners.   In this way a hybrid dynamical system is a pair $(a, X)$
where $a$ is a hybrid phase space and $X$ is a vector field on the
underlying manifold $\UU(a)$.   We then define maps of hybrid phase
spaces thereby  making hybrid phase spaces and their maps into a
category $\HyPh$.  We show the category of hybrid phase spaces has
finite products and that the assignment of the underlying manifold to
a hybrid phase space extends to a product-preserving functor
\[
\UU: \HyPh \to \Man
  \]
where $\Man$ denotes the category of manifolds with corners (see
Appendix~\ref{app:A}).

The construction of the category $\HyPh$ and the underling manifold
functor $\UU$ allows us construct the category $\HyDS$ of hybrid
dynamical systems as a ``category of elements'' for a functor with
values in the category $\RelVect$ of vector spaces and linear
relations (see Section~\ref{sec:cat_of_el}).  We provide evidence that
the machine we have built so far makes sense by observing that
executions of hybrid dynamical systems {\em are} maps of hybrid
dynamical systems and therefore are morphisms in the category $\HyDS$
(Definition~\ref{def:exec} and Remark~\ref{rmrk:exec}).  We prove that
maps of hybrid dynamical systems send executions to executions
(Theorem~\ref{thm:exec}) thereby providing another sanity check on our
theory building.

We single out a class of maps between hybrid phase spaces that we call
{\sf hybrid surjective submersions} by
requiring that the corresponding maps of underlying manifolds are
surjective submersions.  We organize hybrid surjective submersions
into a category $\HySSub$ (Definition~\ref{def:hybssub}).  Our
motivation for introducing this category is the following.

The underlying manifold functor $\UU$ extends to a functor
$\UU:\HySSub\to \SSub$ where $\SSub$ is the category of surjective
submersions of manifolds with corners (Notation~\ref{not:ssub}).
Recall that for every surjective submersion
$a= a_\tot \xrightarrow{p_a} a_\st$ ($a_\st$ is the space of states,
$a_\tot$ is the space of states {\em and} controls, and $p_a$ is the
surjective submersion) there corresponds a vector space $\Crl(a)$ of
all control systems on $a$ (Definition~\ref{def:open_sys} and
Notation~\ref{not:crl}).  The assignment $a\mapsto \Crl(a)$ extends to
a $\RelVect$-valued functor $\Crl$.  We therefore have the category of
elements $\OS: = \int \Crl$ of open (control) systems (see
Section~\ref{sec:cat_of_el}).  The category of elements of the
composite functor $\Crl\circ \UU: \HySSub \to \RelVect$ is a category
of hybrid open systems $\HyOS$.  In particular, a {\sf hybrid open
  system} is a pair $(a, F)$ where $a= a_\tot\xrightarrow{p_a} a_\st$
is a hybrid surjective submersion and
$F:\UU(a_\tot) \xrightarrow{\UU(p_a)} T\UU(a_\st)$ is a control
system.  While hybrid open systems are certainly known, we believe
that our definition of the category $\HyOS$ is new.  For other
category-theoretic approaches to hybrid open systems see \cite{Ames},
\cite{TPL} and \cite{LS}.

As was observed in \cite{L2} it is useful to organize surjective
submersions into a double category $\SSub^\Box$ (double categories are
reviewed in Subsection~\ref{subsec:double}).  The second kind of
1-arrows in $\SSub^\Box$ are interconnection morphisms: see
Definition~\ref{def:ssub}, the subsequent remark and
Example~\ref{ex:intercon}.  Roughly speaking these morphisms
describe the effect on open systems of plugging state variables into
controls.  The functor $\Crl:\SSub \to \RelVect$ extends to a functor
of double categories $\Crl:\SSub ^\Box \to \RelVect^\Box$ where
$\RelVect^\Box$ is the double category of vector spaces, linear maps
and linear relations.   We then use the forgetful functor $\UU:\HySSub
\to \SSub$ to turn hybrid surjective submersions into a double
category  $\HySSub ^\Box$ and to define interconnection maps of hybrid
surjective submersions.  As a sanity check we show that a hybrid
dynamical system whose underlying hybrid phase space is a product of
two hybrid phase spaces can be obtained by interconnecting two hybrid
open systems  (see
Example~\ref{ex:hds_on_product}). 

At this point we are almost done with building the machinery.  We
define a network of hybrid open systems to be a pair
$(\{a_x\}_{x\in X}, \psi:b\to \prod _{x\in X} a_x)$ where $\{a_x\}_{x\in X}$
is a collection of hybrid surjective submersions indexed by a finite
set $X$ and $\psi:b\to \prod
_{x\in X}$ is an interconnection map
(Definition~\ref{def:network_hy_os}).   We then define maps of
networks of hybrid open systems
(Definition~\ref{def:map_of_networks_hy_os}), which parallels the
definition of maps of networks of continuous time open systems in
\cite{L2}.   The definition consists of a list of compatible data.  In
more detail a map from a network $(\{a_x\}_{x\in X}, \psi:b\to \prod
_{x\in X} a_x)$ to a network $(\{d_y\}_{y\in Y}, \nu:c\to \prod _{y\in
  Y} d_y)$ consists of a map of finite sets $\varphi:X\to Y$, a
collection $\{\Phi_x: d_{\varphi(x)} \to a_x\}_{x\in X}$ of maps of
hybrid surjective submersions and another map $f:c\to b$ of hybrid
surjective submersions which is compatible with $\varphi$, $\Phi =
\{\Phi_x\}_{x\in X}$, $\psi$ and $\nu$ in an appropriate sense.

The main theorem of this paper (Theorem~\ref{thm:main_result}) can be
roughly phrased as follows.   Recall that
the functor $\Crl:\SSub \to \RelVect$ assigns to a map 
$h:p\to q$ of surjective submersions  a linear relation
$\Crl (h)\subset \Crl (p) \times \Crl(q)$.

Let $\big((\varphi,\Phi),f\big):
  (\{a_x\}_{x\in X}, \psi:b\to \prod _{x\in X} a_x) \to
  (\{d_y\}_{y\in Y}, \nu:c\to \prod _{y\in
  Y} d_y)$ be a map of networks of hybrid open systems.  
Theorem~\ref{thm:main_result} shows that for any choice of
collections of control systems $\{w_x\in
\Crl(\UU(a_x))\}_{x\in X}$ and $\{u_y\in \Crl (\UU(d_y))\}_{y\in Y}$ 
so that $u_{\varphi(x)}$ is
$\UU(\Phi_x)$-related to $w_x$ for all $x\in X$ we get a map
\[
f: (c, \nu^*(\prod u_y)) \to (b,
\psi^* (\prod_{x\in X}w_x))
  \]
of hybrid open systems.  Here $\nu^*: \Crl (\UU(\prod_{y\in Y} d_y)) \to \Crl
(\UU(c))$ and $\psi^*: \Crl (\UU(\prod_{x\in X} a_x)) \to \Crl (\UU
(b))$ are linear maps induced by the interconnection maps $\nu$ and
$\psi$ respectively.   In other words, compatible patterns of
interconnection of hybrid open systems give rise to maps of hybrid open
systems.
  In the case where the two interconnections result in closed
  systems,  the map  $f$ is a map of hybrid dynamical systems.

  Theorem~\ref{thm:main_result}  is a direct generalization of
  \cite[Theorem~9.5]{L2} from continuous time systems to hybrid
  systems.   If one views a network as a directed graph with nodes
  decorated with phase spaces, then it is reasonable to view a map
  between networks as a map of graphs of some sort that preserves the
  decorations.  This is the point of view taken in the papers on
  coupled cell networks cited above.    But one can also view a
  network as a finite list of objects $\{a_1,\ldots, a_k\}$ in some
  monoidal category $\cC$ together with an arrow $a_1\otimes\cdots
  \otimes a_k \to b$ in $\cC$.   In other words, a network is a
  morphism in the colored operad $\cO\cC$ associated with $\cC$.   This
  is a view of networks advocated by Spivak and his collaborators ({\em op.\ cit.}), and
  several other applied category theorists.  If one believes
that networks should form  a category, then the operadic approach to
networks does not quite fit since there are no  arrows between
morphisms in colored operads.  A solution to this issue was proposed
in \cite{L2} in the special case of continuous time systems (there $\cC ^\mathrm{op}$ is
 a subcategory $\SSub_{\textrm{int}}$ of the Cartesian monoidal
 category $\SSub$ of
surjective submersions).  In particular, maps between networks of
continuous time systems were shown to give rise to maps of continuous
time systems (open and closed).  A sanity check was provided by
proving that directed graphs with nodes decorated with manifolds give
rise to morphism in the operad $\cO(
{\SSub_{\textrm{int}}}^\mathrm{op})$ of surjective submersions.
Fibrations of decorated graphs were shown to give rise to maps between
networks of continuous time systems and to maps of dynamical systems.
The ``higher operad'' of surjective submersions (i.e., the colored
operad $\cO({\SSub_{\textrm{int}}}^\mathrm{op})$ together with the morphisms between
networks) was in turn interpreted by constructing  a map to the double
category $\RelVect^\Box$ of vector spaces, linear maps and linear
relations.

In the present paper we develop an analogous theory for hybrid
dynamical systems.  Our  motivation, to some extent, is  to have another natural example
of  a ``higher operad'' before we proceed with building a general
theory of ``higher operads'' suitable for constructing categories of
networks.   Since we don't have such a general theory yet, we choose
not to emphasize the operadic aspects of
Theorem~\ref{thm:main_result}.

It is easy to specialize Theorem~\ref{thm:main_result} to the subcategory
 of networks of hybrid systems constructed from decorated graphs
(the nodes now are decorated with hybrid phase spaces).  Analogous to
a result in \cite{L2}, fibrations of decorated graphs 
give rise to maps between hybrid systems.  In particular, this 
allow an extension of the coupled cell network formalism to hybrid systems.
We plan to address this elsewhere.

Finally, we note that our  hybrid dynamical systems  are not necessarily
deterministic.  This is done for two reasons.  First of all,
non-deterministic hybrid systems have a wide range of applications.
Secondly, the formalism is simpler.  With a bit more work the approach
developed in this paper can handle deterministic systems:
 see \cite{Schmidt}.

  \subsection*{Related work} We will not attempt to survey all the
  work done on hybrid dynamical systems.  The area is vast.  Most of
  the papers in the area appear in engineering publications.  As far
  as we know the use of category theory in hybrid systems is rather
  limited.  We note the work of Ames in his thesis \cite{Ames} and in
  a number of subsequent publications.  Other notable papers are by
  Tabuada, Pappas and Lima \cite{TPL} and by Lorenco and
  Sernadas \cite{LS}.  A radically different approach to hybrid open
  systems and their interconnection has recently been developed by
  Schultz and Spivak \cite{SS}.  It is based on sheaves.  It would be
  interesting to understand how the formalism of \cite{SS} relates to
  the traditional view of hybrid systems and to the formalism
  developed here.\\

\noindent {\bf Acknowledgments:} E. L.\ thanks Sayan Mitra for many hours of
conversations. E. L.\ also thanks Michael Warren for some useful discussions.

\section{Background}
\subsection{Relations and linear relations}\mbox{}\\
We start by setting our notation for relations and their compositions.
We view a   relation $X\xrightarrow{R} Y$ from a set $X$ to a set
$Y$ as a generalization of the notion of a function from $X$ to $Y$.
It also generalizes the notion of a {\em partial} function from $X$ to
$Y$.  The following  definitions are standard.
\begin{definition} \label{notation:rel}
We define a {\em relation} $R:X\to Y$ from a set $X$ to a set $Y$ to be 
a subset of the product  $X\times Y$.  The composition
of relations $(Z\xleftarrow{S}Y) \circ
(Y\xleftarrow{R}X)$ is defined by
\begin{equation}\label{eq2.1}
S\circ R := \{ (x,z) \in X\times Z \mid \textrm{ there exists } y\in Y
\textrm{ with } (y,z)\in S, (x,y)\in R\}.
\end{equation}
A relation $X\xrightarrow{R}Y$ is a {\sf function} if for any $x\in X$
the intersection $ (\{x\}\times Y) \cap R$ is a singleton.   In other
words we identify a function $X\xrightarrow{f}Y$ with its graph
$\Graph(f) := \{ (x,y) \in X\times Y \mid y = f(x)\}$.  A relation
$X\xrightarrow{R}Y$ is a {\sf partial function } if for any $x\in X$
the intersection  $( \{x\} \times Y) \cap R$ is either empty or a single
point (and so defines a function from a subset of $X$ to $Y$).
\end{definition}

\begin{definition}[The 2-category $\RelSet$]
Sets and relations form a 2-category $\RelSet$. The objects of
$\RelSet$ are sets.  The 1-arrows of $\RelSet$ are relations with composition
defined by \eqref{eq2.1}.  A 2-arrow from a
relation $X\xrightarrow{R}Y$ to a relation $X\xrightarrow{R'}Y$ is an
inclusion $R\hookrightarrow R'$.
\end{definition}

\begin{definition}[The 2-category $\RelVect$]  \label{def:relvect}
  Vector spaces and linear relations form a 2-category $\RelVect$. The
  objects of $\RelVect$ are vector spaces.  A 1-arrow
  $W\xrightarrow{R} V$ in $\RelVect$ is a
  linear subspace $R$ of $W\times V$, that is, a linear relation.  The
  composition of linear relations is defined by \eqref{eq2.1}, that is,
  it is defined exactly the same way as the composition of
  set-theoretic relations.  A 2-arrow from a linear relation
  $W\xrightarrow{R}V$ to a linear relation $W\xrightarrow{R'}V$ is a
  (linear) inclusion $R\hookrightarrow R'$.
\end{definition}
The following definition is standard for morphisms in $\RelSet$.  It
works equally well in $\RelVect$.
\begin{definition} \label{def:transpose}
  Given a linear relation $R:V\to W$ its {\sf transpose} is the
  relation $R^T:W\to V$ defined by
  \[
R^T =\{ (w, v) \in W\times V\mid (v,w) \in R\}.
   \] 
 \end{definition}

\subsection{Double categories} \label{subsec:double}\mbox{}

 The 2-category $\RelVect$ is the horizontal 2-category of a double
 category $\RelVect^\Box$ defined below.  To define $\RelVect^\Box$ we
 need to recall the notion of a double category which is due to
 Charles Ehresmann. These are categories which may be defined as
 categories internal to the category $\mathsf{CAT}$ of categories
 \cite{BMM, Shul, Shulman10} (we notationally distinguish between the
 2-category $\CAT$ of not necessarily small categories and the
 2-category $\Cat$ of small categories).  Double categories, double 
 functors and vertical transformations will  get a fair amount of use in this paper.
 
\begin{definition}[Double category]
\label{def:double_cat}
 A {\sf double category} $\D$ consists of two categories $\D_1$
(of arrows) and $\D_0$ (of objects) together with four structure
{\em functors}:
\begin{gather*}
  \frs, \frt:\D_1 \to \D_0 \quad (\textrm{source and target}), \qquad 
  \fru: \D_0 \to \D_1\quad (\textrm{unit})\\
  \fm: \D_1\times_{\frs, \D_0, \frt} \D_1\to \D_1 , 
\quad  (\xLeftarrow{\alpha}, \xLeftarrow{\beta}) \mapsto 
\fm(\xLeftarrow{\alpha}, \xLeftarrow{\beta})=: \alpha* \beta
\qquad (\textrm{multiplication/composition})
\end{gather*}
so that
\begin{gather*}
\frs \circ \fru = \id_{\D_0}= \frt \circ \fru, \\
\frs (\alpha *\beta) = \frs (\beta),\qquad \frt (\alpha *\beta) = \frt (\alpha),\\
(\alpha * \beta)* \gamma =
\alpha *(\beta *\gamma),\\
\alpha* \fru(\frs(\alpha)) = \alpha\quad \textrm{and} \quad\fru(\frt(\alpha))* \alpha =\alpha
\end{gather*}
for all arrows $\alpha, \beta, \gamma$ of  $\D_1$.
\end{definition}
\begin{remark}
 There are weaker notions
of double categories such as pseudo-double categories.  We won't
use them in this paper.  The
reader should be warned that pseudo-double categories are often
referred to as double categories.
\end{remark}

\begin{notation} Let $\D$ be a double category. We call the objects of
  the category $\D_0$ {\sf $0$-cells} or {\sf
    objects} and the morphisms of $\D_0$ the ``vertical'' {\sf
    1-morphisms}. We call the 
  objects of the category $\D_1$ ``horizontal'' {\sf 1-cells}  (or the
  ``horizontal'' {\sf 1-morphisms}).  A morphism $\alpha:\mu\Rightarrow \nu$ of
  $\D_1$ with $\frs (\alpha) = (a\xrightarrow{f} b)$ and $\frt(\alpha)
  =(c\xrightarrow{g} d)$ is a {\sf
    2-morphism} or a {\sf 2-cell} (we will use the two terms
  interchangeably).  We may depict such a  2-cell   as
  \begin{equation}\label{eq:2-mor}
    \xy
(-8, 6)*+{c} ="1"; 
(8, 6)*+{a} ="2";
(-8,-6)*+{d }="3";
(8, -6)*+{b}="4";
{\ar@{->}_{\mu} "2";"1"};
{\ar@{->}^{\nu} "4";"3"};
{\ar@{->}_{g} "1";"3"};
{\ar@{->}^{f} "2";"4"};
{\ar@{=>}^{\alpha} (0,3)*{};(0,-3)};
\endxy
\end{equation}
with the arrows of $\D_0$ drawn vertically, the objects of $\D_1$
drawn as horizontal arrows and the 2-cell $\alpha$ drawn as a double
arrow from $\mu$ to $\nu$.  Note that
\eqref{eq:2-mor} is {\bf not} necessarily a 2-commuting diagram in
some 2-category.
\end{notation}

\begin{remark}
Equivalently one can define a (strict) double category $\D$ as consisting of
``tiles'' or ``squares'' that can be composed by either stacking the
squares  vertically or horizontally.  The vertical composition of
squares correspond to the composition of morphisms in $\D_1$.  The horizontal
composition of squares corresponds to  the functor $\fm$.  Given 4
composable 
squares 
\[
  \xy
  (-12, 12)*+{\bullet} ="1";
  (0,12 )*+{\bullet} ="2";
  (12,12 )*+{\bullet} ="3";
  (-12,0 )*+{\bullet} ="4";
  (0,0 )*+{\bullet} ="5";
  (12, 0)*+{\bullet} ="6";
  (-12,-12 )*+{\bullet} ="7";
  (0,-12 )*+{\bullet} ="8";
  (12,-12 )*+{\bullet} ="9";
  {\ar@{<-} "1";"2"};
  {\ar@{<-} "2";"3"};
  {\ar@{<-} "4";"5"};
  {\ar@{<-} "5";"6"};
  {\ar@{->} "4";"7"};
  {\ar@{->}"5";"8"};
   {\ar@{->}"6";"9"};
  {\ar@{->}"1";"4"};
  {\ar@{->} "2";"5"};
  {\ar@{->} "3";"6"};
   {\ar@{->} "8";"7"};
  {\ar@{->} "9";"8"};
   \endxy
 \]
 we can first compose them vertically in pairs and then compose them
 horizontally or the other way around.  Since $\fm:\D_1\times_{\D_0}
 \D_1\to \D_1$ is a functor, the results are equal.
 
 This suggests that a double category $\D$ may also be viewed as a
 category in $\CAT$ whose category of objects $\D_0'$ has the objects
 of $\D_1$ as arrows (with the composition defined by the functor
 $\fm$).  The category of arrows $\D_1'$ has the arrows of $\D_0$ as
 objects (and the squares of $\D$ as morphisms now composed
 horizontally).  That is, we may take the directed tiles of the double
 category $\D$ and reflect them along the appropriate diagonal.  Since our
 horizontal arrows point left and vertical arrows point down, c.f.\ \eqref{eq:2-mor}, the diagonal in
 question is
 the southwest -- northeast
 diagonal.  For this reason ``horizontal'' and ``vertical''
 terminology of double categories is somewhat arbitrary: presenting a
 double category as a category internal to $\CAT$ hides this
 reflection symmetry and introduces a bias.
 \end{remark} 

 \begin{definition}[Horizontal 2-category of a double category]
   Associated to a double category $\D$ there is a {\sf horizontal
   2-category $\cH(\D)$} whose objects are the objects of $\D$,
   1-arrows are the horizontal morphisms of $\D$ and 2-arrows are the
   2-cells of $\D$ of the form 
\begin{equation} \label{eq:globular}
\xy
(-8, 6)*+{c} ="1"; 
(8, 6)*+{a} ="2";
(-8,-8)*+{c }="3";
(8, -8)*+{a}="4";
{\ar@{->}_{\mu} "2";"1"};
{\ar@{->}^{\nu} "4";"3"};
{\ar@{->}_{\id} "1";"3"};
{\ar@{->}^{\id} "2";"4"};
{\ar@{=>}^{\alpha} (0,3)*{};(0,-3)};
\endxy .
\end{equation}
\end{definition}

\begin{definition}
2-cells in a double category $\D$ of the form \eqref{eq:globular} are
called {\sf globular}.
\end{definition}
\begin{remark}
Since a globular 2-cell is a 2-arrow in
the horizontal category $\cH(D)$ we may picture it  as $
\xy
(-10,0)*+ {c}="1"; 
(10,0)*+ {a}="2"; 
{\ar@/^1pc/^{\nu} "2";"1"};
{\ar@/_1pc/_{\mu} "2";"1"};
{\ar@{=>}^{\alpha} (0,2)*{};(0,-2)};
\endxy
$ .
\end{remark}
\noindent
In this paper we will use a number of double categories.  We start by
defining the double category of manifolds with corners, smooth maps
and set-theoretic relations (cf.\ \cite[Definition 3.9]{L1}).  The
definition of a category $\Man$ of manifolds with corners is reviewed
in Appendix~\ref{app:A}.  The reader should be aware that in the
differential-geometric literature there is a number of incompatible
definitions of smooth maps between manifolds with corners.  The
definition we use is probably the least restrictive.

 \begin{definition}[The double category $\RelMan^\Box$]
   \label{def:relmanbox}
   The objects of
   the double category $\RelMan^\Box$ are manifolds with corners.  The
   vertical arrows are the smooth maps.  Thus the category of objects 
   $(\RelMan^\Box)_0$ is the category $\Man$ of manifolds with corners
   and smooth maps.  A horizontal arrow $\mu:M\to N$ is a
   set-theoretic relation $\mu$ from $M$ to $N$,  that is a 
   subset of $M\times N$ (cf.\ Definition~\ref{notation:rel}).  A 2-cell
   $\alpha$ from a relation $\mu:M\to N$ to a relation $\nu: P\to Q$
   is a pair of smooth maps $(g: M\to P, f:N\to Q)$ so that
\[
(g\times f) (\mu) \subset \nu.
\]
 The composition in the arrows category $(\RelMan^\Box)_1$ is given by
 composing pairs of maps:
 \[
(\tau\xleftarrow{(k,l)} \nu) \circ (\nu\xleftarrow{(g,f)} \mu) = \tau
\xleftarrow{(k\circ g, l\circ f)} \mu.
  \] 
  The functor
  \[
\mathfrak{m}: (\RelMan^\Box)_1\times_{\mathfrak{s}, (\RelMan^\Box)_0, \mathfrak{t} } (\RelMan^\Box)_1\to (\RelMan^\Box)_1
\]
is defined by composing relations:
\[
\fm \left(   (\nu'\xLeftarrow{(f,h)}\mu', \nu\xLeftarrow{(g,f)} \mu
\right)
:= (\nu'\circ \nu\xLeftarrow{(g,h)} \mu'\circ \mu).
\]
\end{definition}
\begin{remark}\label{rmrk:RelMan}
The horizontal category $\cH(\RelMan^\Box)$ is the 2-category
$\RelMan$ of manifolds with corners, {\em arbitrary} (i.e., not
necessarily smooth) relations and inclusions of relations.
 \end{remark} 
The double category $\RelVect^\Box$ of vector spaces, linear maps and
linear relations is defined analogously to the definition of
$\RelMan^\Box$.  More formally we have:
\begin{definition}[The double category $\RelVect^\Box$]   The objects
  of the double category $\RelVect^\Box$ are (real) vector spaces.
  The vertical arrows are linear maps.  
Thus the category $(\RelVect^\Box)_0$ of objects  is the category $\Vect$ of
vector spaces and linear maps.
A horizontal arrow
  $\mu:V\to W$ is  a linear relation $\mu$ from a vector space $V$ to
  a vector space  $W$,
  which is  a vector subspace of $V\times W$.   A 2-cell $\alpha$ from
  a relation $\mu:V\to W$ to a relation $\nu: X\to Y$ is a pair of
  linear maps $(g: V\to X, f:W\to Y)$ so that
\[
(g\times f) (\mu) \subset \nu.
\]
 The composition in the category $(\RelVect^\Box)_1$ of arrows is given by
 composing pairs of maps:
 \[
(\tau\xleftarrow{(k,l)} \nu) \circ (\nu\xleftarrow{(g,f)} \mu) = \tau
\xleftarrow{(k\circ g, l\circ f)} \mu.
  \] 
  The functor
  \[
\mathfrak{m}: (\RelVect^\Box)_1\times_{\mathfrak{s}, (\RelMan^\Box)_0, \mathfrak{t} } (\RelVect^\Box)_1\to (\RelVect^\Box)_1
\]
is defined by composing relations:
\[ 
\fm \left(   (\nu'\xLeftarrow{(f,h)}\mu', \nu\xLeftarrow{(g,f)} \mu
\right)
:= (\nu'\circ \nu\xLeftarrow{(g,h)} \mu'\circ \mu).
\]
\end{definition}

\begin{remark}
It is easy to see that the horizontal category $\cH(\RelVect^\Box)$ is
the 2-category $\RelVect$ of vector spaces, linear relations and inclusions (Definition~\ref{def:relvect}).
\end{remark}

We will also need the double category $\SSub^\Box$ of surjective
submersions which was introduced in \cite{L2}.  To state the
definition of $\SSub^\Box$ it will be convenient to first recall the
notion of a map between two surjective submersions and also the notion
of an interconnection morphism.  We first introduce some 
notation which is motivated by control theory.

\begin{notation}
It will be convenient to denote a surjective submersion of
manifolds with corners by a single letter.  Thus a surjective
submersion $a$ consists of two manifolds with corners $a_\tot$ (the
``total space''), 
$a_\st$ (the ``state space'') and a surjective submersion
$p_a:a_\tot\to a_\st$.  We write $a= (a_\tot \xrightarrow{p_a}a_\st)$.
 \end{notation} 
  
\begin{definition}  \label{def:ssub} A {\sf morphism of surjective submersions} $f:a\to b$ is a
  pair of morphisms $f_\tot:a_\tot\to b_\tot$, $f_\st:a_\st\to b_\st$
  in the category $\Man$ of 
  manifolds with corners so that the diagram
  \[
\xy
(-10, 10)*+{b_\tot} ="1"; 
(10, 10)*+{a_\tot} ="2";
(-10,-5)*+{b_\st}="3";
(10, -5)*+{a_\st}="4";
{\ar@{->}_{f_\tot} "2";"1"};
{\ar@{->}^{p_b} "4";"3"};
{\ar@{->}_{p_a} "1";"3"};
{\ar@{->}^{f_\st} "2";"4"};
\endxy
\]
commutes $\Man$.   Note that the morphisms in the
category $\Man$ are $C^\infty$ maps, see Appendix~\ref{app:A}.

A morphism of surjective submersions $f:a\to b$ is an
{\sf interconnection morphism } if $f_\st:a_\st\to b_\st$ is a diffeomorphism.
\end{definition}
\begin{remark}
A motivation for the terminology of Definition~\ref{def:ssub} is
discussed in \cite{L2}. 
See  Example~\ref{ex:intercon} below.
 \end{remark} 

\begin{notation}\label{not:ssub}
Surjective submersions and their morphisms form a category which we
denote by $\SSub$.   It is a subcategory of the category of arrows of
$\Man$, the category of manifolds with corners and smooth maps.

Surjective submersions and interconnection morphisms form a
subcategory of $\SSub$.  We denote it by $\SSub^\inter$.
  \end{notation}

\begin{definition}[The double category $\SSub^\Box$ of
  surjective submersions]\label{def:ssub_box}
  The objects of the double category $\SSub^\Box$ are surjective
  submersions.  The horizontal 1-morphisms are maps/morphisms of surjective
  submersions (Definition~\ref{def:ssub} above). The vertical
  1-morphisms are interconnection morphisms (Definition~\ref{def:ssub}). 
    The 2-cells of $\SSub^\Box$ are commuting squares 
\[
  \xy
(-10,10)*+{c}="1";
(10, 10)*+{a}="2";
(-10,-5)*+{d}="3";
(10, -5)*+{b}="4";
{\ar@{<-}^{\mu} "1";"2"};
{\ar@{->}_{g} "1";"3"};
{\ar@{->}^{f} "2";"4"};
{\ar@{<-}_{\nu} "3";"4"};
  \endxy
\]
in the category   $\SSub$ of surjective submersions, 
where $\mu,\nu$ are maps of submersions and  $f$, $g$ are the interconnection
morphisms.  In particular,  the category of objects of the double
category $\SSub^\Box$ is the
category $\SSub^\inter$.
\end{definition}

We will need two versions of a ``functor between double categories:'' strict
and lax.    Functors between double categories  are often  referred to as ``double functors.''
\begin{definition}[Strict 1-morphism/map/double functor  of double categories]
  A strict {\sf morphism/map/double functor}  $F$ from a double category $\BB$ to a double
  category $\D$ is a pair of ordinary functors $F_0:\BB_0\to \D_0$,
  $F_1: \BB_1\to \D_1$ which commute strictly with the source, target
  and unit functors of the double categories $\BB$ and $\D$
(i.e., $\frs \circ F_1 = F_0 \circ \frs$, $\frt \circ F_1 = F_0 \circ
\frt$ and $\fru \circ F_0 = F_1 \circ \fru$)
  and
  strictly preserves the composition functor $\fm$: the diagram
  \begin{equation}\label{eq:2.18}
\xy
(-20, 10)*+{\BB_1\times_{\BB_0} \BB_1} ="1"; 
(20, 10)*+{\D_1\times_{\D_0}\D_1} ="2";
(-20,-6)*+{\BB_1 }="3";
(20, -6)*+{\D_1}="4";
{\ar@{->}^{F_1\times_{F_0}F_1} "1";"2"};
{\ar@{->}^{F_1} "3";"4"};
{\ar@{->}_{\fm} "1";"3"};
{\ar@{->}^{\fm} "2";"4"};
\endxy
    \end{equation}
strictly commutes in the 2-category $\CAT$ of categories.
\end{definition}
\begin{definition}[Normal lax 1-morphism/ lax functor of double
  categories] \label{def:lax-1-mor} A {\sf normal lax 1-morphism $F$} (or a
 {\sf  normal lax functor}) from a double category $\BB$ to a double
  category $\D$ consists of a pair of ordinary functors
  $F_0:\BB_0\to \D_0$, $F_1: \BB_1\to \D_1$ which commute strictly
  with the source, target and unit functors of the double categories
  $\BB$ and $\D$ together with a natural transformation
  $F_*: F_1\circ \fm \Rightarrow \fm \circ (F_1\times _{F_0} F_1)$
  which is subject to a  coherence condition  spelled out below.

The diagram  \eqref{eq:2.18} 2-commutes in $\CAT$ and the
  2-commutativity is witnessed by the natural transformation $F_*$.
  Thus for every pair $(\mu, \nu)$ of objects of $\BB_1$ composable
  under $\fm$ we have a (globular) 2-cell
  $(F_*)_{\mu,\nu}: F_1(\mu) *F_1 (\nu)\Rightarrow F_1(\mu*\nu)$. We
   further require  that for
  any triple of objects in $(\mu,\nu, \sigma)$ of $\BB_1$ composable
  under $\fm$
  \[
    (F_*)_{\mu*\nu, \sigma}\circ \left(
        (F_*)_{\mu,\nu} * \id_{F_1(\sigma)}        \right) =
      (F_*)_{\mu, \nu*\sigma}\circ \left(
\id_{F_1(\mu)}* (F_*)_{\nu, \sigma}
        \right)
      \]
      (recall that $\alpha *\beta := \fm(\alpha, \beta)$ for any two
      2-cells $\alpha, \beta$).
\end{definition}
\begin{remark}
  Since more general lax functors (the ones that are not necessarily normal) will
  make no appearance in this paper we will refer to normal lax
  functors simply as lax functors.  We trust that this will cause no confusion.
\end{remark}

Since double categories are categories internal to the (2-)category $\CAT$
of categories, there are two ways to internalize the notion of a
natural transformation between two double functors.
Namely given two double functors $F,G: \BB \to \DD$ between double categories
one can ask for a functor $\alpha:\BB_0\to \DD_1$ making an
appropriate diagram of categories and functors to commute (or
2-commute in the lax version).   This lead to the notion of a {\sf
  horizontal transformation}.  Alternatively since each double functor
is a pair of ordinary functors one can ask for a pair of (ordinary) natural
transformation.  This leads to the definition of a {\sf vertical
  transformation} which we now recall in the case where the double
functors are strict.  This  is the only case we need.
\begin{definition}[vertical transformation]
Let $F,G:\BB\to \DD$ be two strict double functors between two double
categories.   A {\sf vertical transformation} $\alpha$ from $F$ to
$G$ is a pair of natural transformations $\alpha_0: F_0 \Rightarrow
G_0$, $\alpha_1: F_1\Rightarrow  G_1$ (both often written as $\alpha$) subject
to the following conditions:
\begin{enumerate}
\item $\alpha$ is compatible with the source and
  target functors: for any object $\mu$ of $\BB_1$
  \[
    \frs ((\alpha_1)_\mu )= (\alpha_0)_{\frs(\mu)}, \qquad
    \frt ((\alpha_1)_\mu )= (\alpha_0)_{\frt(\mu)}  ;
    \]
\item $\alpha$ is compatible with the multiplication/composition functors
  $\fm$ of $\BB$ and $\DD$:
  for any pair $\mu, \nu$ of composable objects of $\BB_1$
  \[
(\alpha_1)_{\mu*\nu} = (\alpha_1)_\mu* (\alpha_1)_\nu;
\]
\item $\alpha$ is compatible with the unit functors:
  for any object $a$ of $\BB$
  \[
\fru ((\alpha_0)_a) = (\alpha_1)_{\fru(a)}.
   \] 
 \end{enumerate}
We write $\alpha:F\Rightarrow G$.
\end{definition}

\begin{remark} \label{rmrk:2.29}
Vertical transformations can be composed vertically component-wise:
given two vertical transformations $\alpha:F\Rightarrow G$, $\beta:
G\Rightarrow H$ we define $\beta\circ_v \alpha$ by setting 
\[
  (\beta\circ_v \alpha)_0 = \beta_0 \circ_v \alpha_0\qquad
  (\beta\circ_v \alpha)_1 = \beta_1 \circ_v \alpha_1.
  \]
 \end{remark} 

 We next explain a connection between control (open) systems and lax
 double functors.   The terms ``open system'' and ``control system''
 mean the same thing but have somewhat different connotations.
 ``Open'' is the opposite of ``closed,'' which means that energy and
 information can flow in and out of an open system.  ``Control''
 implies that we can affect the system's behavior.  
  Since in this paper we will not address the issues of
 shaping  behavior of systems, we prefer to call them open.  On the
 other hand, in several previous papers \cite{DL1}, \cite{L1} we
 used a functor from open systems to vector spaces and linear
 relations that we called $\Crl$.  Renaming it $\mathsf{Open}$ may
 well cause confusion especially since every topological space $Z$ has a
 category $\mathsf{Open} (Z)$ of open sets.   To avoid confusion and
 to preserve backward compatibility we will continue to call the
 functor in question  $\Crl$.
 
We now record  our  definition of a
control (open) system which was already mentioned in the introduction.

\begin{definition}\label{def:open_sys}
  An {\sf open} (or a {\sf
      control}) system  is a pair $(a, F)$ where  $a= (a_\tot
    \xrightarrow{p_a} a_\st)$ is a surjective submersion and $F:
    a_\tot \to Ta_\st$ is a smooth map
    such that $ F(q) \in T_{p_a (q)} a_\st$ for all $q\in a_\tot$.
      That is $\pi_{a_\st} \circ F = p_a$, where $\pi_{a_\st}:Ta_\st
      \to a_\st$ is the canonical submersion.
    \end{definition}
 \begin{remark} Given an open system $(a, F)$  we may also refer  to
   the  map $F:a_\tot \to Ta_\st$ as
   an {\sf  open system on the the surjective submersion $a$} or as an
   {\sf open system} (with the surjective submersion $a$ suppressed).
   \end{remark}   

    \begin{notation}[$\Crl(a)$] \label{not:crl}
      We denote the collection of all open system on a fixed submersion
      $a$ by $\Crl(a)$:
\[
\Crl(a):= \{F:a_\tot \to Ta_\st \mid \pi_{a_\st}\circ F = p_a\}
\]
  The collection $
\Crl(a) $ is a real vector space.
    \end{notation}

    \begin{definition} \label{def:Crl(f)} 
      Let $f:a\to b$ be a map of surjective
      submersions, that is,  a pair of
smooth maps $f_\tot :a_\tot \to b_\tot$, $f_\st:a_\st\to b_\st$ such
that the diagram
\[
\xy
(-10, 6)*+{a_\tot} ="1"; 
(10, 6)*+{b_\tot} ="2";
(-10,-10)*+{a_\st }="3";
(10, -10)*+{b_\st}="4";
{\ar@{->}_{p_a} "1";"3"};
{\ar@{->}^{p_b} "2";"4"};
{\ar@{->}^{f_\tot} "1";"2"};
{\ar@{->}_{f_\st} "3";"4"};
\endxy
\]
commutes.  We define a linear relation $\Crl(f):\Crl(a) \to  \Crl(b)$ by 
\[
  \Crl(f) := \left\{ (F,G)\in \Crl(a)\times \Crl(b) \left|\quad  
\xy
(-10, 6)*+{a_\tot} ="1"; 
(10, 6)*+{b_\tot} ="2";
(-10,-10)*+{Ta_\st }="3";
(10, -10)*+{Tb_\st}="4";
{\ar@{->}_{F} "1";"3"};
{\ar@{->}^{G} "2";"4"};
{\ar@{->}^{f_\tot} "1";"2"};
{\ar@{->}_{Tf_\st} "3";"4"};
\endxy \quad \textrm{commutes} \right \}. \right. 
\]
\end{definition}
The next definition says more or less the same thing as
Definition~\ref{def:Crl(f)} but from a somewhat different point of
view.
\begin{definition}
 Let $f:a\to b$ be a map of surjective
      submersions.  Suppose $w\in \Crl(b)$ and $u\in \Crl(a)$ are two
      open systems so that $(u,w)\in \Crl(f)$, i.e., the diagram
      \[
 \xy
(-10, 6)*+{a_\tot} ="1"; 
(10, 6)*+{b_\tot} ="2";
(-10,-10)*+{Ta_\st }="3";
(10, -10)*+{Tb_\st}="4";
{\ar@{->}_{u} "1";"3"};
{\ar@{->}^{w} "2";"4"};
{\ar@{->}^{f_\tot} "1";"2"};
{\ar@{->}_{Tf_\st} "3";"4"};
\endxy       
\]
commutes.  We then say that the control system $u$ {\sf is
  $f$-related} to the control system $w$.
  \end{definition}
\begin{definition} \label{def:2.35}
Suppose $\varphi:a\to b$ is an interconnection morphism  (Definition~\ref{def:ssub}).  Then $\varphi$ induces a
linear map $\varphi^*: \Crl(b) \to \Crl (a)$.  It is defined by
\[
\varphi^*F:= T(\varphi_\st)\inv \circ F \circ \varphi_\tot
\]
for all open systems $F\in \Crl(b)$.
\end{definition}
\begin{remark}
For an interconnection morphism  $\varphi:a\to b$ between two
surjective submersions  the linear relation $\Crl(\varphi): \Crl(a) \to \Crl(b)$ is given
by 
\[
\Crl(\varphi):  =\{(\varphi^*F, F) \in \Crl(a)\times \Crl(b)\mid 
F\in \Crl(b)\} 
\]
where the linear map $\varphi^*$ is defined above (Definition~\ref{def:2.35}).
Consequently the linear relation $\Crl(\varphi)$ is the transpose of the graph of the
 map $\varphi^*$ (see Definition~\ref{def:transpose}).
\end{remark}
\begin{remark} \label{rmrk:st_conv}
As is traditional in differential geometry and geometric mechanics all
manifolds with corners are nonempty unless specifically mentioned otherwise.
  \end{remark}
\begin{example} \label{ex:intercon}
  Let $U$ and $M$ be two manifolds with corners.  The projection on
  the second factor $p:U\times M\to M$ is a surjective submersion
  (note 
  Remark~\ref{rmrk:st_conv}).  So
  is the identity map $\id_M:M\to M$.  Consider a map of surjective
  submersions
  $\varphi= (\varphi_\tot, \varphi_\st): (M\xrightarrow{\id_M}M) \to
  (U\times M\xrightarrow{p} M) $.  The map
  $\varphi_\tot:M\to U\times M$ then has to be of the form
  $\varphi_\tot(m) = (h(m), \varphi_\st(m))$ where $h:M\to U$ is a
  smooth map.  In particular if $\varphi_\st = \id_M$ then for any map
  $h$, $\varphi = ((h, \id_M), \id_M)$ is an interconnection morphism.
  The induced map
  $\varphi^*: \Crl(U\times M\to M) \to \Crl(M\xrightarrow{\id} M) =
  \scrX(M)$ is given by
  \[
(\varphi^*F ) (m) = F(h(m), m).
\]
(Recall that $\scrX(M)$ denotes the space of vector fields on the
manifold $M$.)  To summarize: given an open system $F:U\times M\to TM$
we produce from $F$ a vector field $X$ on the manifold $M$ by plugging
the values of the states of the open system into the controls $U$ by
means of the map $h$.
  \end{example}
We observe that pullbacks by interconnection maps preserve relations
between open systems.
\begin{lemma}
Let $f:a\to c$, $g:b\to d$ be maps of surjective submersions,
$\varphi: a\to b$, $\psi: c\to d$ two interconnection maps so that the
diagram
\[
\xy
(-10,6)*+{c}="1";
(10, 6)*+{a}="2";
(-10,-7)*+{d}="3";
(10, -7)*+{b}="4";
{\ar@{<-}^{f} "1";"2"};
{\ar@{->}_{\psi} "1";"3"};
{\ar@{->}^{\varphi} "2";"4"};
{\ar@{->}^{g} "4";"3"};
\endxy
\]
commutes in the category $\SSub$ of surjective submersions (i.e., the diagram is a
2-cell in the double category $\SSub^\Box$).  If two open systems
$F\in \Crl(b)$ and $G\in \Crl(d)$ are $g$-related then the
interconnected systems $\varphi^*F$ and $\psi^*G$ are $f$-related.
Consequently
  \[
 \xy
(-10,6)*+{\Crl(c)}="1";
(10, 6)*+{\Crl(a)}="2";
(-10,-7)*+{\Crl(d)}="3";
(10, -7)*+{\Crl(b)}="4";
{\ar@{<-}^{\Crl(f)} "1";"2"};
{\ar@{<-}_{\psi^*} "1";"3"};
{\ar@{<-}^{\varphi^*} "2";"4"};
{\ar@{->}^{\Crl(g)} "4";"3"};
{\ar@{<=}^{} (0,2)*{};(0,-2)};
    \endxy
      \]
      is a 2-cell in the double category $\RelVect^\Box$ of vector
      spaces, linear maps and linear relations.
 \end{lemma}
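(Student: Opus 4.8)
The plan is to unwind the definitions and chase the relevant commuting squares. First I would take open systems $F \in \Crl(b)$ and $G \in \Crl(d)$ that are $g$-related, so by definition the square
\[
\xy
(-10, 6)*+{b_\tot} ="1";
(10, 6)*+{d_\tot} ="2";
(-10,-10)*+{Tb_\st }="3";
(10, -10)*+{Td_\st}="4";
{\ar@{->}_{F} "1";"3"};
{\ar@{->}^{G} "2";"4"};
{\ar@{->}^{g_\tot} "1";"2"};
{\ar@{->}_{Tg_\st} "3";"4"};
\endxy
\]
commutes, i.e.\ $G \circ g_\tot = Tg_\st \circ F$. I then want to show $\varphi^* F$ is $f$-related to $\psi^* G$, i.e.\ that $(\psi^* G) \circ f_\tot = T f_\st \circ (\varphi^* F)$ as maps $a_\tot \to T c_\st$. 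Here I use Definition~\ref{def:2.35}: $\varphi^* F = T(\varphi_\st)^{-1} \circ F \circ \varphi_\tot$ and $\psi^* G = T(\psi_\st)^{-1} \circ G \circ \psi_\tot$. The given 2-cell in $\SSub^\Box$ says $f \circ \varphi = \psi \circ g$ (the square with the two interconnection maps $\varphi, \psi$ vertical and $f, g$ horizontal commutes in $\SSub$), which on total and state components reads $f_\tot \circ \varphi_\tot = \psi_\tot \circ g_\tot$ and $f_\st \circ \varphi_\st = \psi_\st \circ g_\st$.

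Next I would compute both sides. Expanding the left side:
\[
(\psi^* G) \circ f_\tot = T(\psi_\st)^{-1} \circ G \circ \psi_\tot \circ f_\tot.
\]
The commutativity $f \circ \varphi = \psi \circ g$ does not immediately give $\psi_\tot \circ f_\tot$ in the right form, so instead I would start from the right side and work toward the left. We have
\[
T f_\st \circ (\varphi^* F) = T f_\st \circ T(\varphi_\st)^{-1} \circ F \circ \varphi_\tot = T(f_\st \circ \varphi_\st^{-1}) \circ F \circ \varphi_\tot.
\]
From $f_\st \circ \varphi_\st = \psi_\st \circ g_\st$ we get $f_\st \circ \varphi_\st^{-1} = \psi_\st \circ g_\st \circ \varphi_\st^{-1}$; but this requires relating $g_\st$ and the inverses. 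The cleaner route: applying the tangent functor to $f_\st \circ \varphi_\st = \psi_\st \circ g_\st$ gives $T f_\st \circ T \varphi_\st = T \psi_\st \circ T g_\st$, hence $T f_\st \circ T(\varphi_\st)^{-1} = T\psi_\st \circ T g_\st \circ T(\varphi_\st)^{-1}$. Then substituting the $g$-relatedness $F = T(g_\st)^{-1} \circ G \circ g_\tot$ (rearranged, using that $g_\st$ is a diffeomorphism as $g$ is an interconnection map) and the total-space identity $\varphi_\tot \circ$ (appropriately composed) $= $ the corresponding composite through $\psi$, everything collapses. The key identity I expect to carry the argument is that both $\varphi_\st$ and $g_\st$ (hence $\psi_\st$, being their composites with diffeomorphisms) are diffeomorphisms, so $T$ applied to them is invertible and commutes through the chase.

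The main obstacle, and the only real subtlety, is bookkeeping the inverses: $\varphi^*$ and $\psi^*$ each involve $T(\cdot)^{-1}$ on the state component, and one must check the commuting square $f \circ \varphi = \psi \circ g$ of $\SSub$ is compatible with passing to these inverses. Since $\varphi$ and $\psi$ are interconnection morphisms their state components are diffeomorphisms by Definition~\ref{def:ssub}, and $g$'s relatedness hypothesis only makes sense in a form that can be inverted on the state side here — so applying the functor $T$ to the commuting square of state components and inverting yields exactly the identity needed. Once $\varphi^* F$ is shown $f$-related to $\psi^* G$, the final assertion that the displayed square is a 2-cell in $\RelVect^\Box$ is immediate: by the Remark following Definition~\ref{def:2.35}, $\Crl(g)$ is the transpose of the graph of $g^* \colon \Crl(d) \to \Crl(b)$ and likewise $\Crl(f)$ for $f^*$, and the 2-cell condition $(\psi^* \times \varphi^*)(\Crl(g)) \subseteq \Crl(f)$ — reading the square in $\RelVect^\Box$ with the linear maps $\psi^*, \varphi^*$ vertical and the linear relations $\Crl(f), \Crl(g)$ horizontal — is precisely the statement just proved, that $g$-related pairs map to $f$-related pairs under $(\psi^*, \varphi^*)$.
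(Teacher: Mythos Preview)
Your overall strategy---unwind the definitions and chase the diagram---is the right one, and once carried out correctly it yields the lemma in a few lines. But the execution contains two genuine errors that would make the argument fail as written.

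First, you have the commutativity of the square backwards. With $f:a\to c$, $\varphi:a\to b$, $g:b\to d$, $\psi:c\to d$, the composite $f\circ\varphi$ does not type-check: $\varphi$ lands in $b$, not in the domain of $f$. The correct identity (the two paths from $a$ to $d$) is $\psi\circ f = g\circ\varphi$, hence $\psi_\tot\circ f_\tot = g_\tot\circ\varphi_\tot$ and $\psi_\st\circ f_\st = g_\st\circ\varphi_\st$.

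Second, and more seriously, you treat $g$ as an interconnection morphism: you write ``$F = T(g_\st)^{-1}\circ G\circ g_\tot$ \ldots\ using that $g_\st$ is a diffeomorphism as $g$ is an interconnection map,'' and in the last paragraph you say $\Crl(g)$ is the transpose of the graph of $g^*$. But $g$ is only a map of surjective submersions; the interconnection morphisms in the square are $\varphi$ and $\psi$, not $f$ and $g$. So $g_\st$ need not be invertible, $g^*$ is not defined, and you cannot rearrange the $g$-relatedness equation that way.

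The fix is short once the orientation is right. Starting from the right-hand side of the target identity,
\[
(\psi^*G)\circ f_\tot
= T(\psi_\st)^{-1}\circ G\circ \psi_\tot\circ f_\tot
= T(\psi_\st)^{-1}\circ G\circ g_\tot\circ \varphi_\tot
= T(\psi_\st)^{-1}\circ Tg_\st\circ F\circ \varphi_\tot,
\]
using first $\psi_\tot\circ f_\tot = g_\tot\circ\varphi_\tot$ and then the $g$-relatedness $G\circ g_\tot = Tg_\st\circ F$. Now from $\psi_\st\circ f_\st = g_\st\circ\varphi_\st$ and the invertibility of $\varphi_\st$ (this is where being an interconnection map is used) one gets $Tg_\st = T\psi_\st\circ Tf_\st\circ T(\varphi_\st)^{-1}$, and substituting gives
\[
T(\psi_\st)^{-1}\circ T\psi_\st\circ Tf_\st\circ T(\varphi_\st)^{-1}\circ F\circ\varphi_\tot
= Tf_\st\circ(\varphi^*F),
\]
which is the left-hand side. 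The final 2-cell assertion then follows exactly as you say: $(F,G)\in\Crl(g)$ implies $(\varphi^*F,\psi^*G)\in\Crl(f)$, i.e.\ $(\varphi^*\times\psi^*)(\Crl(g))\subseteq\Crl(f)$. (The paper itself does not give this computation; it cites \cite[Lemma~8.12]{L2}.)
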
 
 \begin{proof}
See \cite[Lemma~8.12]{L2}
\end{proof}

\begin{lemma}
The mapping from the 2-cells of the double category $\SSub^\Box$ of
surjective submersions to the double category $\RelVect^\Box$ given by
\[
    \Crl \left(
\xy
(-10,6)*+{c}="1";
(10, 6)*+{a}="2";
(-10,-7)*+{d}="3";
(10, -7)*+{b}="4";
{\ar@{<-}^{f} "1";"2"};
{\ar@{->}_{\psi} "1";"3"};
{\ar@{->}^{\varphi} "2";"4"};
{\ar@{->}^{g} "4";"3"};
{\ar@{=>}^{} (0,2)*{};(0,-2)};
\endxy
\right) :=
\xy
(-10,6)*+{\Crl(c)}="1";
(10, 6)*+{\Crl(a)}="2";
(-10,-7)*+{\Crl(d)}="3";
(10, -7)*+{\Crl(b)}="4";
{\ar@{<-}^{\Crl(f)} "1";"2"};
{\ar@{<-}_{\psi^*} "1";"3"};
{\ar@{<-}^{\varphi^*} "2";"4"};
{\ar@{->}^{\Crl(g)} "4";"3"};
{\ar@{<=}^{} (0,2)*{};(0,-2)};
    \endxy
   \] 
  defines a lax (contravariant) 1-morphism of double categories
\[
  \Crl: (\SSub ^\Box)^\op \to \RelVect^\Box.
\] 
 \end{lemma}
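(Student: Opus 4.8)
The plan is to verify directly that the data already assembled — the functor $\Crl:\SSub\to\RelVect$ on vertical arrows (which are interconnection morphisms, hence giving the linear maps $\varphi^*$ via Definition~\ref{def:2.35}), the linear relations $\Crl(f)$ on horizontal arrows (Definition~\ref{def:Crl(f)}), and the square-to-square assignment displayed in the statement — together satisfy the three bundles of axioms required of a strict functor on objects and 1-cells plus a normal lax structure on composition. I would organize the verification following the structure of a double functor: first the object-and-vertical part $\Crl_0$, then the horizontal-arrow part $\Crl_1$, then compatibility with source/target and units, and finally the lax constraint with its coherence.

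First I would check that $\Crl_0:(\SSub^\inter)^\op\to\Vect$ is a genuine functor: on objects $a\mapsto\Crl(a)$, and on an interconnection morphism $\varphi:a\to b$ the linear map $\varphi^*:\Crl(b)\to\Crl(a)$. Functoriality $(\psi\circ\varphi)^* = \varphi^*\circ\psi^*$ and $(\id_a)^* = \id$ is a one-line computation from the formula $\varphi^*F = T(\varphi_\st)^{-1}\circ F\circ\varphi_\tot$ together with the chain rule $T(\psi_\st\circ\varphi_\st) = T\psi_\st\circ T\varphi_\st$; this is essentially contained in \cite{L2} and I would cite it. Second, I would check that $\Crl_1$ is a functor on the category of horizontal arrows: its objects are maps $f:a\to b$ of surjective submersions, sent to $\Crl(f)\subset\Crl(a)\times\Crl(b)$, and its morphisms are the squares, sent to the squares of the displayed form. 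That the image square is indeed a 2-cell of $\RelVect^\Box$ — i.e.\ that $(\varphi^*\times\psi^*)(\Crl(g))\subset\Crl(f)$ — is exactly the previous Lemma (the one citing \cite[Lemma~8.12]{L2}), applied in the transposed/contravariant direction; I would spell out that the reversal of horizontal arrows in $(\SSub^\Box)^\op$ matches the direction of the relations $\Crl(f)$. Functoriality of $\Crl_1$ on squares reduces to functoriality of $\Crl_0$ componentwise, since vertical composition of squares in $\SSub^\Box$ composes the interconnection morphisms $g,f$ and the assignment on those legs is just $\Crl_0$.

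Compatibility with source and target is immediate by construction: $\frs$ and $\frt$ of a horizontal arrow $f:a\to b$ are $b$ and $a$ (note the op), and $\Crl_0$ of these are the source and target of the relation $\Crl(f)$; similarly on squares the left and right legs go to $\psi^*$ and $\varphi^*$, which are precisely $\Crl_0$ of those legs. For the unit functor I would check $\Crl_1(\fru(a)) = \fru(\Crl_0(a))$, i.e.\ that $\Crl(\id_a)$ is the diagonal (identity) relation on $\Crl(a)$ — clear from Definition~\ref{def:Crl(f)} since the commuting square with $f=\id$ forces $F=G$. The only genuinely new piece of work is the lax structure. For horizontally composable maps $f:a\to b$, $f':b\to b'$ in $\SSub$ (which become composable horizontal 1-cells in the op double category), the two relations to compare are $\Crl(f'\circ f)$ and the composite of relations $\Crl(f')\circ\Crl(f)$; I would exhibit the comparison 2-cell $(\Crl_*)_{f',f}:\Crl(f')\circ\Crl(f)\Rightarrow\Crl(f'\circ f)$ — equivalently, check the inclusion $\Crl(f')\circ\Crl(f)\subseteq\Crl(f'\circ f)$ of subspaces of $\Crl(a)\times\Crl(b')$ — which follows by pasting the two commuting squares of Definition~\ref{def:Crl(f)} and using $T(f'_\st\circ f_\st) = Tf'_\st\circ Tf_\st$. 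Then I would verify the coherence (associativity) equation from Definition~\ref{def:lax-1-mor}: both sides are the obvious inclusion $\Crl(f'')\circ\Crl(f')\circ\Crl(f)\hookrightarrow\Crl(f''\circ f'\circ f)$, and since all 2-cells in $\RelVect^\Box$ are inclusions of subspaces the diagram commutes automatically — there is nothing to check beyond matching sources and targets.

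The main obstacle is bookkeeping rather than mathematics: keeping straight the variance, since we are building a \emph{contravariant} lax functor out of $(\SSub^\Box)^\op$, so horizontal arrows reverse while vertical (interconnection) arrows do not, and making sure the displayed square in the statement — with $\Crl(f),\Crl(g)$ pointing the way they do and $\psi^*,\varphi^*$ as vertical legs — is genuinely a well-formed 2-cell of $\RelVect^\Box$ in that reversed convention. Once the directions are pinned down, every remaining step is either a chain-rule computation on tangent maps or an appeal to the two preceding Lemmas; I would therefore present the proof as: (i) $\Crl_0$ is a functor (cite \cite{L2}); (ii) $\Crl_1$ is a functor and lands in 2-cells of $\RelVect^\Box$ (previous Lemma, transposed); (iii) source/target/unit compatibility (immediate); (iv) the lax comparison and its coherence (pasting of the defining squares, then automatic because 2-cells in $\RelVect^\Box$ are inclusions).
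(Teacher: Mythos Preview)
Your proposal is correct and follows essentially the same approach as the paper's proof. The paper's argument is a two-sentence sketch that records only the unit condition $\Crl(\id_c)=\id_{\Crl(c)}$ and the lax comparison inclusion $\Crl(f)\circ\Crl(g)\hookrightarrow\Crl(f\circ g)$ as the component $(\Crl_*)_{f,g}$; you have spelled out the remaining bookkeeping (functoriality of $\Crl_0$ and $\Crl_1$, source/target compatibility, coherence) that the paper leaves implicit.
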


 \begin{proof} Note  that $\Crl(id_c) = id_{\Crl(c)}$. Next  observe
   that given a pair of compatible maps of submersions $c\xrightarrow{f} b
\xrightarrow{g} a$  the composite
$\Crl(f)\circ \Crl(g)$ of linear relations is a subspace of the linear
relation $\Crl(f\circ g)$.  The inclusion  $
\Crl(f)\circ \Crl(g)\hookrightarrow \Crl(f\circ g)$ is the $f,g$
component $(\Crl_*)_{f,g}$
of the desired natural transformation $(\Crl)_* $ (cf.\ Definition~\ref{def:lax-1-mor}).
  \end{proof}

  \subsection{Categories of lists}\mbox{}

 In this somewhat technical subsection we collect a number of
 definitions and facts that will provide a convenient language later
 on.  Recall that any set $X$ can be considered as a discrete category.
 Then a functor $\tau:X\to \sfC$ from a set $X$ (considered as a
 category) to a category
 $\sfC$ assigns to each element $x\in X$ an object $\tau(x)$ of
 $\sfC$.  Thus   a functor from a set  to a category $\sfC$ is an
 unordered list of elements of $\sfC$ (possibly with repetitions)
 which is indexed by the elements of the set.
 Functors from various sets to a fixed category $\sfC$ can be
 assembled into a category.

\begin{definition}[The category of lists $\Set/\sfC$ in a category
  $\sfC$]\label{def:cat_of_lists}
  Fix a category $\sfC$.  An object of the category of lists
  $\Set/\sfC$ is a functor $\tau:X\to \sfC$ where $X$ is a set thought of
  as a discrete category.   A morphism in $\Set/\sfC$ from a functor
  $\tau:X\to \sfC$ to a functor $\tau':X'\to \sfC$ is a map of sets
  $\varphi:X\to X'$  so that the triangle of functors 
\[
\xy
(-10, 10)*+{X} ="1"; 
(10, 10)*+{X'} ="2";
(0,-2)*+{\sfC }="3";
{\ar@{->}_{\tau} "1";"3"};
{\ar@{->}^{\varphi} "1";"2"};
{\ar@{->}^{\tau'} "2";"3"};
\endxy 
\]
commutes.
The composition is defined by pasting of the triangles:
\[
  \left(
\xy
(-10, 6)*+{X''} ="1"; 
(10, 6)*+{X'} ="2";
(0,-6)*+{\sfC }="3";
{\ar@{->}_{\tau''} "1";"3"};
{\ar@{<-}^{\psi} "1";"2"};
{\ar@{->}^{\tau'} "2";"3"};
\endxy 
  \right)\circ \left(
\xy
(-10, 6)*+{X'} ="1"; 
(10, 6)*+{X} ="2";
(0,-4)*+{\sfC }="3";
{\ar@{->}_{\tau'} "1";"3"};
{\ar@{<-}^{\varphi} "1";"2"};
{\ar@{->}^{\tau} "2";"3"};
\endxy 
\right) =
\xy
(-10, 6)*+{X''} ="1"; 
(10, 6)*+{X} ="2";
(0,-6)*+{\sfC }="3";
{\ar@{->}_{\tau''} "1";"3"};
{\ar@{<-}^{\psi\circ \varphi} "1";"2"};
{\ar@{->}^{\tau} "2";"3"};
\endxy 
  \]
\end{definition}

\begin{definition}[The category $\FinSet/\sfC$ of finite lists]
The category of lists $\Set/\sfC$ has a subcategory $\FinSet/\sfC$
whose objects are {\em finite} lists, i.e., functors from finite sets.
\end{definition}

\begin{remark}
If a category $\sfC$ has coproducts then there is a canonical functor
$\rotpi: \Set/\sfC \to \sfC$ defined on objects by taking colimits
\[
  \rotpi (X\xrightarrow{\tau}\sfC) := \colim (X\xrightarrow{\tau}
  \sfC ) = \bigsqcup _{x\in X}\tau (x).
\]
On arrows it is defined by the universal properties of coproducts:
given a morphism $\varphi: (X\xrightarrow{\tau} \sfC)\to
(X'\xrightarrow{\tau'} \sfC)$ the morphism $\rotpi (\varphi): \rotpi
(\tau)\to \rotpi (\tau')$ is the unique arrow making the diagram
\[
  \xy
(-10, 6)*+{\rotpi(\tau)} ="1"; 
(12, 6)*+{\rotpi(\tau')} ="2";
(-10,-6)*+{\tau(x)}="3";
(12, -6)*+{\tau'(\varphi(x)))}="4";
{\ar@{<--}_{\rotpi(\varphi)} "2";"1"};
{\ar@{->}_{\id} "3";"4"};
{\ar@{->}^{\imath_x} "3";"1"};
{\ar@{->}_{\imath_{\varphi(x)}} "4";"2"};
  \endxy
 \] 
commute.  Here $\imath_x$ and $\imath_{\varphi(x)}$  are  the
canonical inclusions.
\end{remark}

\begin{remark}\label{rmrk:2.35}
If the category $\sfC$ has finite products then there is  a canonical functor
$\Pi: (\FinSet/\sfC)^\op \to \sfC$ defined on objects by taking
limits:
\[
\Pi (\tau): = \lim (X\xrightarrow{\tau} \sfC) = \bigsqcap _{x\in X}\tau(x).
\]
On arrows it is defined by the universal property of products.  Namely, given
a morphism $\varphi: (X\xrightarrow{\tau} \sfC)\to
(X'\xrightarrow{\tau'} \sfC)$ the morphism $\Pi (\varphi): \Pi
(\tau')\to \Pi (\tau)$ is the unique arrow making the diagram
\[
  \xy
(-16, 6)*+{\Pi(\tau)} ="1"; 
(12, 6)*+{\Pi(\tau')} ="2";
(-16,-6)*+{\tau(x)}="3";
(12, -6)*+{\tau'(\varphi(x)))}="4";
{\ar@{-->}_{\Pi(\varphi)} "2";"1"};
{\ar@{<-}_{\id} "3";"4"};
{\ar@{<-}^{\pi_x} "3";"1"};
{\ar@{<-}_{\pi_{\varphi(x)}} "4";"2"};
  \endxy
 \] 
commute.  Here $\pi_x$ and $\pi_{\varphi(x)}$  are  the
canonical projections.
\end{remark}

Recall that the category $\Man$ of manifolds with corners has finite
products.  This can be seen as follows.  The terminal object is a one
point manifold.  The Cartesian product of two manifolds with corners
is again a manifold with corners and serves (together with the
canonical projections maps) as their product in the category $\Man$.
Since $\Man$ has a terminal object and binary products, it has finite products.

\begin{example} \label{ex:exPiphi1}
  Let $\sfC =\Man$, the category of manifolds with corners, $X = Y = \{1,2, 3\}$,
  $\varphi:X\to Y$ be given by
  \[
\varphi(1) = 2,\qquad \varphi(2) = 1, \qquad \varphi(3) =2.
  \] 
  Fix a manifold with corners $A$. Let $\tau, \mu: X, Y\to \Man$ be
  the constant maps defined by $\tau(j) = \mu(j) =A$ for all $j$.
  Then $\varphi: \tau \to \mu$ is a morphism in $\FinSet/\Man$ and
  $\Pi (\varphi):\Pi(\mu) = A^3 \to A^3 = \Pi(\tau)$ is given by
\[
\Pi(\varphi)\, (a_1, a_2, a_3) = (a_2, a_1, a_2)
\]
for all $(a_1,a_2, a_3) \in A^3 = \Pi(\mu)$.
\end{example}

The categories of lists $\Set/\sfC$ and $\FinSet/\sfC$ have variants
in which the commuting triangles of morphisms are replaced by
2-commuting triangles.  More precisely we have the following
definitions.

\begin{definition}[The category of lists $(\Set/\sfC)^\Rightarrow$]
\label{def:2.40}
Fix a category $\sfC$.  An object of the category of lists
$(\Set/\sfC)^\Rightarrow $ is a functor $\tau:X\to \sfC$ where $X$ is
a set thought of as a discrete category.  That is, the objects of
$(\Set/\sfC)^\Rightarrow$ are the same as the objects of $\Set/\sfC$.
A morphism in $(\Set/\sfC)^\Rightarrow$ from a functor
$\tau:X\to \sfC$ to a functor $\tau':X'\to \sfC$ is a 2-commuting
triangle
  \[
\xy
(-10, 10)*+{X} ="1"; 
(10, 10)*+{X'} ="2";
(0,-2)*+{\sfC }="3";
{\ar@{->}_{\tau} "1";"3"};
{\ar@{->}^{\varphi} "1";"2"};
{\ar@{->}^{\tau'} "2";"3"};
{\ar@{<=}_{\scriptstyle \Phi} (4,6)*{};(-0.4,4)*{}} ; 
\endxy .
\]
In other words a morphism in $(\Set/\sfC)^\Rightarrow$ is pair
$(\varphi, \Phi)$ where $\varphi:X\to X'$ is a map of sets and
$\Phi: \tau \Rightarrow \tau' \circ \varphi$ is a natural
transformation.  Given a pair of composable morphisms
$(\varphi, \Phi): (X\xrightarrow{\tau}\sfC)\to (X'\xrightarrow{\tau'}
\sfC)$ and
$(\psi, \Psi) : (X'\xrightarrow{\tau'} \sfC)\to
(X''\xrightarrow{\tau''} \sfC)$ their composition is defined by pasting
of the 2-commuting triangles.  That is,
  \[
    (\psi, \Psi)  \circ (\varphi, \Phi) :=
(\psi  \circ \varphi, (\Psi \circ \varphi) \circ _v \Phi),
    \]
where 
 $\Psi \circ \varphi: \tau \Rightarrow \tau''$ is the whiskering
of the natural transformation with a functor and $\circ_v$ denotes the
vertical composition of natural transformations.
\end{definition}

\begin{remark}
  Observe that the category $\Set/\sfC$ of lists is a subcategory of
  the category $(\Set/\sfC)^\Rightarrow$
  \end{remark}
\begin{remark} \label{rmrk:2.43}
  If the category $\sfC$ has coproducts then there is a canonical
  functor $\rotpi: (\Set/\sfC)^\Rightarrow \to \sfC$ which extends the
  functor $\rotpi: \Set/\sfC \to \sfC$.  As before to each object
  $\tau:X\to \sfC$ the functor $\rotpi$ assigns the coproduct
  $\bigsqcup _{x\in X}\tau (x)$.  On arrows $\rotpi$ is again defined
  by the universal properties of coproducts: given a morphism
  $(\varphi, \Phi): (X\xrightarrow{\tau} \sfC)\to
  (X'\xrightarrow{\tau'} \sfC)$ the morphism
  $\rotpi (\varphi, \Phi): \rotpi (\tau)\to \rotpi (\tau')$ is the
  unique arrow making the diagram
\[
  \xy
(-10, 6)*+{\rotpi(\tau)} ="1"; 
(12, 6)*+{\rotpi(\tau')} ="2";
(-10,-6)*+{\tau(x)}="3";
(12, -6)*+{\tau'(\varphi(x)))}="4";
{\ar@{<--}_{\rotpi(\varphi, \Phi)} "2";"1"};
{\ar@{->}_{\Phi_x} "3";"4"};
{\ar@{->}^{\imath_x} "3";"1"};
{\ar@{->}_{\imath_{\varphi(x)}} "4";"2"};
  \endxy
 \] 
commute.  Here as before $\imath_x$ and $\imath_{\varphi(x)}$  denote  the
canonical inclusions.
  \end{remark}

  \begin{definition}[The category $(\FinSet/\sfC)^\Leftarrow$]  The
    objects of the category $(\FinSet/\sfC)^\Leftarrow$ are finite
    lists $\tau:X\to \sfC$ (i.e., they are the same as the objects of
    the category $\FinSet/\sfC$ of finite lists in the category
    $\sfC$).  A morphism $(\varphi,\Phi): (X\xrightarrow{\tau} \sfC )
    \to (Y\xrightarrow{\mu}\sfC)$ is a pair $(\varphi, \Phi)$ where
    $\varphi: X\to Y$ is a map of sets and $\Phi$ is now a natural
    transformation from $\mu \circ \varphi$ to $\tau$.  That is, the
    morphism is a 2-commuting triangle of the form
    \begin{equation}
\xy
(-10, 10)*+{X} ="1"; 
(10, 10)*+{Y} ="2";
(0,-2)*+{\sfC }="3";
{\ar@{->}_{\tau} "1";"3"};
{\ar@{->}^{\varphi} "1";"2"};
{\ar@{->}^{\mu} "2";"3"};
{\ar@{=>}_{\scriptstyle \Phi} (4,6)*{};(-0.4,4)*{}} ; 
\endxy .
\end{equation}
The composition in $(\FinSet/\sfC)^\Leftarrow$ is defined by pasting
of the triangles.
\end{definition}
\begin{remark} \label{rmrk:2.41}
If the category $\sfC$ has finite products then the functor $\Pi:
(\FinSet/\sfC)^\op \to \sfC$ extends to a functor $\left(
  (\FinSet/\sfC)^\Leftarrow\right)^\op \to \sfC$, which we again denote by
  $\Pi$: given a morphism $(\varphi, \Phi): (X\xrightarrow{\tau} \sfC)\to
(X'\xrightarrow{\tau'} \sfC)$ the morphism $\Pi (\varphi, \Phi): \Pi
(\tau')\to \Pi (\tau)$ is the unique arrow making the diagram
\[
  \xy
(-10, 6)*+{\Pi(\tau)} ="1"; 
(12, 6)*+{\Pi(\tau')} ="2";
(-10,-6)*+{\tau(x)}="3";
(12, -6)*+{\tau'(\varphi(x)))}="4";
{\ar@{-->}_{\Pi(\varphi, \Phi )} "2";"1"};
{\ar@{<-}_{\Phi_x} "3";"4"};
{\ar@{<-}^{\pi_x} "3";"1"};
{\ar@{<-}_{\pi_{\varphi(x)}} "4";"2"};
  \endxy
 \] 
commute.  Here as before $\pi_x$ and $\pi_{\varphi(x)}$  are  the
canonical projections.
\end{remark}

\begin{example} \label{ex:exPiphi2}
  Let $\sfC =\Man$, the category of manifolds with corners, $X = Y = \{1,2, 3\}$.
  Fix two  manifolds with corners $A$ and $B$ and   a smooth map $s:A\to B$.  Let $\tau, \mu: X, Y\to \Man$ be
  the constant maps defined by $\tau(j) = B$, $\mu(j) =A$ for all
  $j$.    We define a morphism $(\varphi,
  \Phi): \tau \to \mu$ in $(\FinSet/\Man)^\Leftarrow$ as follows.
As before we define $\varphi$ by 
  \[
\varphi(1) = 2,\qquad \varphi(2) = 1, \qquad \varphi(3) =2.
\]
We define $\Phi_i: \mu (\varphi(i)) = A \to \tau(i) = B$ to be the map
$s:A \to B$ for all $i$.
  Then $(\varphi,\Phi): \tau \to \mu$ is a morphism in $(\FinSet/\Man)^\Leftarrow$ and
  $\Pi (\varphi, \Phi):\Pi(\mu) = A^3 \to B^3 = \Pi(\tau)$ is given by
\[
\Pi(\varphi,\Phi)\, (a_1, a_2, a_3) = (s(a_2), s(a_1), s( a_2))
\]
for all $(a_1,a_2, a_3) \in A^3 = \Pi(\mu)$.
\end{example}

\mbox{}\\

\section{The category of elements $\int  F$ for  a functor
  with values in linear relations}\label{sec:cat_of_el}

Given a set-valued functor $F:\sfC \to \Set$ on a category $\sfC$
there is a well-known construction due to Grothendieck that produces
a {\sf category of elements}  $\int F $ together with the functor $\pi_F: \int
F\to \sfC$.  Recall that the objects of the
category $\int F$ are pairs $(c, x)$ where $c$ is an object of $\sfC$
and $x$ is an element of the set $F(c)$.  A morphism in $\int F$ from
$(c, x)$ to $(c', x')$ is a morphism $h:c\to c'$ in $\sfC$ such that
$F(h)x = x'$.  The functor $\pi_F:\int F\to \sfC$  maps an
arrow $h:(x,c)\to (x',c')$ of $\int F$ to the arrow $h:c\to c'$ of $\sfC$.  The
functor $\pi_F: \int F \to \sfC$ has nice lifting properties.

It will be useful for us to have a generalization of this construction
to functors with values in the 2-category $\RelVect$ of vector spaces
and linear relations. 
Namely suppose $\sfC$ is a category and we are given a (lax) functor
$F: \sfC \to \RelVect$.  That is, suppose that for any pair of
composable arrows $c''\xleftarrow{g} c'\xleftarrow{h}c$ in $\sfC$ we
have an inclusion $F(g) \circ F(h) \subset F(g\circ h)$.  We then can
define the ``category of elements'' $\int F$ and a functor
$\pi_F:\int F\to \sfC$ (see below).  In general the functor $\pi_F$ has no evident lifting
properties.  In the next section we will realize the category $\HyDS$
of hybrid dynamical systems as the category of elements for a lax
$\RelVect$-valued functor.  Later we will use the construction to
produce the category $\HyOS$ of hybrid open systems.

\begin{definition}[The category of elements $\int F$ of a lax  functor $F: \sfC
  \to \RelVect$] \label{def:cat_of_elements}
  Let $\sfC$ be a category and $F:\sfC \to \RelVect$ a lax 2-functor
  with values in the 2-category $\RelVect$ of linear relations.
  We define the {\sf category of elements $\int F$} of the functor $F$ as follows.
\begin{enumerate}
\item The objects of $\int F$ are pairs $(c, x)$ where $c$ is an
  object of $\sfC$ and $x$ is a vector in the vector space $F(c)$.
 \item A morphism from $(c,x)$ to $(c',x')$ is a morphism $h: c\to c'$
   such that $(x,x')$ is an element of the linear relation $F(h) \subset
   F(c)\times F(c')$. 
 \end{enumerate}
 It is easy to see that $\int  F$ is a category.  We also have  a
 functor $\pi_F: \int F\to \sfC$ which is defined by
\[
\pi_F ((c,x)\xrightarrow{h} (c',x') ) = c\xrightarrow{h}c'.
\]
\end{definition}

\begin{remark}
Recall that continuous time dynamical systems form a category $\DS$.
The objects of this category are pairs $(M, X)$ where $M$ is a
manifold with corners and $X\in \scrX(M)$ is a vector field on $M$.  A
morphism from a dynamical system $(M,X)$ to a system $(N,Y)$ is a so
called ``semi-conjugacy'': a smooth map $f:M\to N$ so that $Tf\circ X
= Y\circ f$.
  \end{remark}

 \begin{example}[Continuous time dynamical systems from the vector
   field functor $\scrX:\Man\to \RelVect$]\label{ex:3.3}
Consider the category $\Man$ of manifolds with corners.   The
map $\scrX$ that assigns to every manifold $M$ the vector space
$\scrX(M)$ of vector fields on $M$ extends to a lax functor $\scrX: \Man
\to \RelVect$: given a smooth map $f:M\to N$ the relation $\scrX(f)$
is, by definition
\[
\scrX(f) := \{ (X,Y)\in \scrX(M)\times \scrX(N) \mid Y \circ f = Tf \circ X\}.
\]
The category of elements $\int \scrX$ is 
the category $\DS$ of continuous
time dynamical systems.
\end{example}

\begin{remark}
The category $\OS$ of (continuous time) open systems is less well
known than the category $\DS$ but it has appeared in literature. See
for example \cite{TP} where the category of open systems is called
$\mathsf{Con}$.
The objects of the category $\OS$ are pairs $(a, f)$ where $a=(p_a: a_\tot
\to a_\st)$ is a surjective submersion and $f:a_\tot \to Ta_\st$ is an
open system.
 \end{remark}  

\begin{example} [The category $\OS$ of open systems from the functor
  $\Crl:\SSub \to \RelVect$] \label{ex:3.4}
Consider the category $\SSub$ of surjective submersions
(Notation~\ref{not:ssub}).  Recall that objects of $\SSub$ 
are surjective submersions $a = (p_a: a_\tot \to a_\st)$ of manifolds
with corners and that a morphism
from a submersion $a$ to a submersion $b$ is, by definition, a pair of
smooth maps $f_\tot :a_\tot \to b_\tot$, $f_\st:a_\st\to b_\st$ such
that the diagram
\[
\xy
(-10, 10)*+{a_\tot} ="1"; 
(10, 10)*+{b_\tot} ="2";
(-10,-10)*+{a_\st }="3";
(10, -10)*+{b_\st}="4";
{\ar@{->}_{p_a} "1";"3"};
{\ar@{->}^{p_b} "2";"4"};
{\ar@{->}^{f_\tot} "1";"2"};
{\ar@{->}_{f_\st} "3";"4"};
\endxy
\]
commutes.

For every surjective submersion $a$ we have the vector space $\Crl(a)$
of  open systems on $a$ (Definition~\ref{def:open_sys}).
For each map of
submersions $f:a\to b$,  there is a linear relation
$\Crl(f) \subset \Crl(a)\times \Crl(b)$ consisting of $f$-related
control systems (Definition~\ref{def:Crl(f)}).
The corresponding category of elements $\int  \Crl$ is   the category
$\OS$ of
open (control) systems.
\end{example}
\begin{remark}
Note that there is a canonical embedding $\imath: \Man \to \SSub$. On
objects it is given by $\imath(M) = (M\xrightarrow{\id_M} M)$.  Note that the
functors $\scrX$ and $\Crl$ are compatible with the embedding
$\imath$:
\begin{equation}
\Crl \circ \imath = \scrX.
\end{equation}
In particular we can consider every vector field $X:M\to TM$ on a
manifold $M$ as a control system on the submersion $\id_M:M\to M$.  We
think of vector fields as control systems with no inputs, that is, as
closed systems.  Thus, somewhat paradoxically, every closed system is
an open system (with no inputs from ``the outside'').
\end{remark}

We end the section with another example.  Its usefulness at the moment
may not be apparent and a reader is welcome to skip it for the time
being. Later on we will generalize
this example to hybrid systems.  See Example~\ref{ex:hds_on_product} below.

\begin{example}\label{rmrk:vf_on_product} In this example we show that
a vector field $X$ on a product of two manifold $M_1\times M_2$ is the
result of interconnection of two open systems.  That is, we claim that
given a vector field $X\in \scrX(M_1\times M_2)$ there exist two open
systems $(M_1\times M_2\to M_1, X_1: M_1\times M_2\to TM_1)$,
$(M_1\times M_2\to M_2, X_2: M_1\times M_2\to TM_2)$ and an 
interconnection morphism (Definition~\ref{def:ssub})
\[
\varphi:  (M_1\times M_2
\xrightarrow{\id}  M_1\times M_2) \to (M_1\times M_2\to M_1)\times (M_1\times M_2\to M_2)
\]
so that $\varphi^*(X_1\times X_2) = X$ where
\[
\varphi^*:
\Crl\left((M_1\times M_2\to M_1)\times (M_1\times M_2\to M_2) \right)\to \Crl(M_1\times M_2
\xrightarrow{\id}  M_1\times M_2) = \scrX(M_1\times M_2)
\]
is the linear
map induced by $\varphi$ (see Definition~\ref{def:2.35}).

This can be seen as follows.
   The vector field $X:M_1\times M_2 \to TM_1\times TM_2$ is of the form $X = (X_1,
X_2)$. The maps $X_1:M_1\times M_2\to TM_1$, $X_2:M_1\times M_2\to TM_2$
are open systems on the submersions $M_1\times M_2\to M_1$, $M_1\times
M_2\to M_2$, respectively.   The map 
\[
  \varphi: (M_1\times M_2 \to M_1\times M_2) \to \left(M_1\times M_2\to M_1
  \right) \times \left(M_1\times M_2\to M_2)
    \right)
   \]
with $\varphi_\st = \id_{M_1\times M_2}$ and $\varphi_\tot:M_1\times
M_2 \to (M_1\times M_2)\times (M_1\times M_2) $ given by
\[
\varphi_\tot (m_1, m_2) = ((m_1, m_2), (m_1, m_2)).
\]
is an interconnection map.  Furthermore
  \[
X(m_1, m_2) = (X_1 (m_1, m_2), X_2 (m_1, m_2)) = ((X_1\times X_2) \circ
\varphi_\tot )\, (m_1, m_2)
\]
for all $(m_1, m_2) \in M_1\times M_2$. 

We conclude that $X= (X_1,X_2) $ is a vector field on the product
$M_1\times M_2$ if and only if $X = \varphi^* (X_1\times X_2)$ where
$X_1, X_2$ are open systems, $\varphi$ is the interconnection map
defined above and $\varphi^*:\Crl \left(M_1\times M_2\to M_1
  \right) \times \left(M_1\times M_2\to M_2) \right)\to \Crl (M_1\times M_2
    \to M_1\times M_2) = \scrX(M_1\times M_2)$ is the linear map
    induced by $\varphi$.
\end{example} 
\mbox{}

\section{A category of hybrid phase spaces $\HyPh$} 

We now construct the category $\HyPh$ of hybrid phase spaces.  We have
a number of reasons for introducing this notion.  First of all,
traditional definitions of hybrid dynamical systems involve a lot of
data.  We would like to organize these data in a compact and
structured way.  Secondly, in the next section we will need to define
hybrid open systems.  There seems to be no consensus in the literature
of what a hybrid open system should be.  Our approach is to view
hybrid open systems as analogous to continuous time open systems.  As
we mentioned in the previous section, it is convenient to view a
continuous time open system as a pair $(a, F)$ where
$a = a_\tot\to a_\st$ is a surjective submersion and
$F:a_\tot \to Ta_\st$ is an open system on the submersion.  We will
define a hybrid open system to be a pair $(a, F)$ where now
$a = a_\tot \xrightarrow{p_a} a_\st$ is a hybrid surjective
submersion, that is, a certain map of hybrid phase spaces, and
$F:\UU(a_{tot})\rightarrow T\UU(a_{st})$ is a continuous time open
system on an associated surjective submersion $\UU (a)$ (see
Definition~\ref{def:HyOS}).  Additionally we want our definition of a
hybrid phase space to meet a number of requirements and pass a few
sanity tests.  Before we spell them out, we would like  to have
two examples of traditionally defined hybrid dynamical systems before
us. For the reader's convenience a traditional definition of a hybrid
dynamical system is reviewed in Appendix~\ref{app:B}.

\begin{example}[A room with a heater and a thermostat]
\label{ex:1.1}
Imagine a one room house in winter.  The room has a heater and a
thermostat.  For convenience we choose the units of temperature so
that the comfort range in the room falls between 0 and 1 (say
$0 = 18^\circ C$ and $1= 20^\circ C$).  Suppose the room starts at the
temperature $x=1$ and cools down to $x=0$.  Assume that the evolution
of temperature is governed by the equation $\dot{x} = -1$.  Once the
temperature drops down to 0 the thermostat turns on the heater and the
temperature evolution is now governed by $\dot{x} = 1$.  The dynamical
system we have just described is one of the simplest examples of a
hybrid dynamical system.  Formally the system consists of the disjoint
union of two manifolds with boundary
$M_{\textrm{on}}=M_{\textrm{off}} =[0,1]$ with a vector field $X $ on
$M = M_{\textrm{on}}\sqcup M_{\textrm{off}}$ defined by
$X|_{M_{\textrm{off}}} = -\frac{d}{dx}$ and
$X|_{M_{\textrm{on}}} = \frac{d}{dx}$.  Additionally we have two partial
functions (see Notation~\ref{notation:rel}):
$f:M_{\textrm{off}} \to M_{\textrm{on}}$ which takes
$0\in M_{\textrm{off}}$ to $0\in M_{\textrm{on}}$ (and is undefined
elsewhere) and $g:M_{\textrm{on}} \to M_{\textrm{off}}$ which takes
$1\in M_{\textrm{on}} $ to $1 \in M_{\textrm{off}}$ (and is undefined
elsewhere).  The labelled directed graph $ \xy
(-10,0)*{\bullet}="1"; 
(10,0)*{\bullet}="2"; 
(-15,0)*+{M_{\textrm{on}}}="3";
(15,0)*+{M_{\textrm{off}}}="4";
{\ar@/^.5pc/^{g} "1";"2"};
{\ar@/^.5pc/^{f} "2";"1"};
\endxy
$ may be useful for picturing the system and its discrete dynamics.
\end{example}

\begin{example}[Two rooms with heaters and thermostats] \label{ex:1.2}
Now imagine that we have a two room house with two heaters and two thermostats.   
 The dynamics now becomes more complicated since each heater is controlled by its own thermostat and the thermostats need not be in sync.  For example, room one may reach 0 first.  Then its heater will turn on. 
Once the first room heats up to temperature 1,  heater 1 will be turned off.   By this time the second room may or may not be at 0.   If the second room is above zero,  it will continue to cool.  Since its temperature is lower than that of the first room it may reach 0 before the temperature in the first room does.  Then its heater will be turned on and so on.    Many other scenarios are also possible. To describe all the possible  dynamics of this system quantitatively we will need to consider the product 
\[
( M^1_{\textrm{on}}\sqcup M^1_{\textrm{off}} ) \times ( M^2_{\textrm{on}}\sqcup M^2_{\textrm{off}}) = 
\bigsqcup_{\alpha, \beta\in \{\textrm{on}, \textrm{off}\}} M^1_\alpha \times M^2 _\beta
\]
On each product $M^1_\alpha \times M^2 _\beta =[0,1]^2$ (which is a manifold with corners, see Appendix~\ref{app:A}) we would have a vector field $X_{\alpha, \beta}$. 
If the two rooms are completely thermally isolated from each other then each $X_{\alpha, \beta} $ is a product of vector fields on the corresponding factors ($M^1_\alpha$ and $M^2_\beta$).  If there is a heat exchange between the rooms  the vector fields $X_{\alpha, \beta} $ would have to be more complicated (recall that a vector field on the product of two manifolds is rarely a product of vector fields on the factors).
 We will also have 12 partial maps between the various products $M^1_\alpha \times M^2 _\beta$.   For example we have a map 
\[
f_1 \times id_{M^2_{\on}}: M^1_\on \times M^2_\on \to M^1_\off \times M^2_\on
\]
defined on $\{1\}\times [0,1)\subset M^1_\on \times M^2_\on$.  It models the fact that when the temperature in room 1 reaches 1 and the temperature in room 2 is below 1, the heater in room 1 turns off while the heater in room 2 keeps going.   The sources and targets of the  partial maps can be pictured by the following graph:
\begin{equation} \label{eq:graph_product}
\xy
(-30,-23)*{M^{(1)}_{\textrm{on}} \times M^{(2)}_{\textrm{on}} }; 
(30,-23)*{M^{(1)}_{\textrm{on}} \times M^{(2)}_{\textrm{off}} }; 
(-30,23)*{M^{(1)}_{\textrm{off}} \times M^{(2)}_{\textrm{on}} }; 
(30,23)*{M^{(1)}_{\textrm{off}} \times M^{(2)}_{\textrm{off}} }; 
(-20,-20)*{\bullet}="1"; 
(-20,20)*{\bullet}="2"; 
(20,20)*+{\bullet}="3";
(20,-20)*+{\bullet}="4";
{\ar@/^.5pc/^{} "1";"2"};
{\ar@/^.5pc/^{} "2";"1"};
{\ar@/^.5pc/^{} "1";"3"};
{\ar@/^.5pc/^{} "3";"1"};
{\ar@/^.5pc/^{} "1";"4"};
{\ar@/^.5pc/^{} "4";"1"};
{\ar@/^.5pc/^{} "2";"3"};
{\ar@/^.5pc/^{} "2";"4"};
{\ar@/^.5pc/^{} "3";"2"};
{\ar@/^.5pc/^{} "3";"4"};
{\ar@/^.5pc/^{} "4";"3"};
{\ar@/^.5pc/^{} "4";"2"};
\endxy
\end{equation}
\end{example}
\mbox{}\\[12pt]

Here are the desiderata for a definition of  a hybrid phase space that
we will introduce in this section.
  \begin{itemize}
\item Any hybrid dynamical systems should be a pair $(a,X)$ where $a$
  is a hybrid phase space and $X$ is a vector field on the manifold
  ``underlying'' $a$.  For example the manifold underlying  hybrid phase space of
  Example~\ref{ex:1.1} should be the disjoint union  $[0,1]\sqcup
  [0,1]$ of two copies of the closed interval $[0,1]$.
  
  \item Hybrid phase spaces should form a category; we denote it by
    $\HyPh$.   The assignment of the underlying manifold to a hybrid
    phase space should be functorial.  That is, there should be a
    functor $\UU: \HyPh\to \Man$ from the category $\HyPh$ of hybrid
    phase spaces to the category of manifolds with corners.

   \item The category $\HyPh$ should have finite products that behave
     ``correctly.''  In particular the hybrid phase space of
     Example~\ref{ex:1.2} should be the product of two copies of the  hybrid phase
     space of Example~\ref{ex:1.1}.

   \item  Recall that the category of continuous time dynamical
     systems is the category of elements of the functor $\scrX: \Man
     \to \RelVect$ (the functor $\scrX$  assigns to a manifold $M$ the space of
     vector fields on $M$).  The category $\HyDS$
       of hybrid dynamical systems should be the category of elements
       of the  composite functor $\scrX\circ \UU: \HyPh\to \RelVect$.
       In particular a hybrid dynamical system should be a pair $(a,
       X)$ where $a$ is a hybrid phase space and $X$ is a vector field
       on the underlying manifold with corners $\UU(a)$.
              
     \item Executions of hybrid dynamical systems should be morphisms
       in the category $\HyDS$, and morphisms of hybrid dynamical
       systems should take executions to executions.    Note that
       since we are dealing with a category of elements, morphisms in
       $\HyDS$ should  morphisms of hybrid phase spaces satisfying
       appropriate conditions.  In particular executions should be
       morphisms of hybrid phase spaces as well.
     \end{itemize}

 Here is a brief explanation of the desirability  of the last item.  It is analogous to the
 following fact about continuous time dynamical systems.  An integral
 curve of a vector field $X$ on a manifold $M$ is a smooth map
 $\gamma:I\to M$, where $I$ is an interval, subject to the condition
 that
 \[
(T\gamma)_s \left( \frac{d}{dt}\right) = X (\gamma(s))
   \]
for all times $s\in I$.  Here as elsewhere in the paper $T\gamma:
TI\to TM$ is the differential of $\gamma$.    We therefore can view an
integral curve of a vector field $X$ as a smooth map from an interval
to the manifold that relates the constant vector field $\frac{d}{dt}$
and the vector field $X$.   Moreover if $f:M\to N$ is a smooth map
between manifolds, $Y$ is a vector field on $N$ which is $f$-related
to a vector field $X$ on $M$ and $\gamma:I\to M$ is an integral curve
of the vector field $X$ then by the chain rule $f\circ \gamma$ is an integral curve of
the vector field $Y$.   We want an analogous result to hold for
hybrid dynamical systems.

Thus an execution of a hybrid dynamical system $(a, X)$ should be a
map from a model ``hybrid time'' dynamical system
$(\scrI, \partial)$ (a hybrid analogue of an interaval $I \subset \R$
with the vector field $\frac{d}{dt}$) to the system $(a, X)$.
     
We construct the category $\HyPh$ of hybrid phase spaces by
categorifying  the category  of lists
  $(\Set/\sfC)^\Rightarrow$: we   replace the category $\Set$ of sets
  with the 2-category $\Cat$ of small categories and $\sfC$ with the
  double category $\RelMan^\Box$ of manifolds with corners, relations
  and smooth maps.  

  To motivate our definition consider a traditional definition of a
  hybrid dynamical system:
  Definition~\ref{def:hds_trad}.  Traditionally a hybrid dynamical
  system  consists of the following data: a directed
  graph $A =\{A_1\toto A_0\}$, an assignment $A_0 \ni y\mapsto R_y$ of
  a manifold with corners to each vertex $y$ of $A$, an assignment
\[
(x\xrightarrow{\gamma}y )\mapsto (R_x \xrightarrow{R_\gamma} R_y)
  \]
  of a relation for each arrow $\gamma$ of $A$ and an assignment of a vector field
  $X_y$ on each manifold $R_y$.  We can view the collection
  $\{X_y\in \scrX(R_y)\}_{y\in A_0}$ of vector fields 
   as a single vector
  field $X$ on the disjoint union $\bigsqcup_{y\in A_0} R_y$.  We can
  view the assignments $y\mapsto R_y$, $\gamma \mapsto R_\gamma$ as a
  single map of graphs.  The source of this a map is the graph $A$.
  The target is the graph $U(\RelMan)$ which is the graph  underlying the category
  $\RelMan$ of manifolds with corners and relations, see
  Remark~\ref{rmrk:RelMan}.   Concretely  the vertices of the graph $U(\RelMan)$
  are manifolds with corners and the arrows are the set-theoretic
  relations. We ignore the 2-category structure of
  $\RelMan$ for the time being.  Thus a hybrid dynamical system is a
  pair
  $(A\xrightarrow{R} U(\RelMan), X\in \scrX(\bigsqcup_{y\in A_0}
  R_y))$.
   It seems reasonable at this point to
  define the phase space of a hybrid dynamical system
  $(A\xrightarrow{R} U(\RelMan), X\in \scrX(\bigsqcup_{y\in A_0} R_a))$
  to be the map of graphs $R:A\to U(\RelMan)$.  Unfortunately
  Examples~\ref{ex:1.1} and \ref{ex:1.2} indicate that this is not
  quite right.  The issue is that the product of the graph
\begin{equation}
  A= \xy
(-10,0)*{\bullet}="1"; 
(10,0)*{\bullet}="2";
{\ar@/^.5pc/ "1";"2"};
{\ar@/^.5pc/ "2";"1"};
\endxy
\end{equation} with itself is the graph 
\[ A\times A = \qquad \xy
(-20,-20)*{\bullet}="1"; 
(-20,20)*{\bullet}="2"; 
(20,20)*+{\bullet}="3";
(20,-20)*+{\bullet}="4";
{\ar@/^.5pc/^{} "1";"3"};
{\ar@/^.5pc/^{} "3";"1"};
{\ar@/^.5pc/^{} "2";"4"};
{\ar@/^.5pc/^{} "4";"2"};
\endxy
\]
and not the graph
\begin{equation}
B = \qquad  \xy
(-20,-20)*{\bullet}="1"; 
(-20,20)*{\bullet}="2"; 
(20,20)*+{\bullet}="3";
(20,-20)*+{\bullet}="4";
{\ar@/^.5pc/^{} "1";"2"};
{\ar@/^.5pc/^{} "2";"1"};
{\ar@/^.5pc/^{} "1";"3"};
{\ar@/^.5pc/^{} "3";"1"};
{\ar@/^.5pc/^{} "1";"4"};
{\ar@/^.5pc/^{} "4";"1"};
{\ar@/^.5pc/^{} "2";"3"};
{\ar@/^.5pc/^{} "2";"4"};
{\ar@/^.5pc/^{} "3";"2"};
{\ar@/^.5pc/^{} "3";"4"};
{\ar@/^.5pc/^{} "4";"3"};
{\ar@/^.5pc/^{} "4";"2"};
\endxy \quad 
.
\end{equation}
On the other hand the phase space of the hybrid dynamical system of
Example~\ref{ex:1.2} should be the product two copies of the phase
space of the system in Example~\ref{ex:1.1}.  We choose the following
solution to the problem.  Recall that the forgetful functor
$U:\CAT \to \Graph$ from the category of (large) categories to the
categories of graphs is part of the free/forgetful adjunction
$\Free:\Graph \leftrightarrows \CAT:U$.  It is easy to see that
\[
\Free(A)\times \Free(A) = \Free(B)
\]
for the graphs $A$ and $B$ above.
So we now provisionally (re)define a hybrid phase space to be a functor $R: \Free(\Gamma) \to
\RelMan$ from the free category on some graph $\Gamma$ to the (2-) category
$\RelMan$ of manifolds with corners and relations.

We believe that our solution captures the following idea.  Suppose
that we have a hybrid dynamical system that consists of two
interacting subsystems.  Then its underlying phase spaces is a product
of two hybrid phase spaces which we call $a_1$ and $a_2$.  An execution of
the big system is, in particular, a map $\sigma$ from a hybrid time
interval $\scrI$ to the product $a_1\times a_2$.  Hence $\sigma =
(\sigma_1, \sigma_2): \scrI\to a_1\times a_2$.  It may happen
that $\sigma_1$ is forced to undergo a discrete transition at time $t$
while $\sigma_2$ at $t$ is evolving continuously.  Therefore given an
arrow $\gamma$ in the graph associated with the first subsystem and
the vertex $b$ of the second subsystem the graph of the product system
needs to have an arrow that corresponds to the pair $(\gamma, b)$.
The construction we chose accomplishes exactly that by assigning to
each vertex $b$ of the second graph the identity arrow $\id_b$ and
interpreting the pair $(\gamma, b)$ as the pair of arrow
$(\gamma, \id_b)$ in the product.

But why stick with free categories?
We may as well (again, provisionally) define a hybrid phase space to
be a functor $a:S_a\to \RelMan$ from some small category $S_a$ to the
category $\RelMan$ of manifolds with corners and relations.  Note that
given a functor $a:S_a\to \RelMan$ there is an underling manifold
$\UU(a): = \bigsqcup_{x\in (S_a)_0} a(x)$ where the coproduct is taken
over the set $(S_a)_0$ of objects of the small category $S_a$.
Consequently we can define a hybrid dynamical system to be a pair
$(S_a\xrightarrow{a}\RelMan, X\in \scrX(\bigsqcup_{x\in (S_a)_0}
a(x)))$.  We will revise the definition of a hybrid phase space and of
a hybrid dynamical system one more time in order to obtain a slicker
definition of morphisms between two hybrid phase spaces.  We will then
show that there is a product preserving forgetful functor
$\UU:\HyPh\to \Man$ from the category of hybrid phase spaces to the
category of manifolds with corners.

\begin{remark}
  There are alternatives to the definition of a hybrid phase space as
  a functor from some small category into $\RelMan$.  The first
  alternative is to use reflexive graphs instead of categories.
  Recall that a reflexive graph is a directed graph that in addition to the source
  and target maps $\frs$, $\frt$ from arrows to nodes also has a unit map
  $\fru$ from nodes to arrows.  Moreover the unit map $\fru$ is a section of
  both $\frs$ and of $\frt$.  In other words, every reflexive graph
  assigns to each vertex $b$ an ``identity arrow'' $\id_b$ whose
  source and target are $b$.  Unlike categories reflexive graphs have
  no composition map.  One can show that products of reflexive graphs
  behave the way we would want them to behave in our examples.

Another alternative is to use labelled transition systems as
the source of our map into $\RelMan$.  Unfortunately expressing
parallel composition of labelled transition systems in the category
theoretic language is  awkward; see \cite{WN}. 
 \end{remark} 

 We are now in position to formulate a definition of a  hybrid phase
 space.  We will later reformulate this definition to make it more
 succinct (Definition~\ref{def:hyph}).

\begin{definition}  A {\sf hybrid phase space} is a
  functor $a:S_a\to \RelMan$ from some category $S_a$ to the 1-category
  $\RelMan$ of manifolds with corners and set-theoretic relations.
  
A {\sf  map} from a hybrid phase space $a:S_a\to \RelMan$ to a hybrid
phase space $b:S_b\to \RelMan$ is a functor $\varphi:S_a\to S_b$
together with a collection of smooth maps $\{f_x:a(x)\to
b(\varphi(x))\}_{x\in (S_a)_0}$ so that for each arrow
$x\xrightarrow{\gamma}y$ of the category $S_a$ the diagram 
\[
\xy
(-15,10)*+{a(x)}="1";
(-15,-10)*+{b(\varphi(x))}="2";
(15, 10)*+{a(y)}="3";
(15, -10)*+{b(\varphi(y))}="4";
{\ar@{->}_{f_x} "1";"2"};
{\ar@{->}^{a(\gamma)} "1";"3"};
{\ar@{->}_{b(\varphi(\gamma))} "2";"4"};
{\ar@{->}^{f_y} "3";"4"};
{\ar@{=>}^{} (0,3); (0,-3)};
\endxy 
\]
is a 2-cell in the double category $\RelMan^\Box$.  In other words  we
require that for each $x\xrightarrow{\gamma}y\in (S_a)_1$ the smooth maps
\[
f_x\times f_y:a(x)\times a(y) \to b(\varphi(x))\times b(\varphi(y))
\]
map the relation $a(\gamma) \subset a(x)\times a(y)$ to the relation
$b(\varphi(\gamma))\subset b(\varphi(x))\times b(\varphi(y))$.
 \end{definition} 
It is not hard to show that  maps $(\varphi, f):a\to b$ and
$(\psi, g):b\to c$ of hybrid phase  spaces can be composed.  We set
\[
(\psi, g)\circ (\varphi, f):= (\psi\circ \varphi,
h)
\]
where
\[
h_x := g_{\varphi(x)} \circ f_x
  \]
for every object $x$ of the category $S_a$.  

 Hybrid phase  spaces and their maps form a
category   that we denote by $\HyPh$. 
Moreover the same argument as in Remark~\ref{rmrk:2.43} shows that 
the assignment
\[
(S_a\xrightarrow{a} \RelMan)\mapsto \UU(a):= \bigsqcup _{x\in
  (s_a)_0} a(x)
  \]
  extends to a functor
  \[
\UU: \HyPh \to \Man
    \]
    from the category of hybrid phase spaces to the category of
    manifolds with corners.  The functor $\UU$ forgets the reset
    relations.  We will give another description of the functor
    $\UU:\HyPh \to \Man$ in Remark~\ref{rmrk:4.8}.

In order to make our definition of the category $\HyPh$ of hybrid
phase spaces more compact we  categorify the definition of the category of lists
$(\Set/\sfC)^\Rightarrow$.   Namely we  replace $\sfC$ with a double category
$\DD$ and replacing $\Set$ with the category of small categories
$\Cat$.
\begin{remark}
Let $\D$ be a double category and $\sfC$ an ordinary small
category, that is, an object of $\Cat$.  A strict double functor
$f:\sfC \to \D$ then assigns to each object $x\in \sfC$ the vertical
identity arrow $\id_{f(x)}$ on an object $f(x) \in \D_0$.  To each
arrow $x\xrightarrow{\gamma}y$ of $\sfC$ the functor $f$ assigns the
identity 2-cell $\id_{f(\gamma)}: f(\gamma) \to f(\gamma)$
from the horizontal arrow $f(\gamma):f(x)\to f(y)$ to itself.   Given
two double functors $f,g:\sfC\to \DD$ a vertical transformation
$\alpha:f\Rightarrow g$ assigns to each object $x$ of $\sfC$ a vertical
1-cell $(\alpha_0)_x:f(x)\to g(x)$ and to each arrow
$x\xrightarrow{\gamma} y $ of $\sfC$ a 2-cell
\[
\xy
(-15,10)*+{f(x)}="1";
(-15,-10)*+{g(x)}="2";
(15, 10)*+{f(y))}="3";
(15, -10)*+{g(y)}="4";
{\ar@{->}_{(\alpha_0)_x} "1";"2"};
{\ar@{->}^{f(\gamma)} "1";"3"};
{\ar@{->}_{g(\gamma)} "2";"4"};
{\ar@{->}^{(\alpha_0)_y} "3";"4"};
{\ar@{=>}^{(\alpha_1)_\gamma} (0,3); (0,-3)};
\endxy
\]
\end{remark}
We now record a generalization of Definition~\ref{def:2.40}.

\begin{definition}[The categorified category of lists $(\Cat/\DD)^\Rightarrow$]
\label{def:3.double}
  Fix a double category $\DD$.  An object of the categorified category of lists
  $(\Cat/\DD)^\Rightarrow $ is a double functor $\tau:\sfC \to \DD$
  where $\sfC$ is a small category  thought of
  as a discrete double  category.  A morphism in $(\sfC/\DD)^\Rightarrow$ from a double functor
  $\tau:\sfC\to \DD$ to a double functor $\tau':\sfC'\to \DD$ is a
  commuting triangle of the form
  \[
\xy
(-10, 10)*+{\sfC} ="1"; 
(10, 10)*+{\sfC'} ="2";
(0,-2)*+{\DD }="3";
{\ar@{->}_{\tau} "1";"3"};
{\ar@{->}^{\varphi} "1";"2"};
{\ar@{->}^{\tau'} "2";"3"};
{\ar@{<=}_{\scriptstyle \Phi} (4,6)*{};(-0.4,4)*{}} ; 
\endxy
    \]
where 
  $\varphi:\sfC\to \sfC'$ is a double functor and $\Phi: \tau \Rightarrow \tau'
  \circ \varphi$ is a vertical transformation.
    Given a pair of
  composable morphisms $(\varphi, \Phi): (\sfC\xrightarrow{\tau}\DD)\to
  (\sfC'\xrightarrow{\tau'} \DD)$ and $(\psi, \Psi) : (\sfC'\xrightarrow{\tau'} \DD)\to 
  (\sfC''\xrightarrow{\tau''} \DD)$ their composite is defined by
  pasting of the 2-commuting triangles.  That is, the composite is the pair $(\psi
  \circ \varphi, \Xi)$ where 
\[
\Xi = (\Psi \circ \varphi) \circ _v \Phi.
\] 
 Here $\Psi \circ \varphi: \tau \Rightarrow \tau''$ is the whiskering
of the vertical transformation with a double functor and $\circ_v$ denotes the
vertical composition of vertical transformations (Remark~\ref{rmrk:2.29}).
\end{definition}

\begin{definition}[A category $\HyPh$ of hybrid phase
  spaces] \label{def:hyph}
We define a    category $\HyPh$ {\sf of hybrid phase spaces}  to be
the categorified category of lists $(\Cat/\RelMan^\Box)^\Rightarrow$
(see Definition~\ref{def:3.double}) where $\Cat$ is the category of
small categories and $\RelMan^\Box$ is the double category of
manifolds with corners, smooth maps and arbitrary relations
(Definition~\ref{def:relmanbox}).  Thus
\[
\HyPh:= (\Cat/\RelMan^\Box)^\Rightarrow.
\]
More concretely the objects of $\HyPh$ are double functors $a:S_a\to
\RelMan^\Box$ (where the small category $S_a$ is considered  a
discrete double category).  A morphism from a hybrid phase space $a$
to a hybrid phase space $b$  is a
pair $(\varphi, \Phi)$ where $\varphi: S_a\to S_b$ is a functor between
two discrete double categories (i.e., an ordinary functor) and $\Phi:
a\Rightarrow b\circ \varphi$ is a vertical transformation.
\end{definition}

\begin{remark}
  Unless noted otherwise we assume that our hybrid phase spaces are
  ``nonempty'': given a hybrid phase space $a:S_a\to \RelMan^\Box$ we
  assume that the category $S_a$ is nonempty and that for every object
  $x$ of $S_a$ the manifold with corners $a(x)$ is also
  nonempty. \end{remark}

\begin{remark}\label{rmrk:4.8}
We now give another description of the functor $\UU: \HyPh\to \Man$.
Recall that there is a forgetful functor $U:\Cat\to \Set$.   To a
small category $\sfC$ the functor $U$ assigns its set $\sfC_0$ of objects. To a 
functor $\sfC \xrightarrow{f} \sfD$ between two small categories it
assigns the map $\sfC_0\xrightarrow{f_0} \sfD_0$ on objects.  Note
next that if $a:S_a\to \RelMan^\Box$ is a double functor from a small
category $S_a$ then the $a_0$ component of $a$ is an unordered list of
manifolds $a_0: (S_a)_0 \to \Man$.  Hence we can view
$a_0$ as an object of the category $(\Set/\Man)^\Rightarrow$.
Given a morphism $(\varphi, \Phi): a\to b$ in $(\Cat/\RelMan^\Box)$
the pair   $(\varphi_0, \Phi_0)$ is a morphism in
$(\Set/\Man)^\Rightarrow$ from $a_0$ to $b_0$.  It is easy to see that
this gives us the forgetful functor $(\Cat/\RelMan^\Box)  \to
(\Set/\Man)^\Rightarrow $.   We now compose this forgetful functor
with the canonical functor $\rotpi: (\Set/\Man)^\Rightarrow \to \Man$
and obtain the desired functor $\UU: (\Cat/\RelMan^\Box)^\Rightarrow\to \Man$.
\end{remark}

\begin{proposition}
The category $\HyPh$ of hybrid phase spaces has finite products.
\end{proposition}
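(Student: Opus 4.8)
The plan is to exhibit a terminal object and binary products in $\HyPh$; since a category with a terminal object and binary products has all finite products, this suffices. For the terminal object I would take $T$ to be the hybrid phase space whose indexing category is the terminal category $\mathbf 1$ and which sends the unique object of $\mathbf 1$ to the one-point manifold with corners $\ast$ and the identity arrow to the identity relation $\ast\times\ast=\{(\ast,\ast)\}$. Given any hybrid phase space $b:S_b\to\RelMan^\Box$, there is a unique functor $S_b\to\mathbf 1$, and a vertical transformation from $b$ to the constant double functor at $\ast$ amounts to a choice, for each object $x$ of $S_b$, of a smooth map $b(x)\to\ast$, together with, for each arrow $\gamma$ of $S_b$, a $2$-cell of $\RelMan^\Box$ with the prescribed boundary. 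The smooth map is forced because $\ast$ is terminal in $\Man$, and the $2$-cell is forced because it only needs to carry $b(\gamma)$ into $\ast\times\ast$, which every relation does. Hence there is exactly one morphism $b\to T$ in $\HyPh=(\Cat/\RelMan^\Box)^\Rightarrow$, so $T$ is terminal.

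For binary products, given hybrid phase spaces $a:S_a\to\RelMan^\Box$ and $b:S_b\to\RelMan^\Box$, I would define $a\times b$ to have indexing category $S_a\times S_b$ (the product in $\Cat$, regarded as a discrete double category), to send an object $(x,y)$ to the product $a(x)\times b(y)$ in $\Man$ (recall that $\Man$ has finite products, as noted before the proposition), and to send an arrow $(\gamma,\delta):(x,y)\to(x',y')$ to the subset
\[
a(\gamma)\times b(\delta):=\bigl\{\,((p,q),(p',q'))\mid(p,p')\in a(\gamma),\ (q,q')\in b(\delta)\,\bigr\}
\]
of $\bigl(a(x)\times b(y)\bigr)\times\bigl(a(x')\times b(y')\bigr)$, regarded as a relation from $a(x)\times b(y)$ to $a(x')\times b(y')$. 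Since composition in $\RelMan$ is composition of relations and $(S'\circ S)\times(R'\circ R)=(S'\times R')\circ(S\times R)$ for composable relations (with the analogous statement for identities), the assignment $a\times b$ preserves composition and units and so is a double functor. The first projection is $\pi_a=(\pr_{S_a},P_a):a\times b\to a$, where $\pr_{S_a}:S_a\times S_b\to S_a$ is the projection functor and $P_a$ is the vertical transformation whose component at $(x,y)$ is the canonical projection $a(x)\times b(y)\to a(x)$; that $P_a$ is a vertical transformation is exactly the inclusion $\bigl((P_a)_{(x,y)}\times(P_a)_{(x',y')}\bigr)\bigl(a(\gamma)\times b(\delta)\bigr)\subseteq a(\gamma)$, which is immediate from the displayed formula. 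The second projection $\pi_b$ is defined symmetrically.

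For the universal property, let $(\varphi,\Phi):c\to a$ and $(\psi,\Psi):c\to b$ be morphisms out of a hybrid phase space $c:S_c\to\RelMan^\Box$. Any mediating morphism $h:c\to a\times b$ must have underlying functor $\langle\varphi,\psi\rangle:S_c\to S_a\times S_b$, by the universal property of the product in $\Cat$, and component at an object $x$ equal to $\langle\Phi_x,\Psi_x\rangle:c(x)\to a(\varphi(x))\times b(\psi(x))$, by the universal property of the product in $\Man$; hence $h$ is unique if it exists. It does exist: the family $\{\langle\Phi_x,\Psi_x\rangle\}_x$ is a vertical transformation from $c$ to $(a\times b)\circ\langle\varphi,\psi\rangle$ because, for any arrow $\gamma:x\to y$ of $S_c$ and any $(u,v)\in c(\gamma)$, the pair $(\Phi_x(u),\Phi_y(v))$ lies in $a(\varphi(\gamma))$ and $(\Psi_x(u),\Psi_y(v))$ lies in $b(\psi(\gamma))$ --- these being precisely the conditions that $\Phi$ and $\Psi$ are vertical transformations --- so that $\bigl(\langle\Phi_x,\Psi_x\rangle\times\langle\Phi_y,\Psi_y\rangle\bigr)\bigl(c(\gamma)\bigr)$ is contained in $(a\times b)\bigl(\langle\varphi,\psi\rangle(\gamma)\bigr)$. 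Compatibility of $h$ with the $2$-cells of $S_c$ is vacuous, $S_c$ being discrete as a double category.

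I expect no genuine obstacle here: the argument is essentially the bookkeeping of unwinding Definition~\ref{def:hyph} --- a morphism in $(\Cat/\RelMan^\Box)^\Rightarrow$ being a functor of indexing categories together with a vertical transformation --- and then invoking the existence of products in $\Cat$ and in $\Man$ together with the fact that the $2$-cell conditions in $\RelMan^\Box$ are inclusions of relations that are set up to hold. The one spot that genuinely requires a line of verification is that the ``product of relations'' is functorial, i.e.\ compatible with composition of relations; everything else is formal.
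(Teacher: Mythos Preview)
Your proposal is correct and follows essentially the same approach as the paper: both exhibit the terminal object as the functor from the terminal category to the one-point manifold, construct the binary product $a\times b$ on the indexing category $S_a\times S_b$ with values given by products of manifolds and (monoidal) products of relations, and verify the universal property by invoking products in $\Cat$ and in $\Man$. Your write-up is, if anything, slightly more explicit than the paper's in checking that the product of relations is compatible with composition and that the pairing $\langle\Phi_x,\Psi_x\rangle$ satisfies the vertical-transformation condition.
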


\begin{proof} The proof uses the fact that the categories $\Cat$ of
  small categories and functors, and $\Man$ of manifolds with corners
  and smooth maps  have finite
  products.  Additionally the category $\RelMan$ of manifolds and set-theoretic
  relations has a monoidal product which on objects is just the
  product of manifolds with corners. 
  
Let $[0]$  denote a category with one object and one morphism.
A terminal object in the category $\HyPh$ of hybrid phase spaces is
a functor $*: [0] \to \RelMan^\Box$ that assigns to the one
object 0 of 
$[0]$ a one point manifold $\{\bullet\}$.  It assigns to the identity
arrow $\id_0$ of $[0]$ the identity relation $\{(\bullet,
\bullet)\}$.  We claim that for any  hybrid phase space $a:S_a\to
\RelMan^\Box$ there is a unique morphism $(\varphi, \Phi):a \to *$ of
hybrid phase spaces.  Since $[0]$ is terminal in $\Cat$, there is a
unique functor $\varphi:S_a\to [0]$.  Since one point manifolds are
terminal in $\Man$ for every object $x$ of $S_a$ there is a unique
smooth map $\Phi_x: a(x) \to \{\bullet\}$.    The maps
$\{\Phi_x\}_{x\in S_a}$ are components of a unique vertical
transformation $\Phi: a\Rightarrow *\circ \varphi$.

Recall that the category $\RelMan$ of manifolds with corners and
relations has a monoidal product: given a relation $R:M\to N$ and a
relation $S:Q\to P$ their product $R\times S \subset (M\times Q) \times (N
\times P)$ is the set
\begin{equation}\label{eq:mon-pr-rel}
R\times S := \{(m,q, n,p) \in  (M\times Q) \times (N
\times P)\mid (m,n) \in R, (q,p) \in S\}
  \end{equation}

 Suppose now that $a:S_a\to \RelMan^\Box$, $b:S_b\to
\RelMan^\Box$ are two hybrid phase  spaces.   Their product $a\times
b$ is a functor from the product category $S_a\times
S_b$.  We define it as follows. Given
an object $(x,y) \in S_a\times S_b$ we set
\[
  (a\times b) (x,y) := a(x)\times b(y),
\]
where $a(x)\times b(y)$ is the product of two manifold with corners
(the categorical product in $\Man$).
Given an arrow $(x\xrightarrow{\gamma} x', y \xrightarrow{\nu} y')\in
S_a \times S_b$ we set
\[
  (a\times b) (\gamma, \nu):= a(\gamma) \times b(\nu),
\]
where  $a(\gamma) \times b(\nu)$ is the (monoidal) product of two relations given
by \eqref{eq:mon-pr-rel}.   The two projections $\pi_1= (\varpi_1, p_1):a\times
b \to a$, $\pi_2: (\varpi_2, p_2):a\times
b \to b$ are constructed  as follows. Since $\Cat$ has binary products
we have the canonical projection functor $\varpi_1: S_a\times S_b\to S_a$.
Since $\Man$ has binary products, for any pair of objects $(x,y)$
in $S_a\times S_b$ we have the canonical projection $(p_1)_{(x,y)}:
a(x)\times b(y)\to a(x)$.  These projections assemble into a vertical
transformation $p_1: a\times b \Rightarrow a \circ \varpi_1$ between
double functors.  The construction of $\pi_2 = (\varpi_2, p_2):a\times
b \to b$ is the same.

It remains to check  that $a\times b$ with the two projections
satisfies the universal property of product in $\sF{HyPh}$, i.e.\ that
given any pair of morphisms of hybrid phase spaces $z_1= (\zeta_1, Z_1):c\rightarrow
a$ and $z_2= (\zeta_2, Z_2):c\rightarrow b$, there is unique morphism
$z= (\zeta, Z):c\dashrightarrow a\times b$ so that the diagram
\[
  \begin{tikzcd} c\arrow[drr,"z_2"]\arrow[dr,dashed]\arrow[ddr,swap,"z_1"] & & \\
& a\times b\arrow[d,"\pi_1"] \arrow[r,swap,"\pi_2"] & 	b\\
& a &
\end{tikzcd}
\]
commutes.  Since $\Cat$ has products, there is a unique functor
$\zeta:S_c\to S_{a\times b} = S_a \times S_b$ so that $\varpi_1 \circ
\zeta = \zeta_1$ and $\varpi_2 \circ
\zeta = \zeta_2$.    For every object $x$ of $S_c$ we have two smooth
maps 
\[
(Z_1)_x: c(x)\to a (\zeta_1(x))\qquad \textrm{and} \qquad (Z_2)_x: c(x)\to b (\zeta_2(x)).
\]
Since the category $\Man$ of manifolds with corners and smooth maps
has binary products there exists a unique smooth map
\[
Z_x : c(x) \to a (\zeta_1(x)) \times b (\zeta_2(x))
\]
so that 
\[
(Z_1)_x = (p_1)_x \qquad \textrm{and} \qquad (Z_2)_x = (p_2)_x. 
\]
The collection of maps $\{Z_x\}_{x\in S_c}$ define a vertical
transformation $Z:c\Rightarrow (a\times b) \circ \zeta$.

Finally since $\HyPh$ has a terminal object and binary products it has
finite products.
 \end{proof} 

\begin{proposition}
The forgetful functor $\UU: \HyPh\to \Man$ from the category of hybrid
phase spaces to the category of manifolds with corners preserves finite products.
\end{proposition}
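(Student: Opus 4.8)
The plan is to unwind the definition of $\UU$ given in Remark~\ref{rmrk:4.8}, namely that $\UU$ factors as the composite
\[
(\Cat/\RelMan^\Box)^\Rightarrow \xrightarrow{\ (\ )_0\ } (\Set/\Man)^\Rightarrow \xrightarrow{\ \rotpi\ } \Man,
\]
where the first functor takes the "object part" of a hybrid phase space $a:S_a\to\RelMan^\Box$ and of a morphism $(\varphi,\Phi)$, and the second is the coproduct functor of Remark~\ref{rmrk:2.43}. Since a composite of product-preserving functors is product-preserving, it suffices to check that each of these two functors preserves finite products.

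For the first functor, I would check directly from the explicit description of products in $\HyPh$ given in the preceding proposition. There the terminal object is $*:[0]\to\RelMan^\Box$, whose object part is the one-element list picking out the one-point manifold $\{\bullet\}$; this is terminal in $(\Set/\Man)^\Rightarrow$ (a one-element indexing set together with the terminal object of $\Man$). For a binary product $a\times b$ with indexing category $S_a\times S_b$, its object part is the list $(S_a)_0\times(S_b)_0 \to \Man$, $(x,y)\mapsto a(x)\times b(y)$, together with the projection morphisms whose object parts are exactly the canonical projections $(p_1)_{(x,y)}$, $(p_2)_{(x,y)}$. Since the forgetful functor $U:\Cat\to\Set$ preserves products, $(S_a\times S_b)_0 = (S_a)_0\times(S_b)_0$, and the resulting list-with-projections satisfies the universal property of the product of $a_0$ and $b_0$ in $(\Set/\Man)^\Rightarrow$ — indeed the universal arrow is assembled componentwise out of the universal arrows in $\Man$, exactly as in the proof above but with the "arrow" (2-cell / relation) data stripped away. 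So $(\ )_0$ preserves the terminal object and binary products, hence finite products.

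For the second functor $\rotpi:(\Set/\Man)^\Rightarrow\to\Man$, the statement is that finite coproducts indexed as in Remark~\ref{rmrk:2.43} are sent to products in $\Man$. On the terminal object it sends the one-element list at $\{\bullet\}$ to $\bigsqcup_{x\in\{*\}}\{\bullet\} = \{\bullet\}$, the terminal object of $\Man$. For a binary product of lists $\tau:X\to\Man$ and $\mu:Y\to\Man$, whose product in $(\Set/\Man)^\Rightarrow$ is the list $X\times Y \to \Man$, $(x,y)\mapsto\tau(x)\times\mu(y)$, one computes
\[
\rotpi\bigl((x,y)\mapsto \tau(x)\times\mu(y)\bigr) \;=\; \bigsqcup_{(x,y)\in X\times Y}\bigl(\tau(x)\times\mu(y)\bigr)
\;\cong\; \Bigl(\bigsqcup_{x\in X}\tau(x)\Bigr)\times\Bigl(\bigsqcup_{y\in Y}\mu(y)\Bigr),
\]
the last isomorphism being the distributivity of products over finite coproducts in $\Man$ (a finite disjoint union of products of manifolds with corners is, via the evident diffeomorphism, the product of the disjoint unions). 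One then checks that under this identification the images $\rotpi(\pi_1),\rotpi(\pi_2)$ of the projection morphisms of lists are the two Cartesian projections of $\Man$, and that for any $c$ with maps to both factors the induced map into the product, being induced componentwise by universal properties, matches. Hence $\rotpi$ preserves finite products, and the proposition follows.

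The step I expect to require the most care is the distributivity isomorphism $\bigsqcup_{(x,y)}(\tau(x)\times\mu(y))\cong(\bigsqcup_x\tau(x))\times(\bigsqcup_y\mu(y))$ in $\Man$, together with the verification that it is natural and intertwines the two families of projections. This is the only place where a genuine geometric fact about manifolds with corners (rather than a purely formal 2-categorical manipulation) enters; everything else is bookkeeping with universal properties. It is, however, entirely routine: finite disjoint unions commute with finite products in the category of manifolds with corners just as they do in $\Set$, since the relevant charts and corner structure on $(\,[0,\infty)^k\times\mathbb{R}^{n-k}\,)\times(\,[0,\infty)^l\times\mathbb{R}^{m-l}\,)$ are manifestly the product charts.
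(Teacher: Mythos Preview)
Your proposal is correct and reduces to the same key step as the paper: the distributivity isomorphism
\[
\bigsqcup_{(x,y)\in X\times Y}\bigl(\tau(x)\times\mu(y)\bigr)\;\cong\;\Bigl(\bigsqcup_{x\in X}\tau(x)\Bigr)\times\Bigl(\bigsqcup_{y\in Y}\mu(y)\Bigr)
\]
in $\Man$. The paper proves this directly (the canonical comparison map is a bijection on underlying sets by the $\Set$ case, and is a local diffeomorphism because the coproduct inclusions $\imath_{x_0}$, $\imath_{y_0}$ are open embeddings; a bijective local diffeomorphism is a diffeomorphism), while you add the intermediate factorization through $(\Set/\Man)^\Rightarrow$ --- harmless but not needed.

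One slip to fix: you repeatedly invoke distributivity of products over \emph{finite} coproducts, but the object sets $(S_a)_0$ and $(S_b)_0$ of the source categories are not assumed finite, so you need distributivity over arbitrary small coproducts. The argument (open embeddings plus bijection on sets) goes through unchanged in that generality, so this does not affect correctness.
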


\begin{proof}
  Clearly $\UU$ takes the terminal object $*: \one  \to \RelMan^\Box$ in
  the category $\HyPh$ to a one point manifold $*$, which is terminal
  in the category $\Man$.

  It remains to check that $\UU$ preserves binary products.  Namely we
  check that for any two hybrid phase spaces $a,b\in \HyPh$ the
  manifolds with corners $\UU(a)\times \UU(b) $ and $\UU(a\times b)$
  are canonically diffeomorphic.  This amounts to checking that for
  any two lists $\mu:X\to \Man$ and $\nu:Y\to \Man$ in $\Set/\Man$ the
  manifolds $\bigsqcup_{(x,y)\in X\times Y} \mu(x)\times \nu(y)$ and
  $\left(\bigsqcup _{x\in X}\mu (x) \right) \times \left(
    \bigsqcup_{y\in Y} \nu(y) \right)$ are canonically diffeomorphic.

Let $\sfC$ be a category with coproducts and binary products.   Given
two objects  $\mu: X\to \sfC$ and $\nu: Y\to \sfC $ of the category of
lists $\Set/\sfC$ we have a canonical map
\[
\scrP: \bigsqcup_{(x,y)\in X\times Y} \mu(x)\times \nu(y) \to
\left(\bigsqcup _{x\in X}\mu (x) 
\right) \times \left(    \bigsqcup_{y\in Y} \nu(y)  \right).
 \] 
 It is induced by the products of the structure maps
\[
 \mu(x_0)\times \nu(y_0) \xrightarrow{
   \imath_{x_0}\times \imath_{y_0}}
 \left(\bigsqcup _{x\in X}\mu (x) 
\right) \times \left(    \bigsqcup_{y\in Y} \nu(y)  \right).
\]
It is well known that in the category $\Set$ of sets the map $\scrP$
is a bijection (see \cite[Proposition~8.6]{Awodey}).  In the category $\Man$ of manifolds
with corners the structure maps $\imath_{x_0} :\mu(x_0)\to \bigsqcup
_{x\in X} \mu(x)$ are open embeddings.   Consequently the products $\mu(x_0)\times \nu(y_0) \xrightarrow{
   \imath_{x_0}\times \imath_{y_0}}
 \left(\bigsqcup _{x\in X}\mu (x) 
\right) \times \left(    \bigsqcup_{y\in Y} \nu(y)  \right)$ are open
embeddings as well and, in particular, are local diffeomorphisms.
Consequently in the category $\Man$ the map $\scrP$ is a local diffeomorphism and a
bijection of the underlying sets, hence a diffeomorphism.
\end{proof}

\begin{lemma}
  Suppose $\sfC\xrightarrow{F}\sfD$ is a functor.  Then $F$ induces a
  functor $F_*: (\FinSet/\sfC)^\Leftarrow \to
  (\FinSet/\sfD)^\Leftarrow$ which is given by
  \[
    F_* \left(
      \xy
(-10, 6)*+{X} ="1"; 
(10, 6)*+{Y} ="2";
(0,-6)*+{\sfC }="3";
{\ar@{->}_{\tau} "1";"3"};
{\ar@{->}^{\varphi} "1";"2"};
{\ar@{->}^{\mu} "2";"3"};
{\ar@{=>}_{\scriptstyle \Phi} (4,2)*{};(-0.4,0)*{}} ; 
\endxy 
\right)  =
\xy
(-10, 6)*+{X} ="1"; 
(10, 6)*+{Y} ="2";
(0,-8)*+{\sfD }="3";
{\ar@{->}_{F\circ \tau} "1";"3"};
{\ar@{->}^{\varphi} "1";"2"};
{\ar@{->}^{F\circ \mu} "2";"3"};
{\ar@{=>}_{\scriptstyle F\circ \Phi} (4,2)*{};(-0.4,-1)*{}} ; 
\endxy \quad.
    \]
 If $\sfC$ and $\sfD$ have finite products and $F$ is product
 preserving then the diagram
 \[
   \xy
(-15,10 )*+{(\FinSet/\sfC)^\Leftarrow} ="1"; 
(15,10 )*+{\sfC} ="2";
(-15,-10 )*+{(\FinSet/\sfD)^\Leftarrow} ="3";
(15,-10 )*+{\sfD} ="4";
{\ar@{->}^{\qquad \Pi} "1";"2"};
{\ar@{->}_{F_*} "1";"3"};
{\ar@{->}^{F} "2";"4"};
{\ar@{->}_{\qquad \Pi} "3";"4"};
   \endxy
 \]
 commutes (up to a unique isomorphism).
\end{lemma}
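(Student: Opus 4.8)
The plan is to verify the two assertions in turn; both are essentially formal, the first resting on functoriality of $F$ and the second additionally on the universal property of products together with the hypothesis that $F$ preserves them.

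\emph{Construction of $F_*$.} On objects there is nothing to check: if $\tau\colon X\to\sfC$ is a finite list then $F\circ\tau\colon X\to\sfD$ is again a finite list, $X$ being finite and discrete. On morphisms, given $(\varphi,\Phi)\colon(X\xrightarrow{\tau}\sfC)\to(Y\xrightarrow{\mu}\sfC)$ --- so $\varphi\colon X\to Y$ is a map of sets and $\Phi\colon\mu\circ\varphi\Rightarrow\tau$ is a natural transformation --- the whiskered family $F\circ\Phi$ with components $(F\circ\Phi)_x=F(\Phi_x)$ is a natural transformation $(F\circ\mu)\circ\varphi\Rightarrow F\circ\tau$; the naturality squares are trivial since $X$ has no non-identity arrows. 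Functoriality of $F_*$ then reduces to the two identities $F\circ\id_\tau=\id_{F\circ\tau}$ and $F\circ\big(\Phi\circ_v(\Psi\circ\varphi)\big)=(F\circ\Phi)\circ_v\big((F\circ\Psi)\circ\varphi\big)$, where I have used the composition rule of $(\FinSet/\sfC)^\Leftarrow$ (pasting of the $2$-commuting triangles, analogous to Definition~\ref{def:2.40}); both follow from $F$ being a functor, i.e.\ from its compatibility with vertical composition of natural transformations and with whiskering by a functor.

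\emph{Commutativity up to a unique isomorphism.} Here $\Pi$ is the contravariant functor of Remark~\ref{rmrk:2.41}, so the claim is that $F\circ\Pi$ and $\Pi\circ F_*$, regarded as functors on $\big((\FinSet/\sfC)^\Leftarrow\big)^\op$ with values in $\sfD$, are naturally isomorphic via a canonical isomorphism which is moreover unique. For a finite list $\tau\colon X\to\sfC$ one has $\Pi(\tau)=\prod_{x\in X}\tau(x)$ and $\Pi(F_*\tau)=\Pi(F\circ\tau)=\prod_{x\in X}F(\tau(x))$. Since $F$ preserves finite products, the cone $\big(F(\Pi\tau),\{F(\pi_x)\}_{x\in X}\big)$ is a product cone for $\{F(\tau(x))\}_{x\in X}$, so there is a unique isomorphism
\[
\eta_\tau\colon F(\Pi\tau)\xrightarrow{\ \sim\ }\Pi(F_*\tau)\qquad\text{with}\qquad \pi_x\circ\eta_\tau=F(\pi_x)\ \text{ for all }x\in X.
\]
For naturality of $\eta$, given $(\varphi,\Phi)\colon\tau\to\mu$ in $(\FinSet/\sfC)^\Leftarrow$ I must show $\eta_\tau\circ F\big(\Pi(\varphi,\Phi)\big)=\Pi\big(F_*(\varphi,\Phi)\big)\circ\eta_\mu$ as arrows $F(\Pi\mu)\to\Pi(F_*\tau)$. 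By the universal property of $\Pi(F_*\tau)$ it is enough to postcompose with each projection $\pi_x$, and using the defining relation $\pi_x\circ\Pi(\varphi,\Phi)=\Phi_x\circ\pi_{\varphi(x)}$ of Remark~\ref{rmrk:2.41} --- applied once in $\sfC$ and once in $\sfD$ --- both sides collapse to $F(\Phi_x)\circ F(\pi_{\varphi(x)})=F(\Phi_x\circ\pi_{\varphi(x)})$. Uniqueness of the witnessing isomorphism is then immediate: any natural transformation $F\circ\Pi\Rightarrow\Pi\circ F_*$ respecting the canonical product projections is forced, component by component, to satisfy $\pi_x\circ\eta_\tau=F(\pi_x)$, and this pins $\eta_\tau$ down by the universal property of the product.

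I expect no serious difficulty: the argument is entirely formal. The closest thing to an obstacle is the bookkeeping of variances --- $\Pi$ is contravariant, $F_*$ must be read on the opposite categories, and in $(\FinSet/\sfC)^\Leftarrow$ the transformation $\Phi$ runs from $\mu\circ\varphi$ to $\tau$ rather than the other way --- so that the naturality square is set up with its arrows pointing in the correct direction before the projection-wise check is carried out.
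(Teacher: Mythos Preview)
Your argument is correct and complete; the paper itself simply writes ``Omitted'' for this proof, so there is no approach to compare against and your verification supplies exactly the routine check the authors declined to spell out. One small remark: the phrase ``up to a unique isomorphism'' in the statement is somewhat informal, and your uniqueness claim is really uniqueness \emph{subject to} compatibility with the product projections---but that is the only sensible reading, and your handling of it is fine.
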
  

\begin{proof}
  Omitted.
\end{proof}

\begin{corollary}\label{cor:4.12}
  The diagram
  \[
   \xy
(-15,10 )*+{(\FinSet/\HyPh)^\Leftarrow} ="1"; 
(15,10 )*+{\HyPh} ="2";
(-15,-10 )*+{(\FinSet/\Man)^\Leftarrow} ="3";
(15,-10 )*+{\Man} ="4";
{\ar@{->}^{\qquad \Pi} "1";"2"};
{\ar@{->}_{\UU_*} "1";"3"};
{\ar@{->}^{\UU} "2";"4"};
{\ar@{->}_{\qquad \Pi} "3";"4"};
   \endxy
 \]
 commutes (up to a unique isomorphism).
\end{corollary}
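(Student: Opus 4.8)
The plan is to obtain this as an immediate instance of the preceding Lemma. That Lemma asserts that any product-preserving functor $F\colon \sfC\to\sfD$ between categories with finite products induces a functor $F_*\colon (\FinSet/\sfC)^\Leftarrow \to (\FinSet/\sfD)^\Leftarrow$, and that the square relating $F$, $F_*$ and the two product functors $\Pi$ commutes up to a unique isomorphism. So it suffices to verify that the hypotheses of the Lemma are met for $\sfC = \HyPh$, $\sfD = \Man$ and $F = \UU$, and then to observe that the resulting statement is exactly the one claimed.

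First I would record the three facts that are needed, all already established above: the category $\HyPh$ of hybrid phase spaces has finite products (the Proposition proved above); the category $\Man$ of manifolds with corners has finite products (recalled in the discussion preceding Example~\ref{ex:exPiphi1}); and the underlying-manifold functor $\UU\colon \HyPh\to\Man$ preserves finite products (the Proposition proved immediately above). The functor $\UU_*$ appearing in the statement is then, by definition, the functor $F_*$ supplied by the Lemma for $F = \UU$: on an object it sends a finite list $\tau\colon X\to\HyPh$ to $\UU\circ\tau\colon X\to\Man$, and on a morphism $(\varphi,\Phi)$ of $(\FinSet/\HyPh)^\Leftarrow$ it sends it to $(\varphi,\, \UU\circ\Phi)$.

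Applying the Lemma then yields precisely that the displayed square commutes up to a unique isomorphism, which is the assertion of the Corollary. I do not expect a genuine obstacle here; the only point requiring a moment's care is purely bookkeeping, namely identifying the canonical isomorphism produced by the Lemma with the expected one. For a finite list $\tau\colon X\to\HyPh$ this isomorphism is the comparison map $\scrP$ (in the notation of the proof that $\UU$ preserves products) identifying $\bigsqcup_{x\in X}\UU\bigl(\textstyle\bigsqcap_{x'}\tau(x')\bigr)$-type expressions correctly, i.e.\ $\Pi(\UU\circ\tau)$ with $\UU(\Pi\tau)$; naturality in $\tau$ with respect to morphisms $(\varphi,\Phi)$ of $(\FinSet/\HyPh)^\Leftarrow$ follows, as in the proof of the Lemma, from the universal properties of products and coproducts and the product-preservation of $\UU$. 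Since the Lemma already packages all of this, no further argument is required.
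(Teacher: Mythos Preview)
Your proposal is correct and matches the paper's approach exactly: the paper states the result as an immediate corollary of the preceding Lemma (whose proof it omits), with no separate argument given. Your verification that $\HyPh$ and $\Man$ have finite products and that $\UU$ preserves them, all established earlier, is precisely what is needed to invoke the Lemma; the additional bookkeeping paragraph is more than the paper records but is harmless.
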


\begin{example}[Compare with Example~\ref{ex:exPiphi2}] \label{ex:exPiphi3}
  Let $\sfC =\HyPh$, the category of hybrid phase spaces, $X = Y = \{1,2, 3\}$.
  Fix two hybrid phase spaces $A$ and $B$ and  a  map $s:A\to B$ of
  hybrid phase spaces.  Let $\tau, \mu: X, Y\to \HyPh$ be
  the constant maps defined by $\tau(j) = B$, $\mu(j) =A$ for all
  $j$.    We define a morphism $(\varphi,
  \Phi): \tau \to \mu$ in $(\FinSet/\HyPh)^\Leftarrow$ as follows.
As in Example~\ref{ex:exPiphi2} we define $\varphi$ by 
  \[
\varphi(1) = 2,\qquad \varphi(2) = 1, \qquad \varphi(3) =2.
\]
We define $\Phi_i: \mu (\varphi(i)) = A \to \tau(i) = B$ to be the map
$s:A \to B$ for all $i$.
  Then $(\varphi,\Phi): \tau \to \mu$ is a morphism in
  $(\FinSet/\HyPh)^\Leftarrow$.
 By Remark~\ref{rmrk:2.41} we have a map of hybrid phase spaces
 $\Pi (\varphi, \Phi):\Pi(\mu) = A^3 \to B^3 = \Pi(\tau)$.
By Corollary~\ref{cor:4.12} the map $\UU(\Pi(\varphi, \Phi)): \UU(A^3)
= \UU(A)^3\to \UU(B)^3 = \UU (B^3)$ 
 is given by
\[
\UU(\Pi(\varphi,\Phi))\, (a_1, a_2, a_3) = (\UU(s)\,(a_2), \UU(s)\,(a_1), \UU(s)\,( a_2))
\]
for all $(a_1,a_2, a_3) \in (\UU(A))^3$.
\end{example}
 
\subsection{A category $\HDS$ of hybrid dynamical systems}\mbox{}\\
We  define a category $\HDS$ of hybrid dynamical systems as the
category of elements of the functor $\scrX\circ \UU: \HyPh\to
\RelVect$.  Thus we formally record:

\begin{definition}[The category $\HDS$ of hybrid dynamical systems]
The category $\HDS$ of hybrid dynamical systems is the category of
elements of the functor $\scrX \circ \UU : \HyPh \to \RelVect$.
Explicitly an object of the category $\HDS$, that is, a {\sf hybrid
dynamical system},  is a pair $(a, X)$ where $a$ is a hybrid phase space
and $X$ is a vector field on the underlying manifold $\UU(a)$.  A
map of hybrid dynamical systems from $(a,X)$ to $(b,Y)$ is a map
$(\varphi, \Phi):a\to b$ of hybrid phase spaces such that the vector
field $Y$ is $\UU(\varphi,\Phi)$-related to $X$.
\end{definition}

To define executions of a hybrid dynamical system $(a,X)$ we need to
define the hybrid analogue of the continuous time dynamical system
$(I, \frac{d}{dt})$ where  $I$ is an interval and $\frac{d}{dt}$ is the constant vector
field.

\begin{definition}[The hybrid dynamical system $(\scrI, \partial) = (\scrI(\{t_i\}), \partial)  $
  associated with increasing sequence $\{t_i\}_{i=0}^\infty$ of real
  numbers]
  \label{def:4.15}
Let $\{t_i\}$ be an increasing sequence of real numbers.  Let $\scrT$
be the graph with the set of vertices $\scrT_0 = \N$, the set of
natural numbers (that includes 0), the set of arrows $\scrT_1 = \N$
and the source and target maps $\frs \times \frt : \scrT_1\to \scrT_0
\times \scrT_0$ given by $i\mapsto (i, i+1)$.  In other words arrows
of the graph $\scrT$ are of the form $i\xrightarrow{i} i+1$.  Define a map of
graphs $\bar{\tau}: \scrT\to U(\RelMan)$ (recall that $U(\RelMan)$ is the
graph underlying the category of manifolds with corners and relations)
by
\[
  \bar{\tau} (i\xrightarrow{i}i+1) = [t_i,t_{i+1}]\xrightarrow{ \{(t_{i+1},
    t_{i+1})\}} [t_{i+1}, t_{i+2}].
  \]
Here $\{(t_{i+1},t_{i+1})\} \subset [t_i, t_{i+1}]\times [t_{i+1},
t_{i+2}]$ is a relation consisting of one point.  We define the hybrid
phase space $\scrI$ to be the corresponding functor $\tau:
\Free(\scrT)\to \RelMan$.   We define the vector field $\partial$ on
the underlying manifold $\UU(\tau)  = \bigsqcup_{i=0}^\infty [t_i,
t_{i+1}]$ by $\partial|_{[t_i,t_{i+1}]} := \frac{d}{dt}$.
\end{definition}

\begin{definition}\label{def:execution} \label{def:exec}
An {\sf execution} of a hybrid dynamical system $(a, X)$ is an  increasing sequence $\{t_i\}_{i=0}^\infty$ of real
  numbers and a map of
hybrid dynamical systems $(\varphi, \sigma): (\scrI(\{t_i\}), \partial) \to
(a,X)$. Here  $(\scrI(\{t_i\}), \partial) $ is the hybrid dynamical
system associated to the sequence $\{t_i\}_{i=0}^\infty$ (Definition~\ref{def:4.15}).
\end{definition}

\begin{remark}\label{rmrk:exec}
  Unpacking Definition~\ref{def:execution} we see that an execution
  $(\varphi, \sigma)$ of a hybrid dynamical system $(a, X)$ associates
   to each integer $i\geq 0$ an integral curve
  $\sigma_i:[t_i, t_{i+1}]\to a(\varphi_0(i))$ of the vector field
  $X_{\varphi_0(i)}$ on the manifold with corners $ a(\varphi_0(i))$
  so that $(\sigma_i(t_{i+1}), \sigma_{i+1} (t_{i+1}))$ lies in the
  reset relations
  $a(\varphi_1(i\xrightarrow{i}i+1) \subset a(\varphi_0(i))\times
  a(\varphi_0(i+1))$.  Thus Definition~\ref{def:exec} of executions as
  maps of hybrid dynamical systems agrees  with a traditional definition of
  an execution, Definition~\ref{def:chs_execution}.
\end{remark}

\begin{theorem} \label{thm:exec}
A map  $(\psi, \Psi):(a, X)\to (b, Y)$ of hybrid dynamical system
sends the executions of $(a,X)$ to the executions of $(b, Y)$.
 \end{theorem}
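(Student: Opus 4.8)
The plan is to exploit the fact, already emphasized in the introduction and in Remark~\ref{rmrk:exec}, that an execution of a hybrid dynamical system \emph{is} a morphism in the category $\HDS$; once this is in place the theorem reduces to the statement that $\HDS$ is a category and hence morphisms compose.

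First I would recall that, by Definition~\ref{def:exec}, an execution of $(a,X)$ consists of an increasing sequence $\{t_i\}_{i=0}^\infty$ of real numbers together with a map of hybrid dynamical systems $(\varphi,\sigma)\colon (\scrI(\{t_i\}),\partial)\to (a,X)$, i.e. a morphism in $\HDS$. Given such an execution and the map $(\psi,\Psi)\colon (a,X)\to (b,Y)$, I would form the composite $(\psi,\Psi)\circ(\varphi,\sigma)$. Since $\HDS$ is the category of elements $\int(\scrX\circ\UU)$ of the lax functor $\scrX\circ\UU\colon\HyPh\to\RelVect$ (Definition~\ref{def:cat_of_elements}), this composite is again a morphism in $\HDS$, now with source $(\scrI(\{t_i\}),\partial)$ and target $(b,Y)$. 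Therefore the pair consisting of the \emph{same} sequence $\{t_i\}_{i=0}^\infty$ together with $(\psi,\Psi)\circ(\varphi,\sigma)$ is, by Definition~\ref{def:exec}, an execution of $(b,Y)$; this is the execution to which $(\psi,\Psi)$ sends the original one.

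For a reader who wants the underlying geometry made explicit, I would then unpack why the composite really is a morphism in $\HDS$ without invoking the general category-of-elements formalism. The underlying map of hybrid phase spaces of the composite is $\UU((\psi,\Psi)\circ(\varphi,\sigma)) = \UU(\psi,\Psi)\circ\UU(\varphi,\sigma)$ by functoriality of $\UU$. Since $X$ is $\UU(\varphi,\sigma)$-related to $\partial$ and $Y$ is $\UU(\psi,\Psi)$-related to $X$, the pair $(\partial,Y)$ lies in $\scrX(\UU(\psi,\Psi))\circ\scrX(\UU(\varphi,\sigma))$, and laxness of the vector-field functor $\scrX$ (Example~\ref{ex:3.3}) puts this composite of linear relations inside $\scrX(\UU(\psi,\Psi)\circ\UU(\varphi,\sigma))$, which is exactly the relatedness condition defining a morphism $(\scrI(\{t_i\}),\partial)\to(b,Y)$ in $\HDS$. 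Equivalently, at the level of Remark~\ref{rmrk:exec}, I would apply the chain rule to each $\sigma_i$ to see that its image under $\UU(\psi,\Psi)$ is an integral curve of the relevant component of $Y$, and note that compatibility with the reset relations is preserved because $(\psi,\Psi)$ carries the reset relations of $a$ into those of $b$.

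The only thing requiring care is bookkeeping: checking that the sequence datum $\{t_i\}$ is transported unchanged, and that the index maps $\varphi_0$ and $\psi_0$ compose correctly so that the target vertices and reset relations line up. This is routine, and I do not anticipate any genuine obstacle — the content of the theorem is essentially the observation that $\HDS$ is a category together with the identification of executions as morphisms into $(a,X)$.
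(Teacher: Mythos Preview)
Your proposal is correct and takes exactly the same approach as the paper: an execution is by definition a morphism in $\HDS$, so composing with $(\psi,\Psi)$ yields another morphism in $\HDS$ with source $(\scrI(\{t_i\}),\partial)$, hence an execution of $(b,Y)$. The paper's proof is the three-line version of your first two paragraphs; your additional unpacking via the chain rule and reset relations is a helpful elaboration but not needed for the argument.
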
 

 \begin{proof}
Let $(\varphi, \sigma): (\scrI, \partial) \to (a,X)$ be an execution
of the hybrid dynamical system $(a,X)$.   Then, by definition,
$(\varphi, \sigma)$ is a
map of hybrid dynamical systems. Hence $(\psi, \Psi)\circ (\varphi,
\sigma): (\scrI, \partial) \to (b, Y)$ is also a map of hybrid
dynamical systems, and therefore  an execution.
 \end{proof}

\section{The double category $\HySSub^\Box$ of hybrid surjective
  submersions and hybrid open systems}

We start with some category-theoretic generalities that will help us
to define the category of hybrid surjective submersions $\HySSub$ and
a forgetful product-preserving functor $\UU:\HySSub\to \SSub$ from the
category of hybrid surjective submersions to the
category of surjective submersions.

Suppose $\scC$ is
a category with finite products.  Denote by $\two$ the category with
two objects $0, 1$ and one nonidentity morphism $0\to 1$.   The
functor category $\scC^{\two}$ is the category of arrows of the
category $\scC$.  The objects of $\scC^{\two}$ can be identified with
morphism $c\xrightarrow{f}c'$ of $\scC$.  Under this identification a
morphism in $\scC^{\two}$ from $c\xrightarrow{f} c'$ to
$d\xrightarrow{g}d'$ is the commutative diagram
\[
    \xy
(-8, 6)*+{c'} ="1"; 
(8, 6)*+{c} ="2";
(-8,-6)*+{d'}="3";
(8, -6)*+{d}="4";
{\ar@{->}_{f} "2";"1"};
{\ar@{->}^{g} "4";"3"};
{\ar@{->}_{h} "1";"3"};
{\ar@{->}^{k} "2";"4"};
\endxy  .
\]
If $\scC$ has finite products, then so does the arrow category
$\scC^{\two}$: the terminal object of $\scC^{\two}$ is the unique arrow
$*\to *$ where $*$ is the terminal object of $\scC$.   The binary
product of $c\xrightarrow{f}c'$ and $d\xrightarrow{g}d'$ is
$c\times d \xrightarrow{f\times g} c'\times d'$ and so on.
If $\scD$ is another category with finite products and $\UU:\scC\to
\scD$ is a product -preserving functor, then the induced functor
\[
\scC^{\two} \to \scC^{\two}, \qquad (c\xrightarrow{f} c') \mapsto
(\UU(c)\xrightarrow{\UU(f)} \UU(c') )
  \]
is also product preserving.  We  denote this induced functor again by
$\UU$ and trust that it will not cause any confusion.

\begin{definition}[The category $\HySSub$ of hybrid submersion] \label{def:hybssub}
Observe that the category $\SSub$ of surjective submersions in a full subcategory
of the category of arrows $\Man^{\two}$.

We define the category
$\HySSub$ of {\sf hybrid surjective submersions} to be the preimage
$\UU\inv (\SSub)$ in the category of arrows $\HyPh^{\two}$.   Here
$\UU:\HyPh^{\two}\to \Man^{\two}$ is the forgetful functor induced by
$\UU:\HyPh\to \Man$.

More concretely the objects of $\HySSub$ are   {\sf hybrid surjective
  submersions}.  These  are maps of hybrid phase
spaces $a_\tot \xrightarrow{p_a} a_\st$ such that the underlying maps
of manifolds $\UU(p_a): \UU (a_\tot)\to \UU(a_\st)$ are  surjective
submersions.  A morphism in $\HySSub$ from $a_\tot \xrightarrow{p_a}
a_\st$ to $b_\tot\xrightarrow{p_b} b_\st$ is a pair of maps of hybrid
phase spaces $f_\tot:a_\tot \to b_\tot$, $f_\st:a_\st\to b_\st$ making
the appropriate diagram commute:
\[
f_\st \circ p_a = p_b \circ f_\tot  .
  \]
\end{definition}

\begin{remark} \label{rmrk:4.2} Since $\UU: \HyPh\to \Man$ is product
  preserving, so is the induced forgetful functor
  $\UU: \HyPh^{\two} \to \Man^{\two}$ on arrow categories.  By
  definition $\UU$ takes the category $\HySSub \subset \HyPh^{\two}$ to
  the category $\SSub$ of surjective submersions.  Consequently the
  functor
\begin{equation}\label{eq:UU}
\UU:\HySSub \to \SSub,
\end{equation}
which is a restriction of the forgetful functor $\UU:\HyPh^{[1]} \to \Man^{\two}$,
preserves finite products.
\end{remark}

\begin{remark} \label{rmrk:5.4}
Given two manifold $M, N$ the projections $\pi_1:M\times N\to M$ and
$\pi_2:M\times N\to N$ are surjective submersions.  Since the
forgetful functor 
$\UU:\HyPh\to \Man$ is finite product preserving, for any 
two hybrid phase spaces $a$ and $b$ the images $\UU(\pi_1) :
\UU(a)\times \UU(a) \to  \UU(b)$ and $\UU(\pi_2): \UU(a)\times
\UU(b)\to \UU(b)$ of the canonical
projections $\pi_1:a\times b \to a$ and $\pi_2:a\times b\to b$ are the
canonical projections of the product of manifolds, hence surjective
submersions.  We conclude that the canonical  projections
$\pi_1:a\times b \to a$ and $\pi_2:a\times b\to b$ from the
categorical product in $\HyPh$ to its factors  are
hybrid surjective submersions in the sense of Definition~\ref{def:hybssub}.
\end{remark}

Analogously to the category $\OS$ of open systems
(Example~\ref{ex:3.4}) we have the category $\HyOS$ of hybrid open
systems.  It is defined as follows.
\begin{definition}[The category $\HyOS$ of hybrid open
  systems] \label{def:HyOS} We define the  category $\HyOS$ of {\sf  hybrid open
  systems} to be  the category of elements
  (Definition~\ref{def:cat_of_elements}) of the functor
  $\Crl \circ \UU: \HySSub \to \RelVect$.  Here as before
  $\Crl: \SSub \to \RelVect$ is the functor that assigns to each
  surjective submersion the vector space of open systems on the
  submersions (see Example~\ref{ex:3.4}) and $\UU: \HySSub \to \SSub$
  is the forgetful functor from the category of hybrid surjective
  submersions to surjective submersions (Remark~\ref{rmrk:4.2}).

  More explicitly a {\sf hybrid open system} is a pair $(a, F)$ where
  $a = (a_\tot\xrightarrow{p_a} a_\st)$ is a hybrid surjective
  submersion and $F:\UU(a_\tot)\to T\UU(a_\st)$ is an open system on
  the underlying surjective submersion $\UU(a) = (\UU(a_\tot)\xrightarrow{\UU(p_a)} \UU(a_\st))$.
 \end{definition} 

 Recall that the category $\SSub$ of surjective submersions has a
 special class of morphisms: the interconnection maps
 (Definition~\ref{def:ssub}).  Recall also that together the two types
 of morphisms --- interconnections and ``ordinary'' maps of
 submersions --- define the double category $\SSub ^\Box$ of
 surjective submersions (Definition~\ref{def:ssub_box}).  The
 forgetful functor $\UU:\HySSub\to \SSub$ allows us to transfer this
 double category structure to hybrid submersions.

 \begin{definition}\label{def:hy_int}
A map $f= (f_\tot, f_\st):a\to b$ of hybrid surjective submersions is
an {\sf interconnection morphism} if the underlying map $\UU(f_\st):
\UU(a_\st)\to \UU(b_\st)$  is an interconnection map of surjective submersions.
 \end{definition}  
\begin{example}
Let $a$, $b$ be two hybrid phase spaces and $s:a\to b$ a map of hybrid
phase spaces.  By Remark~\ref{rmrk:5.4} the canonical projection
$\pi_1:a\times b \to a$ is a hybrid surjective submersion.  The
identity map $\id:a\to a$ is also a hybrid surjective submersion.
The diagram
\[
    \xy
(-8, 6)*+{a} ="1"; 
(8, 6)*+{a\times b} ="2";
(-8,-6)*+{a}="3";
(8, -6)*+{a}="4";
{\ar@{->}^{(\id,s)} "1";"2"};
{\ar@{->}^{\pi_1} "2";"4"};
{\ar@{->}_{\id} "1";"3"};
{\ar@{->}^{\id} "3";"4"};
\endxy
\]
commutes.  Hence $\varphi:= ((\id, s), \id)): (a\xrightarrow{\id}a) \to (a\times
b\xrightarrow{\pi_1} b))$ is an interconnection map of hybrid
surjective submersions.

It follows that for any hybrid open system of the form  $(a\times b\xrightarrow{\pi_1} a, F \in \Crl
\UU(a\times b\xrightarrow{\pi_1} a))$ the pair $(a, \UU(\varphi)^* F)$
is a hybrid dynamical system (since $\UU(\varphi)^*F$ is a vector field).
\end{example}

\begin{remark}
Suppose $\varphi: a \to b$ is an interconnection morphism between two
hybrid surjective submersions and $F\in \Crl (\UU(b))$ is a
(continuous time) open system.  Then $(b, F)$ is a hybrid open system
and so is $(a, \UU(\varphi)^*F)$.
 \end{remark} 

\begin{example} \label{ex:hds_on_product} \label{ex:5.9}
In this example we  generalize Example~\ref{rmrk:vf_on_product} from
manifolds to hybrid phase spaces.  
We argue that any hybrid dynamical system of the form $(a\times b, X\in
\scrX(\UU(a\times b)))$ ($a,b$ are hybrid phase spaces) is the result
of interconnection of two hybrid open systems.

As in 
Example~\ref{rmrk:vf_on_product}  the vector field $X = (X_1, X_2) $
where $X_1 :\UU(a\times b)\to T\UU (a)$, $X_2:\UU(a\times b)\to
T\UU(b)$ are open systems.   Here we used the fact that $T\UU(a\times
b) = T(\UU(a) \times \UU(b))$  since the forgetful functor $\UU$
preserves products.     Again, since $\UU$ preserves products and
since the canonical projections $\pr_1:\UU(a)\times \UU(b)\to \UU(a)$,
$\pr_2:\UU(a)\times \UU(b) \to \UU(b)$ are surjective submersions, the
structure 
maps $\pi_1:a\times b\to a$, $\pi_2:a\times b\to b$ are hybrid
surjective submersions.    Denote by $\Delta$ the diagonal map
\[
a\times b \to (a\times b)\times (a\times b).
\]
The diagram of hybrid phases spaces and their maps
\[
\xy
(-15, 6)*+{a\times b} ="1"; 
(15, 6)*+{(a\times b)\times (a\times b)} ="2";
(-15,-6)*+{a\times b}="3";
(15, -6)*+{a\times b}="4";
{\ar@{->}^{\Delta\qquad } "1";"2"};
{\ar@{->}_{\id } "3";"4"};
{\ar@{->}_{\id} "1";"3"};
{\ar@{->}^{\pi_1\times \pi_2} "2";"4"};
\endxy
\]
commutes.   Hence
\[
\psi := (\Delta, \id) :(a\times b\xrightarrow{\id}a\times
b)\to ((a\times b \xrightarrow{\pi_1} a)\times (a\times
b)\xrightarrow{\pi_2} b)
\]
is an interconnection map between two hybrid surjective submersions.
Since $\UU$ is a product preserving functor, $\UU(\Delta):\UU(a\times
b) \to \UU((a\times b)\times (a\times b)) = \UU(a\times b) \times
\UU(a\times b)$ is also the diagonal map.   We have seen in
Example~\ref{rmrk:vf_on_product} that $X = \varphi^* (X_1\times X_2)$
where $\varphi = \UU(\psi)$.   Therefore the hybrid dynamical system
$(a\times b, X)$ is obtained by interconnecting two hybrid open
systems $(a\times b \to a, X_1)$ and $(a\times b\to b, X_2)$.
\end{example} 

\begin{definition}[The double category $\HySSub^\Box$ of
 hybrid surjective  submersions]\label{def:ssub_box}
  The objects of the double category $\HySSub^\Box$ are hybrid surjective
  submersions.  The horizontal 1-morphisms are morphisms of surjective
  submersions.The vertical
  1-morphisms are interconnection morphisms (Definition~\ref{def:hy_int}). 
    The 2-cells of $\SSub^\Box$ are commuting squares 
\[
  \xy
(-10,10)*+{c}="1";
(10, 10)*+{a}="2";
(-10,-5)*+{d}="3";
(10, -5)*+{b}="4";
{\ar@{<-}^{\mu} "1";"2"};
{\ar@{->}_{g} "1";"3"};
{\ar@{->}^{f} "2";"4"};
{\ar@{<-}_{\nu} "3";"4"};
  \endxy
\]
in the category $\HySSub$ of hybrid surjective submersions, where
$\mu,\nu$ are maps of hybrid submersions and $f$, $g$ are the
interconnection morphisms.  In particular the category of objects of
the double category $\HySSub^\Box$ is the category $\HySSub^\inter$ of
hybrid surjective submersions and interconnection morphisms.
\end{definition}

\begin{remark}
It is easy to check that  we have a forgetful functor from the double
category $\HySSub^\Box$ to the double category $\SSub^\Box$.  Again we
denote it by $\UU$.  Thus $\UU:\HySSub^\Box \to \SSub^\Box$.
\end{remark}

\section{Networks of hybrid open systems}

We start this section by reviewing the results of \cite{L2}  on
networks of open systems.  We then generalize the results of \cite{L2}
to networks of {\em hybrid} open systems, which is the main point of
this paper. 
Recall the definition of a network of open systems from \cite{L2}:
\begin{definition} \label{def:network_os}
  A {\sf network of open systems} is a pair $(X\xrightarrow{\tau}
  \SSub, b\xrightarrow{\psi}  \Pi(\tau)  )$ where $\tau$ is an element
  of $(\FinSet/\SSub)^\Leftarrow$ (Definition~\ref{def:cat_of_lists}),
  that is, a  list of surjective submersions indexed by the finite set
  $X$, 
  $\Pi: ((\FinSet/\SSub)^\Leftarrow)^\op \to \SSub$ is the product
  functor (Remark~\ref{rmrk:2.41}) so that $\Pi(\tau) = \prod _{x\in
    X} \tau(x)$, 
  and $\psi$ is an interconnection morphism (Definition~\ref{def:ssub}).
\end{definition}

\begin{example} \label{ex:6.2}
Let $M_1, M_2$ be two manifolds with corners, $M_1\times M_2 \xrightarrow{\pi_1}
M_1$, $M_1\times M_2 \xrightarrow{\pi_2}
M_2$ the two associated submersions and $\varphi: (M_1\times
M_2\xrightarrow{\id}M_1\times M_2)\to (M_1\times M_2 \xrightarrow{\pi_1}
M_1)\times(M_1\times M_2 \xrightarrow{\pi_2} M_2)$ be the
interconnection map as in Example~\ref{rmrk:vf_on_product}.   This
data is 
a network of open systems.  Namely,
the indexing set $X$ is the two element set $\{1,2\}$. The map
$\tau:X\to \SSub$ is given by
\[
\tau(i) = (M_1\times M_2\xrightarrow{\pi_i}M_i),\qquad  i=1,2.
\]
The submersion $b$ is $ (M_1\times M_2 \xrightarrow{\id} M_1\times
M_2)$ and $\psi$ is the interconnection map $\varphi$ above.
\end{example}

\begin{example}\label{ex:6.3}
Fix two manifolds with corners $M$ and $U$.  As we have seen in
Example~\ref{ex:intercon} a map $s:M\to U$ defines an interconnection
map $\nu: (M\xrightarrow{\id}M)\to (U\times M\to M)$ with
$\nu_\tot (m) = (s(m), m)$.   Consider $\mu:\{*\} \to \SSub$
given by $\mu(*) = (U\times M\to M)$.   Then $(\mu, \nu:
(M\xrightarrow{\id}M)\to (U\times M\to M) = \Pi(\mu))$ is a
network of open systems.
 \end{example} 

\begin{example}\label{ex:6.4}
Let $X=\{1,2,3\}$ be a set with three elements, $M,U$ two manifolds
with corners and $s:M\to U$ a smooth map.   Consider a list $\tau:X\to
\SSub$ of surjective submersions defined by
\[
\tau(i) = (M\times U\to M) 
  \]
for $i=1,2,3$.   Let $b = (M\xrightarrow{\id}M)^3$.  We define a smooth map $\psi_\tot :M^3 \to
(M\times U)^3 = \Pi(\tau)_\tot$ by
\[
\psi_\tot (m_1, m_2, m_3) = ((m_1, s(m_2)), (m_2, s(m_1)), (m_3,
s(m_2))).
\]
Then
$\psi = (\psi_\tot, \id_{M^3}): (M\xrightarrow{\id}M)^3 \to \Pi
(\tau))$ is an interconnection map.  Hence the pair
$(\tau:X\to \SSub, \psi: b=(M\xrightarrow{\id}M)^3 \to \Pi(\tau))$ is
a network.

Note that since $\psi$ is an interconnection map, it induces a
linear map
\[
\psi^*:\Crl (\Pi(\tau)) \to \Crl
((M\xrightarrow{\id}M)^3) = \scrX (M^3)
\]
(see Definition~\ref{def:2.35}).   Consequently any three open systems
$w_1,w_2, w_3\in \Crl(M\times U\to M)$ give rise to  an open system
$w_1\times w_2 \times  w_3: (M\times U)^3 \to TM^3$.   Applying the
map $\psi^*$ we get   $v= \psi^*
(w_1\times w_2 \times  w_3) $, which is an element of $\scrX (M^3)$.
That is, $v$ a vector field on $M^3$.   It is not hard to see that
\[
  v(m_1,m_2, m_3) = %
  (w_1(m_1, s(m_2)), w_2(m_2, s(m_1)), w_3(m_3, s(m_2)))
\]
for all $(m_1, m_2, m_3)\in M^3$.     Intuitively the dynamical system
$(M^3, v)$ is made up of three interacting (open) subsystems with the
first driving the second, the second driving the first and the third.
It is in this sense that the network of open systems as defined above
is a pattern of interconnections of open systems.
We can picture the relation between the subsystems as a 
graph:
\begin{center}
\tikzset{every loop/.style={min distance=15mm,in=-30,out=30,looseness=10}}
\begin{tikzpicture}[->,>=stealth',shorten >=1pt,auto,node distance=2cm,
  thick,main node/.style={circle,draw,font=\sffamily\bfseries}]
  \node[main node] at (2,-1) (1) {1};
  \node[main node] at (4,-1) (2) {2};
  \node[main node] at (6,-1) (3) {3};
  \path[]
  (1) edge [bend left] node {{}} (2)
  (2) edge [bend left] node [below] {{}} (1)
  (2) edge  node {{}}(3)  ;
\end{tikzpicture}.
\end{center}
The arrow from node 1 to node 2 indicates that the first subsystem drives
the second.  The two arrows coming out of node 2 indicate that the
second subsystem drives the first and the third.  Thus our notion of a
network of open systems refines the common idea of a network as a
directed graph.
\end{example}    

\begin{definition}[Maps between networks of open systems]
\label{def:map_of_networks} 
A {\sf map} from a network $(X\xrightarrow{\tau} \SSub, b\xrightarrow{\psi}\Pi
(\tau) )$ of open systems to a network
$(Y\xrightarrow{\mu}\SSub, c\xrightarrow{\nu}\Pi (\mu)$
is a  pair $((\varphi, \Phi), f)$ where $(\varphi, \Phi):\tau \to \mu$
is a morphism in $(\FinSet/\SSub)^\Leftarrow$ and $f:c\to b$ is a
morphism in $\SSub$  so that the diagram %
\[
\xy
(-12, 10)*+{\Pi(\mu)} ="1"; 
(12, 10)*+{\Pi(\tau)} ="2";
(-12,-10)*+{c }="3";
(12,-10)*+{b }="4";
{\ar@{->}^{\nu} "3";"1"};
{\ar@{->}^{\Pi (\varphi, \Phi)} "1";"2"};
{\ar@{->}_{\psi} "4";"2"};
{\ar@{->}_{f} "3";"4"};
\endxy 
\]
commutes in the category $\SSub$ of surjective submersions.  That is, the diagram  is a
2-cell in the double category $\SSub^\Box$ with the source $f$ and target
$\Pi (\varphi, \Phi)$.  Here as before
$\Pi: (\FinSet/\SSub)^\Leftarrow \to \SSub$ is the product functor
(Remark~\ref{rmrk:2.41}).
\end{definition}

\begin{example} \label{ex:6.6}
Let $(\tau: \{1,2,3\}\to \SSub, \psi: (M\to M)^3\to \Pi(\tau))$ be the
network of Example~\ref{ex:6.4} and $(\mu:\{*\}\to \SSub, \nu:(M\to
M)\to \Pi(\mu))$ the network of Example~\ref{ex:6.3}.   Define
$f:(M\to M) \to (M\to M)^3$ to be the diagonal map.  Define the map of
lists $(\varphi, \Phi): \tau \to \mu$ in $(\FinSet/\SSub)^\Leftarrow$
by setting $\varphi(i) = *$ for all $i$ and defining $\Phi_i: \mu
(\varphi(i)) \to \tau(i)$ to be the identity map for all $i\in
\{1,2,3\}$.  It is not hard to check that  $((\varphi,\Phi), f):
(\tau,\psi) \to (\mu, \nu)$ is a map of networks of open systems.  It can be pictured as a map of graphs:
\begin{center}
\tikzset{every loop/.style={min distance=15mm,in=-30,out=30,looseness=10}}
\begin{tikzpicture}[->,>=stealth',shorten >=1pt,auto,node distance=2cm,
  thick,main node/.style={circle,draw,font=\sffamily\bfseries}]

  \node[main node] at (2,-1) (1) {1};
  \node[main node] at (4,-1) (2) {2};
  \node[main node] at (6,-1) (3) {3};
  \node at (1,-1) (rb){};
  \node at (-1,-1) (lb){};
  \node[main node] at (-3,-1) (circ){*};

  \path[]
  (1) edge [bend left] node {} (2)
  (2) edge [bend left] node [below] {} (1)
  (2) edge  node {}(3)
  (circ) edge [loop right=20] node {} (circ)
  (rb) edge node [above] {$\varphi$} (lb) ;
\end{tikzpicture}.
\end{center}

Note that given an open system $w\in \Crl (M\times U\to M)$, $u=
\nu^*w$ is a vector field on $M$ and $v= \psi^* (w\times  w \times w)$
is a vector field on $M^3$.   It is easy to check directly that the
vector field $u$ is $f$-related to the vector field $v$.   In other
words for any choice of an open system $w\in \Crl(M\times U\to M)$ we get a map of
continuous time
dynamical systems
\[
f: (M, \nu^*w) \to (M^3, \psi^* (w\times w\times w)).  
\]
Consequently for any choice of an open system $w\in \Crl(M\times U\to
M)$ the vector field $v= \psi^* (w\times  w \times w)$ on $M^3$ has
the diagonal $f(M)\subset M^3$ as an invariant subsystem.
 The fact that $f$ is a map of dynamical systems is 
special case of Theorem~\ref{thm:main_inL2} below.
\end{example}

Our definitions of networks of hybrid systems and maps of networks of
hybrid systems are analogous to the two definitions above.

\begin{definition} \label{def:network_hy_os}
  A {\sf network of hybrid open systems} is a pair $(X\xrightarrow{\tau}
  \HySSub, b\xrightarrow{\psi}  \Pi(\tau)  )$ where $\tau$ is an element
  of $(\FinSet/\HySSub)^\Leftarrow$
  (Definition~\ref{def:cat_of_lists}), i.e., a list of hybrid
  surjective submersions indexed by the finite set $X$, $\Pi:
(\FinSet/\HySSub)^\Leftarrow \to \HySSub$ is the product functor
(Remark~\ref{rmrk:2.41}), so that  $\Pi (\tau) = \prod_{x\in X} \tau(x)$,
  and $\psi$ is an interconnection morphism (Definition~\ref{def:hy_int}).
\end{definition}

\begin{definition}[Maps between networks of hybrid open systems]
\label{def:map_of_networks_hy_os} 
A {\sf map} from a network $(X\xrightarrow{\tau} \HySSub, b\xrightarrow{\psi}\Pi
(\tau))$ of hybrid open systems to a network
$(Y\xrightarrow{\mu}\HySSub, c\xrightarrow{\nu}\Pi (\mu))$
is a  pair $((\varphi, \Phi), f)$ where $(\varphi, \Phi):\tau \to \mu$
is a morphism in $(\FinSet/\HySSub)^\Leftarrow$ and $f:c\to b$ is a
morphism in $\HySSub$ so that the diagram 
\[
\xy
(-12, 10)*+{\Pi(\mu)} ="1"; 
(12, 10)*+{\Pi(\tau)} ="2";
(-12,-10)*+{c }="3";
(12,-10)*+{b }="4";
{\ar@{->}^{\nu} "3";"1"};
{\ar@{->}^{\Pi (\varphi, \Phi)} "1";"2"};
{\ar@{->}_{\psi} "4";"2"};
{\ar@{->}_{f} "3";"4"};
\endxy 
\]
it commutes
in $\HySSub$ of hybrid surjective submersions.  That is,  the diagram is a 
is a 2-cell in the double category $\HySSub^\Box$ with the source $f$
and target $\Pi (\varphi, \Phi)$. Here as before $\Pi:
(\FinSet/\HySSub)^\Leftarrow \to \HySSub$ is the product functor (Remark~\ref{rmrk:2.41}).
\end{definition}

\begin{example}\label{ex:6.9}\label{ex:6.2h}
Let $a_1,a_2$ be two hybrid phase spaces, $a_1\times a_2\xrightarrow{\pi_1}
a_i$, $i=1,2$ two associated hybrid surjective submersions and $\varphi: (a_1\times
a_2\xrightarrow{\id}a_1\times a_2)\to (a_1\times a_2\xrightarrow{\pi_1}
a_1)\times(a_1\times a_2 \xrightarrow{\pi_2} a_2)$ be the
interconnection map as in Example~\ref{ex:hds_on_product}.   This
data is 
a network of hybrid open systems in the following sense.

The indexing set $X$ is the two element set $\{1,2\}$. The map
$\tau:X\to \HySSub$ is given by
\[
\tau(i) = (a_1\times a_2\xrightarrow{\pi_i}i),\qquad  i=1,2.
\]
The hybrid submersion $b$ is $ (a_1\times a_2 \xrightarrow{\id} a_1\times
a_2)$ and $\psi$ is the interconnection map $\varphi = (\varphi_\tot,
\id)$ is defined by setting $\varphi_\tot:(a_1\times a_2)\to
(a_1\times a_2)^2$ to be the diagonal map (recall that the diagonal
maps make sense in any category with finite products).
\end{example}

\begin{example} \label{ex:6.3h}\label{ex:6.10} This example is a
  hybrid analogue of Example~\ref{ex:6.3}.   Let $m, u$ be two hybrid
  phase spaces and $s:m \to u$ a map of hybrid phase spaces.   Define a
  map $\nu: (m\xrightarrow{\id} m)\to (m\times u
  \xrightarrow{\pi_1}m)$ of hybrid surjective submersions by setting
  $\nu_\tot = (\id_{m}, s)$ and $\nu_\st =\id_{m}$.  Then
  $\nu$ is an interconnection map.    Define $\mu:\{*\}\to \HySSub$ by
  setting $\mu(*) = (m\times u\to m)$.   Then $(\mu, \nu:
  (m\xrightarrow{\id} m)\to \Pi (\mu))$ is a network of hybrid
  open systems.
  \end{example}

\begin{example} \label{ex:6.4h}\label{ex:6.11} This example is a
  hybrid analogue of Example~\ref{ex:6.4}.

  Let $X=\{1,2,3\}$ be a set with three elements, $m,u$ two hybrid
  phase spaces and $s:m\to u$ a map of hybrid phase spaces.   Consider a list $\tau:X\to
\HySSub$ of hybrid surjective submersions defined by
\[
\tau(i) = (m\times u\to m) 
  \]
for $i=1,2,3$.   Then $\Pi(\tau) =(m\times u\to m)^3 = (m^3 \times u^3
\to m^3)$. Let $b = (m\xrightarrow{\id}m)^3$.   We define an
interconnection map $\psi: b\to \Pi(\tau)$ by making use of the
functor $\Pi:((\FinSet/\HyPh)^\Leftarrow)^\op \to \HyPh$ (see
Remark~\ref{rmrk:2.41} and Example~\ref{ex:exPiphi2}).

Namely consider the map $\varphi:\{1,2,3\}\to \{1,2,3\}$ which is defined by 
  \[
\varphi(1) = 2,\qquad \varphi(2) = 1, \qquad \varphi(3) =2.
\]
Define $\alpha, \beta:\{1,2,3\}\to \HyPh$ by setting
\[
\alpha (i) = u, \qquad \beta (j) = m\qquad \textrm{for all }i,j\in \{1,2,3\}.
\]
Define $(\varphi, \Phi): \alpha \to \beta$ in
$(\FinSet/\HyPh)^\Leftarrow$ by setting $\Phi_i: \beta (\varphi(i))\to
\alpha(i)$ be the map $s:m\to u$ for all $i$.    Now define
$\psi_\tot : m^3 = b_\tot \to \Pi(\tau)_\tot = m^3 \times u^3$ by
setting
\[
\psi_\tot = (\id_{m^3}, \Pi (\varphi, \Phi)).
\]
Then  $\psi$ is an interconnection map of hybrid surjective
submersions and $(\tau:\{1,2,3\}\to \HySSub, \psi:
b=(m\xrightarrow{\id}m)^3 \to \Pi(\tau))$ is a network of hybrid open systems.
Note that the map $\UU(\psi_\tot): \UU(m)^3 \to \UU(m)^3 \times \UU
(u)^3$ of manifolds with corners is given by
\[
(x_1, x_2, x_3) \mapsto (x_1, x_2, x_3, s(x_2), s(x_1), s(x_3)).
\]
This follows from Corollary~\ref{cor:4.12} and Example~\ref{ex:exPiphi2}.
\end{example}

\begin{example} \label{ex:6.6h} \label{ex:6.12} This example is the
  hybrid analogue of Example~\ref{ex:6.6}.

 Let $(\mu, \nu:
  (m\xrightarrow{\id} m)\to \Pi (\mu))$ be the network of hybrid  open
  systems of Example~\ref{ex:6.3h}.   Let $(\tau:\{1,2,3\}\to \HySSub, \psi:
b=(m\xrightarrow{\id}m)^3 \to \Pi(\tau))$ be the network of hybrid
open systems of Example~\ref{ex:6.4h}.  Define $f:(m\to m)\to (m\to
m)^3$ to be the diagonal map.  Define the map of
lists $(\varphi, \Phi): \tau \to \mu$ in $(\FinSet/\HySSub)^\Leftarrow$
by setting $\varphi(i) = *$ for all $i$ and defining $\Phi_i: \mu
(\varphi(i)) \to \tau(i)$ to be the identity map for all $i\in
\{1,2,3\}$.   The pair
$((\varphi,\Phi), f)$ is a map of networks of hybrid open systems from
$(\tau, \psi)$ to $(\mu, \nu)$.   This map of networks can also be
pictured as a map of directed graphs

\begin{center}
\tikzset{every loop/.style={min distance=15mm,in=-30,out=30,looseness=10}}
\begin{tikzpicture}[->,>=stealth',shorten >=1pt,auto,node distance=2cm,
  thick,main node/.style={circle,draw,font=\sffamily\bfseries}]

  \node[main node] at (2,-1) (1) {1};
  \node[main node] at (4,-1) (2) {2};
  \node[main node] at (6,-1) (3) {3};
  \node at (1,-1) (rb){};
  \node at (-1,-1) (lb){};
  \node[main node] at (-3,-1) (circ){*};

  \path[]
  (1) edge [bend left] node {} (2)
  (2) edge [bend left] node [below] {} (1)
  (2) edge  node {}(3)
  (circ) edge [loop right=20] node {} (circ)
  (rb) edge node [above] {$\varphi$} (lb) ;
\end{tikzpicture}.
\end{center}
Note that now we have hybrid phase spaces attached to the nodes of the graphs.
\end{example}
\mbox{}

To state the main result of \cite{L2} and then to state and prove its
generalization to networks of hybrid open systems we need to recall a
few facts from \cite{L2}.  Note first that the direct sum is not a
coproduct in the category $\RelVect$ of vector spaces and linear
relations.  None the less, an analogue of Remark~\ref{rmrk:2.43}
holds.

\begin{proposition}\label{prop:L24.2}
The assignment 
\[
(\FinSet/\RelVect) \ni \tau \mapsto \odot (\tau):= \oplus_{a\in X} \tau(a) \in \RelVect
\]
extends to a lax  functor
\[
\odot: ((\FinSet/\RelVect)^\Leftarrow)^{\op}   \to \RelVect.
\]
\end{proposition}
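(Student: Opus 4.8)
The plan is to define $\odot$ on morphisms by an explicit ``fibre product over the index map'' formula and then check, by routine index bookkeeping, that it is a normal lax functor; this is exactly the argument of \cite[Proposition~4.2]{L2}, and it runs parallel to the lemma above that makes $\Crl\colon(\SSub^\Box)^{\op}\to\RelVect^\Box$ lax. First I would unpack a morphism $(\varphi,\Phi)\colon\tau\to\mu$ in $(\FinSet/\RelVect)^\Leftarrow$, say with $\tau\colon X\to\RelVect$ and $\mu\colon Y\to\RelVect$: it is a map of finite sets $\varphi\colon X\to Y$ together with, for each $x\in X$, a linear relation $\Phi_x\subseteq\mu(\varphi(x))\times\tau(x)$ (the naturality clause is vacuous because $X$ is discrete). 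On objects put $\odot(\tau)=\bigoplus_{x\in X}\tau(x)$, and to the morphism above associate
\[
\odot(\varphi,\Phi)\;:=\;\Big\{\big((v_y)_{y\in Y},\,(w_x)_{x\in X}\big)\in\textstyle\bigoplus_{y\in Y}\mu(y)\times\bigoplus_{x\in X}\tau(x)\ \Big|\ (v_{\varphi(x)},w_x)\in\Phi_x\ \text{for all }x\in X\Big\}.
\]
This is a linear subspace of $\odot(\mu)\times\odot(\tau)$, being cut out by the linear conditions $(\mathrm{pr}_{\varphi(x)}\times\mathrm{pr}_x)(\,\cdot\,)\in\Phi_x$ with $\mathrm{pr}$ the coordinate projections, so it is a $1$-morphism $\odot(\mu)\to\odot(\tau)$ in $\RelVect$. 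Note the variance: a morphism $\tau\to\mu$ of lists yields a relation $\odot(\mu)\to\odot(\tau)$, so $\odot$ is defined on $\big((\FinSet/\RelVect)^\Leftarrow\big)^{\op}$, matching the conventions used for $\Pi$ in Remark~\ref{rmrk:2.41}. This assignment restricts to $\tau\mapsto\bigoplus\tau(x)$ on objects, so it ``extends'' the stated assignment.

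Second I would check normality: the identity of $\tau$ is $(\id_X,\{\id_{\tau(x)}\}_x)$, whose relations $\Phi_x$ are the diagonals of $\tau(x)\times\tau(x)$, and then $\odot(\id_X,\mathrm{id})=\{((w_x)_x,(w_x)_x)\}$ is the diagonal of $\odot(\tau)\times\odot(\tau)$, i.e.\ the identity relation $\id_{\odot(\tau)}$. Next, for composable morphisms $(\varphi,\Phi)\colon\tau\to\mu$ and $(\psi,\Psi)\colon\mu\to\sigma$ in $(\FinSet/\RelVect)^\Leftarrow$, whose composite there is $(\psi\circ\varphi,\Xi)$ with $\Xi_x=\Phi_x\circ\Psi_{\varphi(x)}$ (composition of relations), I claim
\[
\odot(\varphi,\Phi)\circ\odot(\psi,\Psi)\ \subseteq\ \odot(\psi\circ\varphi,\Xi)
\]
as linear relations $\odot(\sigma)\to\odot(\tau)$. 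Indeed, if $\big((t_z)_z,(w_x)_x\big)$ lies in the left-hand composite there is $(v_y)_y\in\odot(\mu)$ with $(t_{\psi(y)},v_y)\in\Psi_y$ for all $y$ and $(v_{\varphi(x)},w_x)\in\Phi_x$ for all $x$; taking $y=\varphi(x)$ and composing the two relations gives $(t_{\psi(\varphi(x))},w_x)\in\Phi_x\circ\Psi_{\varphi(x)}=\Xi_x$ for every $x$, which is precisely membership in the right-hand side. (The inclusion can be proper: on the left the intermediate vector $(v_y)_y$ must be globally consistent and constrained also at indices $y\notin\varphi(X)$, whereas on the right each $x$ chooses its own intermediate vector.) Viewed as a globular $2$-cell in $\RelVect$, this inclusion is the component $(\odot_*)_{(\psi,\Psi),(\varphi,\Phi)}$ of the structure transformation, in the sense of the $\int$-construction recalled before Definition~\ref{def:cat_of_elements}.

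Finally I would observe that there is nothing left to verify: between any two parallel $1$-morphisms of $\RelVect$ there is at most one $2$-cell (an inclusion of linear subspaces), so $\odot_*$ is automatically a natural transformation and the coherence/associativity axiom of Definition~\ref{def:lax-1-mor} — an equation between two $2$-cells with equal source and target — holds trivially. Therefore $\odot\colon\big((\FinSet/\RelVect)^\Leftarrow\big)^{\op}\to\RelVect$ is a normal lax functor. I do not expect a real obstacle here; the only thing requiring care is getting the index map $\varphi$ into the right slots in the composition formula and tracking the direction of the inclusion, just as in \cite[Proposition~4.2]{L2}.
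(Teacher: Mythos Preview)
Your proposal is correct and follows essentially the same approach as the paper: your set-builder formula for $\odot(\varphi,\Phi)$ is exactly the paper's definition $\bigcap_{a\in X}(\pi_{\varphi(a)}\times\pi_a)^{-1}(\Phi_a)$, and where the paper simply refers the reader to \cite[Proposition~5.8]{L2} for the verification of lax functoriality, you carry out that verification explicitly (normality, the inclusion $\odot(\varphi,\Phi)\circ\odot(\psi,\Psi)\subseteq\odot(\psi\circ\varphi,\Xi)$, and the observation that coherence is automatic because $\RelVect$ is locally posetal).
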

\begin{proof} Given a 2-commuting triangle 
\[
\xy
(-10, 10)*+{X} ="1"; 
(10, 10)*+{Y} ="2";
(0,-2)*+{\RelVect }="3";
{\ar@{->}_{\tau} "1";"3"};
{\ar@{->}^{\varphi} "1";"2"};
{\ar@{->}^{\mu} "2";"3"};
{\ar@{=>}_{\scriptstyle \Phi} (4,6)*{};(-0.4,4)*{}} ; 
\endxy 
\]
we set
\begin{equation}\label{eq:5*}
\odot (\varphi, \Phi):= \bigcap _{a\in X} (\pi_{\varphi(a)} \times
\pi_{a})\inv (\Phi_a),
\end{equation}
where the relations $\Phi_a:\mu(\varphi(a))\to\tau (a)$ are
the component relations of the natural transformation $\Phi: \mu\circ
\varphi \Rightarrow \tau$ and 
\[
\pi_{\varphi (a)}\times \pi_{a}: \oplus_{y\in Y}\mu(y) \times \oplus
_{x\in X} \tau(x) \to \mu(\varphi((a))\times \mu(\tau(a)) 
\]
are the projections. It remains to show that $\odot$ is actually a
functor.  We refer the reader to the proof of
\cite[Proposition~5.8]{L2} for such an argument.  Note that \cite{L2}
unconventionally views a linear relation
$\Phi_a:\mu (\varphi(a)) \to \tau(a)$ as a subspace of
$\tau(a)\times \mu(\varphi(a))$ and not of
$\mu(\varphi((a))\times \mu(\tau(a))$.
 \end{proof}

Next we recall that the control functor $\Crl:\SSub \to \RelVect$ is monoidal,
where the monoidal structure on the category $\SSub$ is the
categorical product and on $\RelVect$ is the direct sum.  In
particular, for any two surjective submersions $a,b$ we have a linear map
\[
\Crl_{a,b}: \Crl(a)\oplus \Crl(b) \to \Crl(a\times b)
\]
which is given by
\[
\Crl_{a,b}(F,G) := F\times G
\]
for all $(F,G)\in \Crl(a)\oplus \Crl(b)$.  It follows that for any 
{\bf ordered} list
$a:\{1,\ldots, n\}\to \SSub$ of submersions we have a canonical linear
map
\[
\Crl_a: \bigoplus_{i=1}^n \Crl(a(i))\to \Crl (\prod _{i=1}^n a(i)).
\]
It is given by 
\begin{equation}\label{eq:L25.2}
(\psi_1,\ldots, \psi_n)\mapsto \psi_1\times \cdots \times \psi_n.
\end{equation}

\begin{lemma}\label{lem:L25.14} 
For any unordered  list $\tau: X\to \SSub$ we have a canonical
linear map
\begin{equation} \label{eq:L25.16}
 \Crl_\tau: \bigoplus _{x\in X}\Crl(\tau(x))\to \Crl(\prod_{x\in X} \tau(x)) 
\end{equation}
so that if  $X=\{1,\ldots,n\}$ then $\Crl_\tau$ is given by  \eqref{eq:L25.2}.
\end{lemma}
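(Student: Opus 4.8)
The plan is to construct $\Crl_\tau$ directly from the universal property of the direct sum as a coproduct in $\Vect$, in a way that makes manifest that no ordering of $X$ is used, and then to check it restricts to \eqref{eq:L25.2}. The starting point is the elementary observation that for an \emph{ordered} list $a\colon\{1,\dots,n\}\to\SSub$ the map $\Crl_a$ of \eqref{eq:L25.2} is linear and decomposes as
\[
\Crl_a(\psi_1,\dots,\psi_n)=\sum_{i=1}^{n}\Crl_a(0,\dots,0,\psi_i,0,\dots,0).
\]
Indeed, the vector space structures on each $\Crl(a(i))$ and on $\Crl(\prod_i a(i))$ are both ``componentwise'' (because $T(\prod_i a(i)_\st)=\prod_i Ta(i)_\st$ and addition of tangent vectors is componentwise), so this is immediate from the definition of $\times$ on open systems together with the monoidality $\Crl_{a,b}(F,G)=F\times G$ recalled just before the lemma. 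Consequently $\Crl_a$ is the unique linear map whose restriction along the $i$-th canonical injection $\Crl(a(i))\hookrightarrow\bigoplus_j\Crl(a(j))$ is the linear map $\jmath_i\colon\Crl(a(i))\to\Crl(\prod_j a(j))$ sending $\psi$ to the open system on $\prod_j a(j)$ whose $i$-th component is $\psi\circ\pr_{i,\tot}$ and whose $j$-th component, for $j\ne i$, is the appropriate pullback of the zero section of $Ta(j)_\st$; here $\pr_j\colon\prod_k a(k)\to a(j)$ are the canonical projections of the product in $\SSub$.

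Now for a general finite unordered list $\tau\colon X\to\SSub$ I would observe that the maps $\jmath_x\colon\Crl(\tau(x))\to\Crl\bigl(\prod_{x'\in X}\tau(x')\bigr)$ described above make sense verbatim, since they refer only to the canonical projections $\pr_{x'}\colon\prod_{x''\in X}\tau(x'')\to\tau(x')$ of the categorical product, and involve no enumeration of $X$. Because $X$ is finite, $\bigoplus_{x\in X}\Crl(\tau(x))$ is the biproduct, in particular a coproduct in $\Vect$; so the family $\{\jmath_x\}_{x\in X}$ induces a unique linear map, which we take to be the map \eqref{eq:L25.16}:
\[
\Crl_\tau\colon\ \bigoplus_{x\in X}\Crl(\tau(x))\ \longrightarrow\ \Crl\Bigl(\prod_{x\in X}\tau(x)\Bigr).
\]
When $X=\{1,\dots,n\}$, applying the decomposition of the first paragraph gives $\Crl_\tau(\psi_1,\dots,\psi_n)=\sum_i\jmath_i(\psi_i)=\psi_1\times\cdots\times\psi_n$, so \eqref{eq:L25.2} is recovered exactly.

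Finally I would record why $\Crl_\tau$ deserves the name ``canonical'': for any bijection $\iota\colon\{1,\dots,n\}\xrightarrow{\sim}X$ the evident square relating $\Crl_{\tau\circ\iota}$ to $\Crl_\tau$ — through the reindexing isomorphism $\bigoplus_i\Crl(\tau(\iota(i)))\cong\bigoplus_{x\in X}\Crl(\tau(x))$ on the left and the image under $\Crl$ of the reindexing isomorphism $\prod_i\tau(\iota(i))\cong\prod_{x\in X}\tau(x)$ in $\SSub$ on the right — commutes, because both $\jmath$-families are phrased purely via the canonical projections and hence match under the coproduct universal property. In particular the ``pick an ordering and use \eqref{eq:L25.2}'' recipe yields the same map regardless of the chosen ordering. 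I do not expect a genuine obstacle here; the only point that needs care is the bookkeeping that identifies a zero-padded product of open systems described componentwise with the corresponding value of \eqref{eq:L25.2}, and this reduces to the componentwise nature of the vector space structures and of the symmetry of the monoidal functor $\Crl\colon\SSub\to\RelVect$.
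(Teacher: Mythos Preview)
Your argument is correct: constructing $\Crl_\tau$ via the coproduct universal property of $\bigoplus_{x\in X}\Crl(\tau(x))$ in $\Vect$ from the ``single-slot'' maps $\jmath_x$ is exactly the right idea, and your verification that it recovers \eqref{eq:L25.2} and is independent of any ordering is clean. The paper itself does not give a proof here but simply cites \cite[Lemma~5.15]{L2}, so a direct comparison of approaches is not possible; that said, your construction is the natural one and is almost certainly what the cited argument does.
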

\begin{proof}
See \cite[Lemma~5.15]{L2}.
  \end{proof}
 
We can now state the main result of \cite{L2}.
\begin{theorem}\label{thm:main_inL2}
  A map 
\[
((\varphi,\Phi), f):(\tau:X\to \SSub,
  \psi:b\to\Pi(\tau)) \to (\mu:Y\to
  \SSub, \nu:c\to\Pi (\mu))
\]
of networks of open systems 
gives rise to a 2-cell
\[
 \xy
 (-25, 10)*+{\bigoplus_{y\in Y}\Crl(\mu(y) )} ="1"; 
(25, 10)*+{\bigoplus_{x\in X} \Crl(\tau(x))}="2";
 (-25,-10)*+{\Crl(c)}="3";
(25,-10)*+{\Crl(b) }="4";
{\ar@{->}_ {\nu^*\circ \Crl_\mu}  "1";"3"};
 {\ar@{->}^{\odot  (\varphi, \Crl\circ \Phi)} "1";"2"};
 {\ar@{<-}_{\psi^*\circ \Crl_\tau} "4";"2"};
 {\ar@{->}_{\Crl(f)} "3";"4"};
{\ar@{<=} (0,-2)*{};(0,2)*{}} ; 
 \endxy 
\]
 in the double category $\RelVect^\Box$ of vector spaces, linear maps and
linear relations. (The functor $\odot$ is defined in Proposition~\ref{prop:L24.2},
 the maps $\Crl_\mu, \Crl_\tau$ come from Lemma~\ref{lem:L25.14},
and the pullback maps $\nu^*, \psi^*$ are from Definition~\ref{def:2.35}.)
\end{theorem}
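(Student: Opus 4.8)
The plan is to peel off all the structure maps and reduce the claim to a single transparent containment of linear subspaces, and then to invoke two results already available: Lemma~\ref{lem:L25.14} on the canonical ``gluing'' maps $\Crl_{\bullet}$, and the lemma above (\cite[Lemma~8.12]{L2}) that pullbacks along interconnection maps preserve relatedness of control systems. Concretely, producing a $2$-cell in $\RelVect^\Box$ with the indicated corners, vertical linear maps $\nu^{*}\circ\Crl_{\mu}$ and $\psi^{*}\circ\Crl_{\tau}$, and horizontal relations $\odot(\varphi,\Crl\circ\Phi)$ (source) and $\Crl(f)$ (target) amounts, by the definition of a $2$-cell in $\RelVect^\Box$, to verifying
\[
\bigl((\nu^{*}\circ\Crl_{\mu})\times(\psi^{*}\circ\Crl_{\tau})\bigr)\bigl(\odot(\varphi,\Crl\circ\Phi)\bigr)\ \subseteq\ \Crl(f).
\]
So I would fix a pair $\bigl((w_{y})_{y\in Y},(v_{x})_{x\in X}\bigr)\in\odot(\varphi,\Crl\circ\Phi)$ and show that its image $\bigl(\nu^{*}(\prod_{y\in Y}w_{y}),\,\psi^{*}(\prod_{x\in X}v_{x})\bigr)$ lies in $\Crl(f)$, where I have used $\Crl_{\mu}((w_{y}))=\prod_{y}w_{y}$ and $\Crl_{\tau}((v_{x}))=\prod_{x}v_{x}$ (Lemma~\ref{lem:L25.14}).

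The first step is to identify the relation $\Crl(\Pi(\varphi,\Phi))$ at the level of the glued systems. By Remark~\ref{rmrk:2.41}, $\Pi(\varphi,\Phi)\colon\Pi(\mu)\to\Pi(\tau)$ is the unique map satisfying $\pi_{x}\circ\Pi(\varphi,\Phi)=\Phi_{x}\circ\pi_{\varphi(x)}$ for every $x\in X$; hence on total spaces it sends $(q_{y})_{y}\mapsto\bigl((\Phi_{x})_{\tot}(q_{\varphi(x)})\bigr)_{x}$, and likewise on state spaces. Feeding this into the commuting-square condition of Definition~\ref{def:Crl(f)}, and using that $T(-)$, the canonical projections, and the maps $\Phi_{x}$ all respect finite products, a routine diagram chase shows that $\prod_{y}w_{y}\in\Crl(\Pi(\mu))$ is $\Pi(\varphi,\Phi)$-related to $\prod_{x}v_{x}\in\Crl(\Pi(\tau))$ \emph{iff} $(w_{\varphi(x)},v_{x})\in\Crl(\Phi_{x})$ for every $x\in X$; the point is that the coordinate $q_{\varphi(x)}$ sweeps out all of $\mu(\varphi(x))_{\tot}$, which lets one pass from equality of tuples to the pointwise equalities. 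But ``$(w_{\varphi(x)},v_{x})\in\Crl(\Phi_{x})$ for all $x$'' is exactly the membership condition in $\odot(\varphi,\Crl\circ\Phi)=\bigcap_{x}(\pi_{\varphi(x)}\times\pi_{x})^{-1}(\Crl(\Phi_{x}))$ of \eqref{eq:5*}. So the chosen pair yields $F:=\prod_{y}w_{y}$ and $G:=\prod_{x}v_{x}$ with $F$ being $\Pi(\varphi,\Phi)$-related to $G$.

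To conclude, recall that by Definition~\ref{def:map_of_networks} the data of the map of networks include the commuting square $\psi\circ f=\Pi(\varphi,\Phi)\circ\nu$ in $\SSub$, in which $\nu$ and $\psi$ are interconnection maps and $f$, $\Pi(\varphi,\Phi)$ are ordinary maps of surjective submersions. Applying the lemma above (\cite[Lemma~8.12]{L2}) to this square with the $\Pi(\varphi,\Phi)$-related pair $(F,G)$ gives that the interconnected systems $\nu^{*}F$ and $\psi^{*}G$ are $f$-related, i.e. $(\nu^{*}F,\psi^{*}G)\in\Crl(f)$. Since $\nu^{*}F=(\nu^{*}\circ\Crl_{\mu})((w_{y}))$ and $\psi^{*}G=(\psi^{*}\circ\Crl_{\tau})((v_{x}))$, this is precisely the required containment, and the theorem follows.

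The main obstacle I anticipate is the first step: checking, with full attention to the bookkeeping, that the monoidal coherence maps $\Crl_{\mu}$ and $\Crl_{\tau}$ intertwine the combinatorially defined relation $\odot(\varphi,\Crl\circ\Phi)$ with $\Crl(\Pi(\varphi,\Phi))$. One must keep straight the unordered indexing sets $X$, $Y$, the possibly non-injective and non-surjective reindexing $\varphi$, and the duplication/forgetting built into $\Pi(\varphi,\Phi)$, and verify that the ``canonical'' identification $\Crl_{\mu}((w_{y}))=\prod_{y}w_{y}$ from Lemma~\ref{lem:L25.14} is compatible with all of these; once that identification is in place, everything downstream is a direct application of results in hand. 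This is, of course, the content of \cite[Theorem~9.5]{L2}, to which one may simply refer.
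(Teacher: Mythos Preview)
Your sketch is correct, but note that the paper does not prove this theorem at all: it is quoted verbatim as the main result of \cite{L2} (stated there as Theorem~9.5) and used as a black box in the proof of Theorem~\ref{thm:main_result}. Your final sentence acknowledges this, and indeed the argument you outline---reducing the $2$-cell condition to the containment $\bigl((\nu^{*}\circ\Crl_{\mu})\times(\psi^{*}\circ\Crl_{\tau})\bigr)(\odot(\varphi,\Crl\circ\Phi))\subseteq\Crl(f)$, showing that $\Crl_\mu$ and $\Crl_\tau$ carry $\odot(\varphi,\Crl\circ\Phi)$ into $\Crl(\Pi(\varphi,\Phi))$ via the componentwise characterisation of $\Pi(\varphi,\Phi)$, and then invoking \cite[Lemma~8.12]{L2} on the commuting square $\psi\circ f=\Pi(\varphi,\Phi)\circ\nu$---is essentially how the result is established in \cite{L2}. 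For the purposes of the present paper, the citation alone suffices.
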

\mbox{}

We are now in position to state and prove  the main result of the paper
\begin{theorem}[Main theorem]\label{thm:main_result}
  A map
  $\big((\varphi,\Phi),f\big):
  (\tau:X\rightarrow\HySSub,\psi:b\rightarrow\Pi(\tau)
  )\rightarrow(\mu:Y\rightarrow\HySSub,\nu:c\rightarrow\Pi(\mu) )$ of
  networks of open systems gives rise to a 2-cell
\[
\xy
(-35, 10)*+{\oplus_{y\in Y}\Crl ((\UU ( \mu(y)))} ="1"; 
(35, 10)*+{\oplus_{x\in X}\Crl(\UU (\tau(x) ))} ="2";
(-35,-10)*+{\Crl(\UU(c)) }="3";
 (35,-10)*+{\Crl(\UU(b)) }="4";
{\ar@{->}_{(\UU(\nu))^* \circ \Crl_{\UU\circ \mu} }"1";"3"};
{\ar@{->}^{\odot  (\varphi, \Crl \circ \UU \circ \Phi))} "1";"2"};
 {\ar@{->}^{(\UU(\psi))^* \circ \Crl_{\UU\circ \tau}} "2";"4"};
 {\ar@{->}_{\Crl(\UU(f))} "3";"4"};
 {\ar@{<=} (0,-2)*{};(0,2)*{}} ; 
\endxy 
\]   
in the double category $\RelVect^\Box$ of vector spaces, linear maps
and relations.  (The functor $\odot$ is defined in Proposition~\ref{prop:L24.2},
 the maps $\Crl_{\UU\circ \mu}, \Crl_{\UU \circ \tau}$ come from
 Lemma~\ref{lem:L25.14}, 
and the pullback maps $(\UU(\nu)^*, \UU(\psi)^*$ are from Definition~\ref{def:2.35}.)
\end{theorem}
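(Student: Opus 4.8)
The plan is to reduce Theorem~\ref{thm:main_result} to its continuous-time predecessor, Theorem~\ref{thm:main_inL2}, by pushing all of the data forward along the forgetful functor $\UU$. The only thing to check is that $\UU$ is well-behaved enough to carry a network of hybrid open systems to a network of (continuous-time) open systems, and a map of the former to a map of the latter; once that is done the desired 2-cell in $\RelVect^\Box$ is produced verbatim by Theorem~\ref{thm:main_inL2}. The relevant good properties of $\UU$ are already in hand: it is product preserving (Remark~\ref{rmrk:4.2}), it sends interconnection morphisms of hybrid surjective submersions to interconnection morphisms of surjective submersions (essentially by definition, Definition~\ref{def:hy_int}), it induces a functor $\UU_*:(\FinSet/\HySSub)^\Leftarrow\to(\FinSet/\SSub)^\Leftarrow$ commuting with the product functors $\Pi$ up to a canonical isomorphism (the lemma preceding Corollary~\ref{cor:4.12}, together with Corollary~\ref{cor:4.12} itself), and it extends to a forgetful double functor $\UU:\HySSub^\Box\to\SSub^\Box$.

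Concretely, I would first observe that, given a network $(\tau:X\to\HySSub,\ \psi:b\to\Pi(\tau))$ of hybrid open systems, the pair $(\UU\circ\tau:X\to\SSub,\ \UU(\psi):\UU(b)\to\Pi(\UU\circ\tau))$ is a network of open systems in the sense of Definition~\ref{def:network_os}: $\UU\circ\tau$ is an object of $(\FinSet/\SSub)^\Leftarrow$, the canonical isomorphism $\UU(\Pi(\tau))\cong\Pi(\UU\circ\tau)$ of Corollary~\ref{cor:4.12} lets us view $\UU(\psi)$ as landing in $\Pi(\UU\circ\tau)$, and $\UU(\psi)$ is an interconnection morphism because $\psi$ is. Likewise a map $((\varphi,\Phi),f):(\tau,\psi)\to(\mu,\nu)$ of networks of hybrid open systems is sent to the pair $((\varphi,\UU\circ\Phi),\UU(f))$, where $(\varphi,\UU\circ\Phi)=\UU_*(\varphi,\Phi)$ is a morphism $\UU\circ\tau\to\UU\circ\mu$ in $(\FinSet/\SSub)^\Leftarrow$ and $\UU(f):\UU(c)\to\UU(b)$ is a morphism in $\SSub$. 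Its 2-cell condition (Definition~\ref{def:map_of_networks}) follows by applying the double functor $\UU$ to the commuting square in $\HySSub$ that defines $((\varphi,\Phi),f)$ and then identifying $\UU(\Pi(\varphi,\Phi))$ with $\Pi(\varphi,\UU\circ\Phi)$ via Corollary~\ref{cor:4.12}.

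Finally I would invoke Theorem~\ref{thm:main_inL2} for the map of networks of open systems $((\varphi,\UU\circ\Phi),\UU(f)):(\UU\circ\tau,\UU(\psi))\to(\UU\circ\mu,\UU(\nu))$. It returns a 2-cell in $\RelVect^\Box$ whose four sides are $(\UU(\nu))^*\circ\Crl_{\UU\circ\mu}$, $\odot(\varphi,\Crl\circ(\UU\circ\Phi))$, $(\UU(\psi))^*\circ\Crl_{\UU\circ\tau}$ and $\Crl(\UU(f))$. Since $(\UU\circ\tau)(x)=\UU(\tau(x))$ and $(\UU\circ\mu)(y)=\UU(\mu(y))$, this is exactly the 2-cell asserted in Theorem~\ref{thm:main_result}.

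The substantive content is thus entirely inherited from \cite{L2}; what remains is bookkeeping, and the step that needs the most care is the second one. There one must track the canonical isomorphisms $\UU\circ\Pi\cong\Pi\circ\UU_*$ precisely enough to be sure that applying $\UU$ to the $\HySSub^\Box$-square defining the map of networks yields exactly the $\SSub^\Box$-square demanded by Definition~\ref{def:map_of_networks}, i.e.\ that $\UU(\Pi(\varphi,\Phi))$ is identified with $\Pi(\varphi,\UU\circ\Phi)$ compatibly with the canonical projections. This is routine but is the one place where ``commutes up to a unique isomorphism'' has to be handled with care.
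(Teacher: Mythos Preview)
Your proposal is correct and follows essentially the same approach as the paper: both proofs push the hybrid data forward along the product-preserving forgetful functor $\UU$, verify that this yields a map of (continuous-time) networks of open systems, and then invoke Theorem~\ref{thm:main_inL2} verbatim. The paper's proof is slightly terser about the identification $\UU(\Pi(\varphi,\Phi))=\Pi(\varphi,\UU\circ\Phi)$, justifying it in one line by universal properties, but the substance is the same as what you outline.
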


\begin{remark}
Theorem~\ref{thm:main_result}  asserts that for any choice $\{w_x\in
\Crl(\UU(\tau(x)))\}_{x\in X}$, $\{u_y\in \Crl (\UU(\mu(y)))\}_{y\in Y}$ of
open  systems so that the open system  $u_{\varphi(x)}$ is
$\UU(\Phi_x)$-related to the open system $w_x$ for all $x\in X$ we get a map
\[
f: \left(c, (\UU(\nu)^*\circ \Crl_{\UU\circ \mu}) ((u_y)_{y\in Y}) \right)\to \left(b,
(\UU(\psi)^* \circ \Crl_{\UU \circ \tau})((w_x)_{x\in X})\right)
  \]
of hybrid open systems.   In other words, compatible patterns of
interconnection of hybrid open systems give rise to maps of hybrid open
systems.  In the case where the two interconnections result in closed
systems, the map  $f$ a map of hybrid dynamical systems.
 \end{remark} 
\begin{proof}[Proof of Theorem~\ref{thm:main_result}] We apply the
  product-preserving functor $\UU:\HySSub\to \SSub$ to the data of the
  theorem.  Namely, by applying the functor $\UU$ to the 2-commuting triangle
\[
\xy
(-10, 10)*+{X} ="1"; 
(10, 10)*+{Y} ="2";
(0,-2)*+{\HySSub}="3";
{\ar@{->}_{\tau} "1";"3"};
{\ar@{->}^{\varphi} "1";"2"};
{\ar@{->}^{\mu} "2";"3"};
{\ar@{=>}_{\scriptstyle \Phi} (4,6)*{};(-0.4,4)*{}} ; 
\endxy 
\]
we get
\begin{equation}\label{eq:5.14}
\xy
(-10, 10)*+{X} ="1"; 
(10, 10)*+{Y} ="2";
(0,-2)*+{\SSub }="3";
{\ar@{->}_{\UU\circ \tau} "1";"3"};
{\ar@{->}^{\varphi} "1";"2"};
{\ar@{->}^{\UU \circ \mu} "2";"3"};
{\ar@{=>}_{\scriptstyle \UU\circ \Phi} (4,6)*{};(-0.4,4)*{}} ; 
\endxy .
\end{equation}
Applying $\UU$ to the 2-cell in the double category $\HySSub^\Box$
\[
\xy
(-12, 10)*+{\Pi(\mu)} ="1"; 
(12, 10)*+{\Pi(\tau)} ="2";
(-12,-10)*+{c }="3";
(12,-10)*+{b }="4";
{\ar@{->}^{\nu} "3";"1"};
{\ar@{->}^{\Pi (\varphi, \Phi)} "1";"2"};
{\ar@{->}_{\psi} "4";"2"};
{\ar@{->}_{f} "3";"4"};
{\ar@{=>} (0,-2)*{};(0,2)*{}} ; 
\endxy ,
\]
which is a commuting diagram in $\HySSub$, 
we get a commuting diagram
\begin{equation} \label{eq:5.15}
\xy
(-20, 10)*+{\Pi(\UU \circ \mu)} ="1"; 
(20, 10)*+{\Pi(\UU \circ \tau)} ="2";
(-20,-10)*+{\UU(c) }="3";
(20,-10)*+{\UU(b) }="4";
{\ar@{->}^{\UU(\nu)} "3";"1"};
{\ar@{->}^{\UU(\Pi (\varphi, \Phi))} "1";"2"};
{\ar@{->}_{\UU(\psi)} "4";"2"};
{\ar@{->}_{\UU(f)} "3";"4"};
\endxy 
\end{equation}
is $\SSub$.  Recall that a map of hybrid surjective submersions is an
interconnection map if and only if its image under $\UU$ is an interconnection map
between the corresponding surjective submersions.  It follows that
\eqref{eq:5.15} is  a 2-cell in the double category $\SSub^\Box$.

Since $\UU$ is product preserving we may assume that that $\UU (\Pi
(\tau)) = \Pi (\UU\circ \tau)$ and similarly for $\mu$.   Similarly,
since $\Pi (\varphi, \UU \circ \Phi): \Pi (\UU\circ \mu)\to \Pi (\UU
\circ \tau)$ is defined by the universal properties, 
\[
\Pi (\varphi, \UU\circ \Phi) = \UU (\Pi (\varphi, \Phi))
  \]
once we identify $\UU (\Pi
(\tau))$ with  $\Pi (\UU\circ \tau)$ and $\UU (\Pi (\mu)) $ with $\Pi
(\UU \circ \mu))$.  After these identifications the 2-cell
\eqref{eq:5.15} is
\begin{equation} \label{eq:5.16}
\xy
(-20, 10)*+{\Pi(\UU \circ \mu)} ="1"; 
(20, 10)*+{\Pi(\UU \circ \tau)} ="2";
(-20,-10)*+{\UU(c) }="3";
(20,-10)*+{\UU(b) }="4";
{\ar@{->}^{\UU(\nu)} "3";"1"};
{\ar@{->}^{\Pi (\varphi, \UU \circ \Phi))} "1";"2"};
{\ar@{->}_{\UU(\psi)} "4";"2"};
{\ar@{->}_{\UU(f)} "3";"4"};
{\ar@{=>} (0,-2)*{};(0,2)*{}} ; 
\endxy 
\end{equation}
The diagrams \eqref{eq:5.14} and \eqref{eq:5.16} together define a map
\[
((\varphi, \UU \circ \Phi), \UU(f)): (X\xrightarrow{\UU\circ
  \tau}\SSub, \UU(b)\xrightarrow{\UU(\psi)} \Pi (\UU\circ \tau)) \to
(Y\xrightarrow{\UU\circ \mu} \SSub, \UU(c)\xrightarrow{\UU(\nu)}\Pi
(\UU\circ \mu)))
  \]
of networks of open systems.   By Theorem~\ref{thm:main_inL2} we have
the 2-cell 
\[
\xy
(-35, 10)*+{\oplus_{y\in Y}\Crl ((\UU ( \mu(y)))} ="1"; 
(35, 10)*+{\oplus_{x\in X}\Crl(\UU (\tau(x) ))} ="2";
(-35,-10)*+{\Crl(\UU(c)) }="3";
 (35,-10)*+{\Crl(\UU(b)) }="4";
{\ar@{->}_{(\UU(\nu))^* \circ \Crl_{\UU\circ \mu} }"1";"3"};
{\ar@{->}^{\odot  (\varphi, \Crl \circ \UU \circ \Phi))} "1";"2"};
 {\ar@{->}^{(\UU(\psi))^* \circ \Crl_{\UU\circ \tau}} "2";"4"};
 {\ar@{->}_{\Crl(\UU(f))} "3";"4"};
 {\ar@{<=} (0,-2)*{};(0,2)*{}} ; 
\endxy 
\]
in the double category $\RelVect^\Box$ which is what we wanted to prove.
 \end{proof}     

\appendix
\section{Manifolds with corners} \label{app:A}

We recall the definition of a manifold with corners which is fairly
standard (see, for example, \cite{Michor}).

\begin{definition}[Manifold with corners] \label{md w corners}
Let $V$ be an arbitrary subset of $\R^n$.  A map $\varphi :V \to \R^m$
is {\sf smooth} if for every point $p$ of $V$ there exist
an open subset $\Omega$ in $\R^n$ containing $p$ and a smooth map
from $\Omega$ to $\R^m$ whose restriction to $\Omega \cap V$
coincides with $\varphi|_{\Omega \cap V}$, i.e., $\varphi$ extends
locally to a smooth map in the conventional sense.
A map $\varphi$ from the set $V$ to a {\em subset} of $\R^m$ is {\sf smooth}
if it is smooth as a map from $V$ to $\R^m$.  A map $\varphi$ from a subset $V$
of $\R^n$ to a subset $W$ of $\R^m$ is a {\sf diffeomorphism}
if $\varphi$ is a bijection and both $\varphi$  and its inverse
$\varphi\inv:W\to V$ are smooth. 

A {\sf sector} is the space $[0,\infty)^k \times \R^{n-k}$ where $n$
is a nonnegative integer and $k$ is an integer between $0$ and $n$.
We equip the sector with the topology inherited from its inclusion in
$\R^n$.  Let $X$ be a Hausdorff paracompact topological space.  A
{\sf chart} on an open subset $U$ of $X$ is a homeomorphism $\varphi$
from $U$ to an open subset $V$ of a sector.  Charts
$\varphi : U \to V$ and $\varphi' : U' \to V'$ are {\sf compatible} if
$\varphi' \circ \varphi\inv$ is a diffeomorphism from
$\varphi(U \cap U')$ to $\varphi'(U \cap U')$.  An {\sf atlas} on a
Hausdorff paracompact topological space $X$ is a set of pairwise
compatible charts whose domains cover $X$. Two atlases are
{\sf equivalent} if their union is an atlas.  A {\sf manifold with
  corners} is a Hausdorff paracompact topological space equipped
with an equivalence class of atlases.
\end{definition}

There are many incompatible notions of smooth maps between manifolds
with corners.   We use the following definition.
\begin{definition}[Smooth map]
Let $M$ and $M'$ be manifolds with corners.
A map $h:M\to M'$ is {\sf smooth}
if for every point
$p$ in $M$ there exists an open neighborhood $U$ of $p$ in  $M$
and an open neighborhood  $U'$ of  $h(p) $ in $M'$
and charts $\varphi:U \to V$ and $\varphi': U' \to V'$ 
of $M$ and $M'$ respectively 
such that $h(U) \subset U'$
and such that $\varphi' \circ h \circ \varphi\inv :V \to V'$ is smooth
in the sense of Definition~\ref{md w corners}.
\end{definition}

\begin{notation}[The category $\Man$ of manifolds with corners]\label{category:ManifoldsWithCorners}
  Manifolds with corners and smooth maps between them form an evident
  category.  We denote it by $\Man$.
\end{notation}

\begin{definition}\label{differential forms}
The {\sf tangent space} $T_xM$ of a manifold with corners $M$ at a
point $x\in M$ is the space of $\R$-valued derivations at $x$ of germs at $x$ of
smooth functions defined near $x$.
\end{definition}

Thus, the tangent space at a point $x$ of a manifold with corners $M$ is a
vector space even if the point $x$ is in the topological boundary of $M$.
Similarly the tangent bundle $TM$ of a manifold with corners $M$ is a
vector bundle over $M$.  The total spaces of $TM$ is a manifold with
corners and the canonical projection $\pi:TM\to M$ is smooth (c.f.\
\cite[p.\ 19]{Michor}).

\begin{definition}
  A {\sf vector field $X$ on a manifold with corners $M$} is a section
  of its tangent bundle $TM \to M$.  An {\sf integral curve}
  of a  vector field $X$ is a smooth map $x:I\to M$ where $I\subset
  \R$ is an
  interval, which may be open or closed, bounded or unbounded.  We
  require that 
\[
\frac{d}{dt}x =
X(x(t))
\]
for all $t\in I$.
\end{definition}

\begin{notation}
We denote the vector space of vector fields on a manifold with corners $M$ by 
$\mathscr{X}(M)$.
\end{notation}

\section{A traditional  definition of a hybrid dynamical system}
\label{app:B}
There is a variety of definitions of hybrid dynamical systems in
literature.  In this appendix we choose to present the definition that
generalizes the notion of a continuous time dynamical system.  Other
definitions, for example,  generalize labelled transition systems.
All of these definitions involve the notion of a directed
graph, which we presently recall to fix our notation.

\begin{definition}
For the purposes of this paper a {\sf graph} $A$ is a directed
multigraph.  Thus $A$ is a pair of collections $A_0$ (nodes, vertices) and $A_1$ (arrows,
  edges) together with two maps $\frs,\frt:A_1\to A_0$ (source and
  target).  We do not require that $A_1, A_0$ are sets in the sense of ZFC.

We depict an arrow $\gamma \in A_1$ with the source $a$ and target $b$
as $a\xrightarrow{\gamma}b$.  We write $A=\{A_1\toto A_0\}$ to
remind ourselves that our graph $A$ consists of two collections  and two maps.
\end{definition}

\begin{remark}
  Every category has an underlying graph: forget the composition of
  morphisms.  The the collections of objects and morphisms in a
  given category may be too big to be sets of ZFC.  Consequently the
  collections of vertices and edges in the corresponding underlying
  graph are  also too  big to be sets.   This causes no problems.
\end{remark}

The following definition of a hybrid dynamical system is a slight
variant of
 \cite[Definition~2.1]{SJLS}).  Note that in \cite{SJLS} what we call
 {\em manifolds with corners} are called {\em domains}. 
 Since in mathematics and computer science
  literature the word ``domain'' has other meanings we prefer not to
  use this term.
 Another name in hybrid literature for a ``domain'' is an
 {\em invariant}.  But in mathematics  ``invariant'' has too many
 other meanings (e.g., invariant submanifolds, invariant functions,
 invariant vectors etc.) so we prefer not to use this term either.

\begin{definition}[Hybrid dynamical system]    \label{def:hds_trad}
A {\sf hybrid dynamical system} (HDS) consists of 
\begin{enumerate}
\item A  graph $A=\{A_1\toto A_0\}$;
\item For each node $x\in A_0$ a dynamical system  $(R_x, X_x)$ where $X_x$ is a vector field on the manifold with corners $R_x$.
\item For each arrow $x\xrightarrow{\gamma}y$ of $A$  a {\sf reset } relation
  $R_\gamma: R_x \to R_y$ (i.e., $R_\gamma $ is a subset of the
  product $ R_x\times R_y$).
\end{enumerate}
Thus a hybrid dynamical system is a tuple $(A =\{A_1\toto A_0\},
\{(R_x, X_x)\}_{x\in A_0}, \{R_\gamma \}_{\gamma \in A_1}))$.
\end{definition}

\begin{remark}
A common variant of the definition of the hybrid dynamical system
insists that the relations  $R_\gamma$ are partial maps
whose domains are smooth submanifolds and the partial maps themselves
are smooth.
\end{remark}

We end the appendix with a fairly standard definition of an execution
of a hybrid dynamical system.  Executions are hybrid analogues of
integral curves of vector fields.

\begin{definition}[An execution with jump times indexed 
by the natural numbers $\N$] \label{def:chs_execution}\mbox{}\\
  Let $H= (A =\{A_1\toto A_0\}, \{(R_x, X_x)\}_{x\in A_0}, \{R_\gamma
  \}_{\gamma \in A_1}))$ be a hybrid dynamical system.  An {\sf
    execution} of $H$ is
\begin{enumerate}
\item an nondecreasing sequence $\{t_i\}_{i\geq 0}$ of real numbers
\item a function $\varphi_0:\N\to A_0$;
\item a function $\varphi_1:\N\to A_1$ compatible with $\varphi_0$: we
  require that $\frs(\varphi_1(i)) = \varphi_0 (i)$ and
  $\frt(\varphi_1 (i)) = \varphi_0 (i+1)$ (where as before $\frs,
  \frt:A_1\to A_0$ are the source and target maps, respectively);
\item an integral curve $\sigma_i: [t_{i-1}, t_{i}] \to R_{\varphi_0
    (i)}$ of $X_{\varphi_0(i)}$ 
(with $t_{-1}$ being some number less than $t_0$);
\item the terminal end point of $\sigma_i$ and the initial
  end point of $\sigma_{i+1}$ are related by the reset relation
  $R_{\varphi_1(i)}$: 
\[
(\sigma_i (t_{i}), \sigma_{i+1} (t_{i})) \in R_{\varphi_1(i)}.
\]
\end{enumerate}
\end{definition}

 \begin{remark}
 Definition~\ref{def:chs_execution} allows for executions
that make infinitely many jumps in finite time.  This is important
for modeling  the so called Zeno behavior.   
 \end{remark}

\begin{remark}
  The only property of the natural numbers $\N$ used in
  Definition~\ref{def:chs_execution} is that $\N$ is an ordered
  countable set.  One can use any other countable ordered set $X$ to
  define executions with jump times indexed by $X$. Popular choices 
  in literature of such indexing sets include $[n] := \{0<1<\ldots <
  n\}$, where $n$ is a  positive integer, and the set $\Z$ of all integers.
 \end{remark} 

\mbox{}\\[2cm]


\begin{thebibliography}{ABCD}

\bibitem[A]{Ames} A.D.\ Ames, {\em  A categorical theory of hybrid
    systems},  Diss. University of California, Berkeley, 2006.
  
  \bibitem[Aw]{Awodey}
S.\ Awodey, {\em Category theory}, 2nd edition, Oxford University Press, 2010.

\bibitem[BV]{Vigna1}
Paolo Boldi and Sebastiano Vigna,
\newblock Fibrations of graphs.
\newblock {\em Discrete Math.}, {\bf 243}(2002), 21--66.



\bibitem[Bro]{Brockett}
R.~W. Brockett,
\newblock Control theory and analytical mechanics.
\newblock In {\em The 1976 {A}mes {R}esearch {C}enter ({NASA}) {C}onference on
  {G}eometric {C}ontrol {T}heory ({M}offett {F}ield, {C}alif., 1976)}, pages
  1--48. Lie Groups: History, Frontiers and Appl., Vol. VII. Math Sci Press,
  Brookline, Mass., 1977.

\bibitem[BMM]{BMM} R.\ Bruni, J.\ Meseguer and U.\ Montanari, Symmetric monoidal and cartesian double categories as semantic framework for tile logic,
{\em Math.\ Struct.\ in Comp.\ Science} {\bf  12} (2002), no. 1, 53--90. 

\bibitem[DL1] {DL1} L. DeVille and E. Lerman, Dynamics on networks of
  manifolds, {\em SIGMA Symmetry Integrability Geom.\ Methods Appl.} {\bf
    11} (2015), Paper 022, 21 pp.; {\tt arXiv:1208.1513 [math.DS].}

\bibitem[DL2]{DL2} L. DeVille and E. Lerman, Modular dynamical systems on
  networks,  {\em J.\
  Eur.\ Math.\ Soc. (JEMS)} {\bf 17} (2015), no. 12, 2977--3013; 
{\tt arXiv:1303.3907 [math.DS]}

\bibitem[DL3]{DL3} L. DeVille and E. Lerman, Dynamics on networks I. Combinatorial categories of modular continuous-time systems, arXiv:1008.5359 [math.DS],
\url{arXiv.org/abs/1008.5359} 

\bibitem[F]{Field} M.\ Field, Combinatorial dynamics, {\em Dynamical Systems} {\bf 19}(2004), no. 3 : 217--243.

\bibitem[G]{goebelHybrid} R.\ Goebel and R.\ Sanfelice and A.\ Teel, \textit{Hybrid Dynamical Systems}, Princeton University Press, 2012.


\bibitem[GST]{Golubitsky.Stewart.Torok.05}
Martin Golubitsky, Ian Stewart, and Andrei T{\"o}r{\"o}k.
\newblock Patterns of synchrony in coupled cell networks with multiple arrows.
\newblock {\em SIAM J. Appl. Dyn. Syst.}, 4(1):78--100 (electronic), 2005.

\bibitem[GS]{Golubitsky.Stewart.06}
M.\ Golubitsky and I.\ Stewart.
\newblock Nonlinear dynamics of networks: the groupoid formalism.
\newblock {\em Bull. Amer. Math. Soc. (N.S.)}, 43(3):305--364, 2006.

 \bibitem[HTP]{HTP} E.\ Haghverdi, P.\ Tabuada and G.J.\ Pappas, Bisimulation relations for dynamical, control, and hybrid systems, {\em Theoretical Computer Science}{\bf 342} (2002), 229 -- 261.



\bibitem[L1]{L1} E.\ Lerman, A category of hybrid systems, \url{https://arxiv.org/abs/1612.01950}.

\bibitem[L2]{L2} E.\ Lerman, Networks of open systems, {\em J.\ Geometry and Physics}, {\bf 130} (2018), 81--112, \url{ https://doi.org/10.1016/j.geomphys.2018.03.020}, \url{ https://arxiv.org/abs/1705.04814}.

\bibitem[Lib]{liberzonSwitch} D.\ Liberzon, {\em Switching in Systems and Control}, Birkh\"auser, 2003. 

\bibitem[LS]{LS}  H.\ Lorenco and  A.\ Sernadas, An institution of hybrid systems, in: D. Bert, C. Choppy (Eds.), Recent Trends in
Algebraic Development Techniques, Lecture Notes in Computer Science, Vol. 1827, Springer, Berlin, 2000,
pp. 219--236.


\bibitem[Mi]{Michor} P.\ Michor, {\em Manifolds of differentiable mappings},
Shiva Mathematics Series, 3. Shiva Publishing Ltd., Nantwich, 1980. iv+158. 
 
\bibitem[R]{riehl} E. Riehl, {\em Category Theory in Context}, Dover Publisher, 2014.

\bibitem[RS]{RupSp} D.\ Rupel and D.\ I.\
 Spivak, The operad of
  temporal wiring diagrams: formalizing a graphical language for
  discrete-time processes, arXiv:1307.6894 [math.CT],
  \url{http://arxiv.org/abs/1307.6894}

\bibitem[Sch]{Schmidt} J.\ Schmidt, 
Morphisms of Networks of Hybrid Open Systems
  {\tt arXiv:1911.09048 [math.DS]}
  \url{https://arxiv.org/abs/1911.09048}

\bibitem[SS]{SS} P. Schultz and D. Spivak, {\em Temporal Type Theory},
  Springer Publisher, 2019.
  
\bibitem[Sh1]{Shul} M.A.\ Shulman,  Framed bicategories and monoidal fibrations,
{\em Theory Appl.\ Categ.} {\bf 20} (2008), No.\ 18, 650--738.

\bibitem[Sh2]{Shulman10} M.A.\ Shulman, Constructing symmetric
  monoidal bicategories, {\tt arXiv:1004.0993 [math.CT]},
  \url{http://arxiv.org/abs/1004.0993}


\bibitem[SJLS]{SJLS}
S.~Simi{\'c}, K.~Johansson, S.~Sastry, and J.~Lygeros.
\newblock Towards a geometric theory of hybrid systems.
\newblock {\em Dynamics of continuous discrete and impulsive systems, Series B},
  12(5/6):649--687, 2005.
 

\bibitem[Sp]{Spivak} D.\ I. Spivak, The operad of wiring diagrams:
  formalizing a graphical language for databases, recursion, and
  plug-and-play circuits, arXiv:1305.0297 [cs.DB],
  \url{http://arxiv.org/abs/1305.0297}

\bibitem[SGP]{Stewart.Golubitsky.Pivato.03}
I.\ Stewart, M.\ Golubitsky and M.\ Pivato,
 Symmetry groupoids and patterns of synchrony in coupled cell
  networks,
\newblock {\em SIAM J. Appl. Dyn. Syst.}, 2(4):609--646 (electronic), 2003.


\bibitem[TP]{TP} P.\ Tabuada 
and G.\ Pappas, Quotients of fully nonlinear control systems, {\em SIAM J.\ Control Optim.}, {\bf 43} (2005),  No.\ 5, pp. 1844--1866.

\bibitem[TPL]{TPL} P.\ Tabuada, G.\ Pappas and P.\ Lima, Compositional
  Abstractions of Hybrid Control Systems, {\em Discrete Event Dynamic
    Systems: Theory and Applications }(2004) {\bf  14}, 203--238.
  
\bibitem[VSL]{VSL} D.\ Vagner, D.I.\ Spivak and E.\ Lerman, Algebras
  of Open Dynamical Systems on the Operad of Wiring Diagrams, {\em Theory \& Applications of Categories}  (2015) Vol.\ 30 Issue 51, p1793--1822.  
arXiv:1408.1598 [math.CT]
  \url{http://arxiv.org/abs/1408.1598}
  
  

\bibitem[W]{Willems} J.C.\ Willems, The behavioral approach to open
  and interconnected systems: Modeling by tearing, zooming, and
  linking. {\em Control Systems Magazine}, {\bf 27} (2007), 46--99.

\bibitem[WN]{WN}
W. Winskel and M. Nielsen, Models for Concurrency,  BRICS Report
Series, 1(12) (1994). \url{https://doi.org/10.7146/brics.v1i12.21658}
\end{thebibliography}
\end{document}